\let\oldmultinamedelim\multinamedelim
\let\oldfinalnamedelim\finalnamedelim
\RenewDocumentCommand{\multinamedelim}{}{--}
\RenewDocumentCommand{\finalnamedelim}{}{--}
  \RenewDocumentCommand{\multinamedelim}{}{\oldmultinamedelim}
  \RenewDocumentCommand{\finalnamedelim}{}{\oldfinalnamedelim}
\numberwithin{equation}{section}
\declaretheorem[sibling = equation]{theorem}
\declaretheorem[sibling = equation]{proposition}
\declaretheorem[sibling = equation]{corollary}
\declaretheorem[sibling = equation]{lemma}
\declaretheorem[sibling = equation, style = definition]{definition}
\declaretheorem[sibling = equation, style = definition]{convention}
\declaretheorem[sibling = equation, style = definition]{notation}
\declaretheorem[sibling = equation, style = remark]{example}
\declaretheorem[sibling = equation, style = remark]{remark}
\declaretheorem[name = Theorem]{theoremIntro}
\declaretheorem[numbered = no, name = Remark, style = remark]{remarkIntro}
\DeclareMathOperator{\id}{id}
\DeclareMathOperator{\pr}{pr}
\DeclareMathOperator{\sgn}{sgn}
\DeclareMathOperator{\Hom}{Hom}
\NewDocumentCommand{\opP}{}{\mathcal{P}}
\NewDocumentCommand{\opQ}{}{\mathcal{Q}}
\NewDocumentCommand{\C}{}{\mathcal{C}}
\NewDocumentCommand{\SC}{}{\mathcal{SC}}
\NewDocumentCommand{\vor}{}{\mathrm{vor}}
\NewDocumentCommand{\SCv}{}{\SC^{\vor}}
\NewDocumentCommand{\Ass}{}{\mathcal{A}\mathrm{ss}}
\NewDocumentCommand{\Top}{}{\mathsf{Top}}
\NewDocumentCommand{\Ch}{}{\mathsf{Ch}}
\NewDocumentCommand{\colc}{}{\mathfrak{c}}
\NewDocumentCommand{\colo}{}{\mathfrak{o}}
\NewDocumentCommand{\Q}{}{\mathbb{Q}}
\NewDocumentCommand{\K}{}{\mathbb{K}}
\NewDocumentCommand{\cK}{}{\mathcal{K}}
\RenewDocumentCommand{\S}{}{\mathbb{S}}
\NewDocumentCommand{\OurOperators}{m s t- o m}{{
\IfBooleanTF{#2}{\hat{#1}}{
  \IfBooleanTF{#3}{\bar{#1}}{
    #1}}
^{\ifx#5Q \mathcal{Q} \else #5 \fi}
\IfValueT{#4}{_{#4}}
}}
\def\pro{\OurOperators{\mu}}
\def\inc{\OurOperators{\iota}}
\def\act{\OurOperators{\alpha}}
\def\push{\OurOperators{\beta}}
\def\loo{\OurOperators{\ell}}
\def\spec{\OurOperators{\eta}}
\def\gam{\OurOperators{\gamma}}
\title{Non-formality of Voronov's Swiss-Cheese operads}
\author{Najib Idrissi\thanks{Université Paris Cité and Sorbonne Université, CNRS, IMJ-PRG, F-75013 Paris, France.} \and Renato Vasconcellos Vieira\thanks{Universidade de São Paulo, ICMC, São Carlos, Brasil.}}
\date{September 2023}
\begin{document}

\maketitle

\begin{abstract}
  The Swiss-Cheese operads, which encode actions of algebras over the little $n$-cubes operad on algebras over the little $(n-1)$-cubes operad, comes in several variants.
  We prove that the variant in which open operations must have at least one open input is not formal in characteristic zero.
  This is slightly stronger than earlier results of Livernet and Willwacher.
  The obstruction to formality that we find lies in arity $(2, 2^n)$, rather than $(2, 0)$ (Livernet) or $(4, 0)$ (Willwacher).
\end{abstract}

\tableofcontents

\addsec{Introduction}

The little cubes operads $\C_n$ (for $n \geq 1$), introduced by \textcite{BoardmanVogt1968,May1972}, are topological operads that encode strongly homotopy commutative (up to degree $n$) algebras.
The space $\C_n(r)$ is composed of configurations of $r$ rectilinear $n$-cubes embedded in the unit $n$-cube, with pairwise disjoint interiors.

To prove his celebrated formality theorem, \textcite{Kontsevich1999,Kontsevich2003} used the Swiss-Cheese operads $\SC_{n+1}$ of \textcite{Voronov1999}, which encode central actions of $\C_{n+1}$-algebras on $\C_n$-algebras.
The space $\SC_{n+1}(r,s) \subseteq \C_{n+1}(r+s)$ consists of configurations of rectilinear cubes of two different kinds: $r$ ``open'' ones, whose bottom faces must be included in the bottom face of the ambient cube; and $s$ ``closed'' ones, which have no requirements.
This operad was e.g. used to define the generalized Hochschild complex of a $\C_n$-algebra for $n \geq 2$~\cite[Definition~9]{Kontsevich1999}.

A fundamental property of $\C_n$ is its formality, which is a notion originating from rational homotopy theory~\cite{Sullivan1977}.
Briefly, an operad $\opP$ is called formal over a field $\K$ if the dg-operad $C_*(\opP; \K)$ is quasi-isomorphic to its homology $H_*(\opP; \K)$.
It has been shown using several different methods~\cite{Kontsevich2003,Tamarkin2003,Petersen2014,FresseWillwacher2015,BritoHorel2019} that $\C_n$ is formal over $\Q$ for any $n \geq 2$.
In constrast, $\C_n$ is not formal as a (symmetric) operad over $\mathbb{F}_p$ for any prime $p$ and $n \geq 2$~\cite[Remark~6.9]{CiriciHorel2018}.
It is also not formal over $\mathbb{F}_2$ as a non-symmetric operad~\cite{Salvatore2018}.

The question of formality for the Swiss-Cheese operads is subtler.
\textcite{Voronov1999} originally defined an operad $\SCv_{n+1} \subseteq \SC_{n+1}$ such that $\SCv_{n+1}(0, s)$ is empty for all $s \geq 0$, i.e., open operations must have open inputs.
Today, the Swiss-Cheese operad $\SC_{n+1}$ commonly allows such operations, with $\SC_{n+1}(0, s) = \C_{n+1}(s)$.
Roughly speaking, if $\SCv_{n+1}$ encodes a central action $A \otimes B \to B$ of a $\C_{n+1}$-algebra $A$ on a $\C_{n-1}$-algebra $B$, the larger operad $\SC_{n+1}$ encodes a central morphism $f:A \to B$~\cite{HoefelLivernet2012}.
Given a central morphism, the action is given by $a \cdot b \coloneqq f(a) b$.
In the presence of units, $f$ can be recovered from the action by $f(a) \coloneqq a \cdot 1_B$.
The two notions are not equivalent in general.

\textcite{Livernet2015} proved, using the theory of operadic Massey products, that $\SC_{n+1}$ is not formal over any field of characteristic different from $2$.
\textcite{Willwacher2017a} proved that the non-formality of $\SC_{n+1}$ is equivalent to the relative non-formality of the standard inclusion of operads $\C_{n-1} \to \C_{n}$~\cite{TurchinWillwacher2018}.
However, both proofs use the elements of the space $\SC_{n+1}(0, s)$ in an essential way and they thus cannot be applied to $\SCv_{n+1}$.
Non-formality of $\SC_{n+1}$ does not imply non-formality of its suboperad $\SCv_{n+1}$.
We close this gap:

\begin{theoremIntro}
  \label{thmA}
  Voronov's Swiss-Cheese operad $\SCv_{n+1}$ is not formal over any field of characteristic different from $2$ for any $n \geq 1$.
\end{theoremIntro}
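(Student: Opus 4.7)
The natural strategy is to adapt Livernet's obstruction theory via operadic Massey products, but arranged so that every open-output operation appearing in the computation has at least one open input (so that nothing leaves the suboperad $\SCv_n \subseteq \SC_n$). Concretely, the first step is to record an explicit presentation of the graded operad $H_*(\SCv_n;\K)$: it is generated by an open product, an open bracket of degree $n-2$, the closed Poisson pair $(\mu_c, \lambda_c)$ with $\lambda_c$ of degree $n-1$, and an action class $\alpha \in H_0(\SCv_n(1,1))$, subject to the usual compatibility relations. This presentation is essentially known, but writing it down fixes the target operad in which I will detect a Massey-type obstruction.

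The second step is to single out a candidate higher product whose indeterminacy avoids the class I want to detect. The fact that the obstruction lies in arity $(2,2^n)$ strongly suggests that the argument iterates the closed bracket $\lambda_c$ in a binary-tree pattern of depth $n$: at each depth the closed bracket doubles the number of closed leaves, accounting for the exponent $2^n$. Pairing this iterated closed tree with an open product on two open inputs via the action $\alpha$ produces a candidate Massey bracket of length roughly $n+1$. Because $\lambda_c$ has odd degree $n-1$, the relevant pairwise products can be forced to vanish at each stage, so such a higher Massey product is in principle defined; the characteristic-$\neq 2$ hypothesis should enter exactly at the (anti)symmetry check that sets up these cancellations.

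The third step is to realise the Massey product at the chain level and show it represents a non-trivial class, not cancelled by its indeterminacy. The plan here is to use a chain model for $\SCv_n$ in which the action $\alpha$ and the closed bracket $\lambda_c$ admit concrete cycle-representatives that can be composed explicitly; a semi-algebraic or configuration-space model seems best suited. If the class produced is non-zero in $H_*(\SCv_n(2,2^n);\K)$ and one checks that its image in the indeterminacy module is zero, then formality would force a contradiction, since a formal operad makes all such higher Massey products vanish modulo indeterminacy.

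The main obstacle is exactly the step that distinguishes $\SCv_n$ from $\SC_n$: the cancellations that define the Massey product must be implemented entirely by open-output operations with at least one open input, because the space $\SCv_n(0,s)$ is empty and the usual Livernet/Willwacher trick of absorbing closed operations into an open-output operation without open inputs is unavailable. Managing the combinatorics so that the $2^n$ closed leaves only ever feed, through the action $\alpha$, into genuine open-output operations with at least one open input — and simultaneously arranging the null-homotopies witnessing the Massey product to live in this restricted subspace — is the technical heart of the argument and is presumably what forces the unusual arity $(2,2^n)$. I would expect to control this by induction on $n$, using the $n{-}1$ case to produce the null-homotopies needed at depth $n$.
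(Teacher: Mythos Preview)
Your overall instinct---produce a Massey-type obstruction \`a la Livernet, arranged so that every open-output operation retains at least one open input, and proceed by induction on $n$---matches the paper's strategy. But the specific combinatorial picture you propose is backwards, and this is not a detail: it is exactly where the difficulty of the $\SCv_n$ case (as opposed to $\SC_n$) lives.

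You conjecture that the $2^n$ in the arity $(2,2^n)$ counts \emph{closed} leaves, produced by iterating the closed bracket $\lambda_c$ in a binary tree of depth $n$, and that the two remaining inputs are open. In fact the roles are reversed: there are exactly \emph{two} closed inputs, and the closed bracket enters only once, as the fundamental class $\ell$ generating $H_{n-1}(\C_n(2))$, just as in Livernet's original argument. The exponential count is of \emph{open} inputs, and it arises from iterating the degree-zero open \emph{product} $\mu$, not the closed bracket. The reason is precisely the constraint you flag at the end of your plan: since $\SCv_n(0,s)=\emptyset$, every open-output operation must keep at least one open input, so each time one composes along an open slot one must feed in something that itself has open inputs. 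The cheapest such thing is the binary product $\mu$, and the inductive passage from dimension $n-1$ to $n$ doubles the number of open inputs.

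Consequently the obstruction is not a ``length $n{+}1$'' Massey product built from repeated vanishings of $\lambda_c$-compositions (note also that iterating the degree-$(n{-}1)$ class $\lambda_c$ would land in the wrong degree). It is, morally, still a \emph{triple} product---as in the $\SC_n$ case---but the null-homotopies witnessing the two relevant relations are now $n$-dimensional chains rather than paths. The paper makes this concrete: for each corner $\star\in\{+,-\}^n$ it writes down an explicit cubical $n$-chain $\beta^n_\star$ in $\SCv_{n+1}(1,\pm^n)$ that pushes the single closed cube from the centre toward the corner $\star$, interpolating between $\alpha\circ_{\colo}\mu^n$ and $\mu^n\circ_\star\alpha$. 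Gluing these (together with lower-dimensional pieces indexed by pairs of disjoint subsets $S,T\subset\underline n$) produces a closed decomposable chain $\eta^n\in C_n(\SCv_{n+1}(2,\pm^n))$ that is shown, by an explicit homotopy, to be homologous to the indecomposable generator $\alpha(\ell^n,\mu^n)$. The contradiction with formality then runs exactly as in Livernet's argument: lifting $\mu,\alpha,\ell$ through a putative zigzag and comparing the resulting coefficients forces $1=0$.

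Two further practical points. First, the paper works throughout with \emph{cubical} singular chains (with connections), not simplicial ones: the $n$-cubes $\beta^n_\star$ and the gluings are literally maps from $[-1,1]^n$, which makes the face-matching bookkeeping tractable. Second, the induction you anticipate is present, but it builds the $(n{-}1)$-sphere as a suspension of the $(n{-}2)$-sphere from the previous dimension by composing with one more copy of $\mu$ and one more ``push'' path---it does not lengthen a Massey bracket. If you revise your plan, swap the roles of open and closed, drop the iterated-$\lambda_c$ picture, and focus on constructing the $\beta^n_\star$'s explicitly; that is where the real work is.
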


\begin{remarkIntro}
  The case $n = 1$ of Theorem~\ref{thmA} was proved in an appendix of the second-named author's PhD thesis~\cite{Vieira2018a}.
\end{remarkIntro}

\begin{remarkIntro}
  If $n = 0$, then $\SC_1$ is formal over any ring.
  We give a more detailed explanation in Section~\ref{sec:formality nonformality}.
\end{remarkIntro}

The homotopy type of the Swiss-Cheese operad is of interest in deformation quantization (see \textcite{Kontsevich1999}).
To carry out such applications, it is important to obtain small combinatorial models (in the sense of rational homotopy theory) for the operads involved.
Our non-formality result implies that, even in the case of Voronov's Swiss-Cheese operad, such models must have a nontrivial differential.
Some models are known for the full Swiss-Cheese operad $\SC_{n+1}$~\cite{Willwacher2015a,Idrissi2017} and they can be truncated to give a model of $\SCv_{n+1}$.
However, it is conceivable that models of $\SCv_{n+1}$ that are simpler than these truncated models exist.
Our result (and the explicit bound on the arity of the obstruction, see below) shows that such models cannot simpler starting at arity $(2, 2^n)$.

\paragraph{Proof strategy}

Our proof is similar in spirit to that of \textcite{Livernet2015}, which uses the theory of Massey products.
These encode the idea that, when a product of three cohomology classes $x y z$ is zero in two different ways because $xy = yz = 0$, one may define some new class $\langle x, y, z \rangle$, and it constitutes an obstruction to formality if nonzero.
This theory has been extended by Livernet to operads.

\begin{figure}[htbp]
  \centering
  \includegraphics[width=\linewidth]{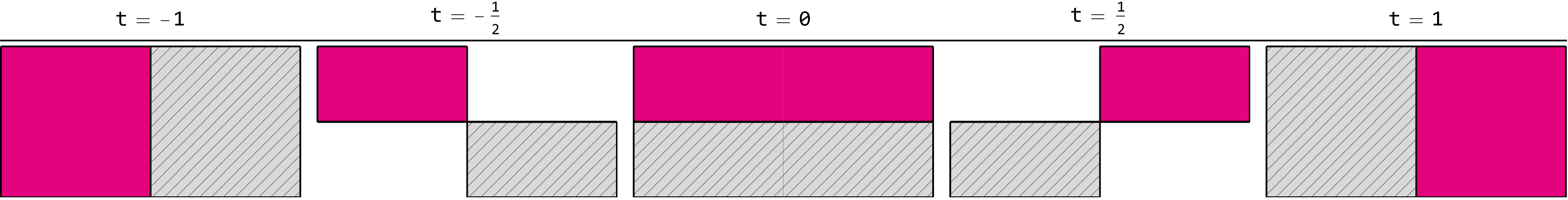}
  \caption{The path used in \cite{Livernet2015} for $n=2$. The hatched square is an open input.}
  \label{fig:livernet-proof}
\end{figure}

Livernet then applied this criterion to $H_*(\SC_{n+1})$.
Figure~\ref{fig:livernet-proof} illustrates that in an $H_*(\SC_2)$-algebra $(A, B, f)$ where $A$ is an $H_*(\C_2)$-algebra, $B$ is an $H_*(\C_1)$-algebra, and $f : A \to B$ is a morphism, the map $A \otimes B \to B, \, a \otimes b \mapsto f(a) \cdot b$ is equal to $a \otimes b \to b \cdot f(a)$.
It thus follows that the map $a_1 \otimes a_2 \mapsto f(a) f(b) - f(b) f(a)$ vanishes in two different ways.
The homotopies that witness this vanishing glue to give a nontrivial homology class in $H_1(\SC_2(2, 0))$, i.e., a nonzero Massey product.
The proof for higher $n$ is similar (see \cite[Section~4]{Livernet2015}).
Our goal, in this paper, is to construct something analogous in the chain complex of $\SCv_{n+1}$.

The main hurdle we clear, in this paper, is the combinatorial difficulty of the proof.
The path illustrated in Figure~\ref{fig:livernet-proof} uses four affine paths to construct a half-circle in $\SC_2(2,0)$.
More generally, in $\SC_{n+1}(2,0)$, the nontrivial $(n-1)$-sphere is constructed out of two hemispheres, which are in turn constructed by gluing just four $(n-1)$-cubes.
However, in $\SCv_{n+1}$, we cannot use such simple paths: all the operations that we use must have at least one open input, and as we compose them together, the number of inputs grows quickly.
In order to handle the induced complexity, we work by induction, starting with the proof of the non-formality of $\SCv_2$ (initially found in~\cite{Vieira2018a}) which involves constructing a half-circle out of $2 \times 4$ affine paths (see Figure~\ref{fig:eta}, in which each path of the type $\push[\pm]{2}$ is already a concatenation of two paths).
Then, in a given dimension $n$, we build an $(n-1)$-sphere $\S^{n-1} \subset \SCv_{n+1}(2, 2^n)$ out of the $(n-2)$-chains constructed in $\SCv_{n-1}(2, 2^{n-1})$ by composing them with affine paths.
This allows us to express the $(n-1)$-sphere $\S^{n-1}$ as the suspension of $\S^{n-2}$.

The proof of \textcite{Livernet2015} shows that there is an obstruction to formality in $\SC_{n+1}(2, 0)$ for all $n$, while that of \textcite{Willwacher2017a} finds one in $\SC_{n+1}(4, 0)$ for all $n$.
With our methods, the obstruction to formality lives in $\SCv_{n+1}(2, 2^n)$.
It would be an interesting question to determine if an obstruction can be found in a lower-arity component of Voronov's Swiss-Cheese operad, or alternatively to question if some arity-truncation of $\SCv_{n+1}$ is formal.

In addition to the above combinatorial difficulty of the proof, we found that using cubical chains (rather than the more commonly used simplicial chains) was beneficial.
This leads to a couple of interesting questions about the homotopy theory of operads in cubical $\omega$-groupoids, detailed in Section~\ref{sec background}.

\paragraph{Acknowledgments}

N.I.\ thanks Muriel Livernet and Thomas Willwacher for helpful discussions.
N.I.\ was supported by the project HighAGT (ANR-20-CE40-0016), the project SHoCoS (ANR-22-CE40-0008), and the IdEx Université Paris Cité (ANR-18-IDEX-0001).

R.V.V.\ was financed by the grant 2020/06159-5, São Paulo Research Foundation (FAPESP).

The authors thank the Réseau Franco-Brésilien de Mathématiques (GDRI-RFBM) for their mobility grant.

\section{Background, conventions, and notation}
\label{sec background}

Let us begin by recalling some background necessary for the statement and the proof of our theorem.
While we claim no originality in this section, we advise the reader that we introduce several notation that will be used throughout the text.

Given an integer $n \geq 0$, let $[n] \coloneqq \{0, \dots, n\}$ and $\underline{n} \coloneqq \{1, \dots, n\}$.
We work in $\Top$, the category of topological spaces and continuous maps, and $\Ch(\K)$ the category of nonnegatively graded chain complexes over some fixed field $\K$ of characteristic different from $2$.

\subsection{Cubical homotopy theory}
\label{sec cubical}

We will be working extensively with little cubes operads (see Section~\ref{sec cubes operads}).
The homology classes that we will construct will thus be easier to describe using \emph{cubical} chains, rather than using the more common simplicial chains.
We recall certain features (such as connections) that are lesser known.
See~\cite{BrownHigginsSivera2011} for a general reference.

\begin{remark}
  Instead of using the standard unit $k$-cube $[0,1]^k$, we will use the $k$-cube $I = [-1, 1]^k$, as this will make all of our formulas much simpler.
  This, of course, makes no material difference in what follows.
\end{remark}

\begin{definition}
  Given a topological space $X$, denote $K_k X = \operatorname{Map} \bigl( [-1, 1]^k, X \bigr)$ and let $C'_k X$ be the $\K$-space spanned by $K_k X$.
  Given a basis element $\sigma \in K_k X$ and integers $i \in \underline{k}$, $j \in \underline{k+1}$, the faces $d_i^+ \sigma, d_i^- \sigma \in C'_{k-1} X$ and degeneracies $s_j \sigma \in C'_{k+1} X$ are given by:
  \begin{align}
    (d_i^+ \sigma)(t_1, \dots, t_{k-1}) & = \sigma(t_1, \dots, t_{i-1}, 1, t_i, \dots, t_k); \\
    (d_i^-\sigma)(t_1, \dots, t_{k-1})  & = \sigma(t_1, \dots, t_{i-1}, -1, t_i, \dots, t_k); \\
    (s_j\sigma)(t_1, \dots, t_{k+1})    & = \sigma(t_1, \dots, t_{j-1}, t_{j+1}, \dots, t_{k+1}).
  \end{align}
\end{definition}

\begin{remark}
  The above operations satisfy obvious identities that are reminiscent of simplicial identities.
  There is a general notion of \emph{cubical object}, which specializes to cubical sets, cubical $\K$-spaces (the notion defined above), cubical topological spaces, and so on.
  The collection $\{K_\bullet X\}$ forms a cubical set, while $C'_\bullet X$ form a cubical $\K$-space.
\end{remark}

The faces of a cube are often depicted pictorially as follows, for $k = 1$ or $k=2$:
\begin{equation*}
  \begin{tikzcd}[arrows={dash}]
    d_1^-\sigma \ar[r, "\sigma \in C_1 X"] & d_1^+\sigma
  \end{tikzcd}
  \quad
  \begin{tikzcd}[arrows={dash}]
    d_1^+ d_1^- \sigma \ar[r, "d_2^+ \sigma"]
    & d_1^+ d_1^+ \sigma
    \\
    d_1^- d_1^- \sigma \ar[u, "d_1^-\sigma"] \ar[r, "d_2^-\sigma" swap]
    \ar[ur, phantom, "\sigma \in C_2 X"]
    & d_1^- d_1^+ \sigma \ar[u, "d_1^+ \sigma" swap]
  \end{tikzcd}
\end{equation*}

\begin{definition}
  Let $C_k X$ be the quotient of $C'_k X$ by degenerate cubes.
  The differential $d : C_k X \to C_{k-1} X$ is given by the signed sum of faces, $d = \sum_{i=1}^k (-1)^i (d_i^- - d_i^+)$.
  We thus obtain a complex, $(C_* X, d)$, called the \emph{cubical singular chain complex} of $X$.
  The homology of this complex is denoted by $H_* X = \{ H_k X \}_{k \geq 0}$, which we can also view as a chain complex with vanishing differential.
\end{definition}

Cubical singular homology coindices with simplicial singular homology~\cite{EilenbergMacLane1953}.
Cubical homology has a great advantage over simplicial homology: the product of two cubes is a cube.
This implies, for example, that the cubical Eilenberg--Zilber map is an isomorphism, rather than merely a homotopy equivalence.
However, the Dold--Kan equivalence between chain complexes and cubical abelian groups is missing~\cite[Remark~14.8.3]{BrownHigginsSivera2011}.

To recover the equivalence, one has to consider instead cubical abelian groups equipped with \emph{connections}, which generalize degeneracies (see \cite[Section 13.1]{BrownHigginsSivera2011}).
A degenerate $k$-cube $s_j \sigma$ can be thought of as a ``thin cube'' which is constant in the direction of the $j$th coordinate.
Connections provide other kinds of thin cubes.
See Figure~\ref{fig thin squares} for examples of thin squares obtained from a segment.

\begin{figure}[htbp]
  \centering
  \subcaptionbox{Vertical thin square.}[.32\linewidth]{
    \begin{tikzcd}[arrows={dash}, ampersand replacement=\&]
      \sigma(1) \ar[r, dashed, "\sigma(1)"]
      \& \sigma(1)
      \\
      \sigma(-1) \ar[u, "\sigma"] \ar[r, dashed, "\sigma(-1)" swap]
      \ar[ur, phantom, "s_1 \sigma"]
      \& \sigma(-1) \ar[u, "\sigma" swap]
    \end{tikzcd}
  }
  \subcaptionbox{Horizontal thin square.}[.32\linewidth]{
    \begin{tikzcd}[arrows={dash}, ampersand replacement=\&]
      \sigma(-1) \ar[r, "\sigma"]
      \& \sigma(1)
      \\
      \sigma(-1) \ar[u, dashed, "\sigma(-1)"] \ar[r, "\sigma" swap]
      \ar[ur, phantom, "s_2 \sigma"]
      \& \sigma(1) \ar[u, dashed, "\sigma(1)" swap]
    \end{tikzcd}
  }
  \subcaptionbox{L-shaped thin square.}[.32\linewidth]{
    \begin{tikzcd}[arrows={dash}, ampersand replacement=\&]
      \sigma(1) \ar[r, dashed, "\sigma(1)"]
      \& \sigma(1)
      \\
      \sigma(-1) \ar[u, "\sigma"] \ar[r, "\sigma" swap]
      \ar[ur, phantom, "\Gamma_1 \sigma"]
      \& \sigma(1) \ar[u, dashed, "\sigma(1)" swap]
    \end{tikzcd}
  }
  \caption{Examples of thin squares obtained from a one-dimensional segment.}
  \label{fig thin squares}
\end{figure}


\begin{definition}[{\cite[Definition 13.1.3]{BrownHigginsSivera2011}}]
  A cubical object \emph{with connections} is a cubical object $K_\bullet$ equipped with morphisms $\Gamma_j : K_{k-1} \to K_k$ satisfying natural identities.
\end{definition}

\begin{example}
  Given a space $X$, the cubical set $K_\bullet X$ has connections given by (for $\sigma : [-1, 1]^k \to X$ and $j \in \underline{k}$):
  \begin{equation}
    (\Gamma_j \sigma)(t_1, \dots, t_{k+1}) \coloneqq \sigma(t_1, \dots, t_{j-1}, \max(t_j, t_{j+1}), t_{j+2}, \dots, t_{k+1}).
  \end{equation}
\end{example}

We will not list every identity here, as they all come from the dual ones satisfied by the prototypical example $[-1, 1]^\bullet$.
The most important ones for us are:
\begin{equation}
  d_j^- \Gamma_j \sigma = d_j^- \Gamma_{j+1} \sigma = \sigma;
  \;
  d_j^+ \Gamma_j \sigma = d_j^+ \Gamma_{j+1} \sigma = s_j d_j^+ \sigma;
  \;
  \Gamma_j s_j \sigma = s_j^2 \sigma = s_{j+1} s_j \sigma.
\end{equation}

The last piece of data we will need is that of compositions.

\begin{definition}[{\cite[Definitions 13.1.7, 13.2.1]{BrownHigginsSivera2011}}]
  A cubical $\omega$-groupoid is a cubical object with connections $K_\bullet$ equipped with partial compositions ${+_i} : K_n \times K_n \to K_n$ and inversions ${-_i} : K_n \to K_n$ such that $x +_i y$ is defined if $d_i^+ x = d_i^- y$, which satisfy several compatibility axioms, and such that each $(K_n, {+_i}, {-_i})$ defines a groupoid (with source and target maps $s_i d_i^+$ and $s_i d_i^-$).
\end{definition}

\begin{example}[Prototypical example]
  The cubical set $K_\bullet X$ of a space $X$ has compositions and inversions given by (for $d_i^+ a = d_i^- b$):
  \begin{align}
    (a +_i b)(t_1, \dots, t_n) & \coloneqq
    \begin{cases}
      a(t_1, \dots, 2t_i+1, \dots, t_n), & t_i \leq 0; \\
      b(t_1, \dots, 2t_i-1, \dots, t_n), & t_i \geq 0; \\
    \end{cases}
    \\
    ({-_i} a)(t_1, \dots, t_n) & \coloneqq a(t_1, \dots, -t_i, \dots, t_n).
  \end{align}
  However, it does not define a strict groupoid, as $a +_i ({-_i} a)$ is not equal to the unit.
\end{example}

\begin{remark}[Consequence of {\cite*[Lemma 14.8.2]{BrownHigginsSivera2011}}]
  In an abelian category (e.g. $\K$-spaces), a cubical object $K_\bullet$ with connections is canonically an $\omega$-groupoid, where (for $a,b \in K_n$ such that $d_i^+ a = d_i^- b \eqqcolon x$):
  \begin{align}
    a +_i b & \coloneqq a - s_i x + b, &
    {-_i} a & \coloneqq s_i x - a.
  \end{align}
\end{remark}

\begin{theorem}[Immediate consequence of {\cite[Theorem~14.8.1]{BrownHigginsSivera2011}}]\label{thm dold-kan}
  There is an equivalence of categories between chain complexes and cubical $\omega$-groupoids in $\K$-spaces.
  The equivalence $\cK : \Ch(\K) \leftrightarrows \omega \mathsf{Gpd}_{\K} : N$ acts on objects by:
  \begin{align}
    (\cK C)_\bullet & \coloneqq \Hom_{\Ch(\K)} \bigl( C_*([-1, 1]^\bullet), C \bigr), \\
    (N K)_*         & \coloneqq \biggl( (NK)_n =  K_n / \sum_{i=1}^n s_i(K_{n-1}), \; d = \sum_{i=1}^n (-1)^i (d_i^- - d_i^+) \biggr).
  \end{align}
\end{theorem}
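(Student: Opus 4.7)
The plan is to leverage the remark immediately preceding the statement: in an abelian category, a cubical object with connections carries a canonical compatible $\omega$-groupoid structure via $a +_i b = a - s_i x + b$ (where $x = d_i^+ a = d_i^- b$) and ${-_i} a = s_i x - a$. This means the forgetful functor from cubical $\omega$-groupoids in $\K$-spaces to cubical $\K$-spaces with connections is an isomorphism of categories. It therefore suffices to produce a Dold--Kan-type equivalence between $\Ch(\K)$ and cubical $\K$-spaces with connections; the functors $\cK$ and $N$ in the statement are then directly adapted from the classical simplicial ones.

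First, I would verify that the functors are well-defined. The cellular cubical chain complex of the standard cube, $C_*([-1,1]^\bullet)$, is itself a cubical object in $\Ch(\K)$ with connections induced from the Cartesian structure on cubes, so the $\Hom$-space defining $\cK C$ inherits all faces, degeneracies, and connections, and hence (by the preceding remark) an $\omega$-groupoid structure. For $N$, the standard cubical identities imply that $d = \sum_{i=1}^n (-1)^i(d_i^- - d_i^+)$ descends to the quotient by the degenerate subspace and squares to zero there.

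Next, I would construct the natural unit and counit. The unit $\eta : C \to N \cK C$ sends $c \in C_n$ to the chain map $C_*([-1,1]^n) \to C$ that picks out $c$ on the top cell and vanishes on proper faces, modulo degeneracies. The counit $\varepsilon : \cK N K \to K$ evaluates a homomorphism $\varphi : C_*([-1,1]^k) \to NK$ on the fundamental top cell to obtain a class in $NK_k$, which is then lifted to $K_k$ by adding ``thin'' correction terms, built from degeneracies and connections applied to the values of $\varphi$ on proper faces, so that the prescribed face identifications are matched.

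The main obstacle---and the reason the statement is advertised as an \emph{immediate} consequence of \cite[Theorem~14.8.1]{BrownHigginsSivera2011} rather than proved ex nihilo---is showing that $\varepsilon$ and $\eta$ are isomorphisms. Concretely, one must prove that the subspace of $K_k$ generated by degeneracies and connection cubes (the so-called ``thin'' subcomplex) is acyclic and splits off $K_k$ complementarily to a lift of $NK_k$. In the abelian setting this can be achieved by an explicit contracting homotopy built inductively from $\Gamma_j$ and $s_j$, in close analogy with the simplicial Moore complex argument. Alternatively, and more concisely, one invokes the cited BHS theorem, which handles the full nonabelian case of crossed complexes, and specializes to abelian groups: a crossed complex of abelian groups is exactly a chain complex, and the recovered composition law is precisely the abelian one from the remark above.
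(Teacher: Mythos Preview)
The paper does not give a proof of this theorem at all: it is stated with the attribution ``Immediate consequence of \cite[Theorem~14.8.1]{BrownHigginsSivera2011}'' and no further argument. Your proposal is therefore not competing with a proof in the paper but rather unpacking why the citation suffices.

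As such an unpacking, your sketch is essentially correct and aligned with the paper's intent. The key reduction you identify---that by the preceding remark (itself a consequence of \cite[Lemma~14.8.2]{BrownHigginsSivera2011}) the forgetful functor from cubical $\omega$-groupoids in $\K$-spaces to cubical $\K$-spaces with connections is an isomorphism---is exactly what makes the cited BHS theorem applicable, since Theorem~14.8.1 of BHS establishes the equivalence between chain complexes and cubical abelian groups with connections. Your descriptions of $\cK$ and $N$ match the statement, and your identification of the acyclicity of the thin subcomplex as the nontrivial content is accurate. The explicit unit/counit constructions you sketch are plausible but not needed for the paper's purposes; the paper is content to invoke the reference and move on.
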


\subsubsection{Some useful maps}
\label{sec useful maps}

Let us now define some maps between cubes that will prove useful in Section~\ref{sec non-formal all}.

\begin{definition}\label{def Phi}
  Let $n \geq 1$ be an integer and $S,T\subseteq\underline{n}$ be subsets with $S\cap T=\emptyset$ and $T \neq \emptyset$.
  Let $\Phi_{S,T}:[-1,1]^n\rightarrow [-1,1]^{\underline{n}\setminus T}$ such that for $t\in [-1,1]^n$ and $j \in \underline{n} \setminus T$, we have:
  \[
    \Phi_{S,T} (t)_j
    \coloneqq
    \begin{cases}
      \min(t_j,-\max\{t_i\mid i\in T\}),
       & \text{if } j\in S;
      \\
      \max(t_j,-\max\{t_i\mid i\in T\}),
       & \text{if } j\not\in S.
    \end{cases}
  \]
\end{definition}

\begin{example}
  Let $n = 4$, $S = \{1\}$ and $T = \{2,4\}$. Then we have:
  \[
    \Phi_{\{1\}, \{2,4\}}(t) =
    \begin{pmatrix}
      \min (t_1, -\max (t_2,t_4)) &
      \cdot                       &
      \max (t_3, -\max (t_2,t_4)) &
      \cdot
    \end{pmatrix}.
  \]
\end{example}

The maps $\Phi_{S,T}$ defined above are composites of cofaces, coconnections, and groupoids cooperations, and the diagonal map $\Delta : [-1,1] \to [-1,1]^2, \; t \mapsto (t,t)$.
The cubical analogue of the Alexander--Whitney map $\triangle : C_*(A_\bullet \times B_\bullet) \to C_*(A_\bullet) \otimes C_*(B_\bullet)$ provides a cubical approximation of the diagonal.
We will not need its explicit formula, which  can be found using the technique of acyclic models (see~\cite{EilenbergMacLane1953,EilenbergMacLane1953a,EilenbergZilber1953} where this is worked out for simplicial sets).
We can thus define operations induced (contravariantly) by $\Phi_{S, T}$ on any cubical $\omega$-groupoid $K_\bullet$, replacing occurrences of the diagonal by $\triangle$.

\begin{convention}\label{conv Phi}
  To ease notation, we will simply denote the above operations on a cubical $\omega$-groupoid $K_\bullet$ by $K_{n - |T|} \to K_n, \; x \mapsto x \Phi_{S, T}$, even though they are not literally obtained by precomposition with $\Phi_{S,T}$ in general.
\end{convention}

The following maps will also be useful.

\begin{definition}
  Let $n\geq 1$ be an integer. Let $\Psi_n:[-1,1]^{n+1}\rightarrow [-1,1]^n$ such that for $t\in [-1,1]^{n+1}$ and $j\in\underline n$ we have:
  $$
    \Psi_n(t)_j\coloneqq\begin{cases}
      t_j,               & \text{if } j<n;   \\
      \min (t_n,t_{n+1}) & \text{if } j = n.
    \end{cases}
  $$
  Note that if $X$ is a topological space and $\sigma \in K_n X$, then $\sigma \Psi_n = {-_n} {-_{n+1}} \Gamma_n({-_n} \sigma)$ can be expressed in terms of cubical $\omega$-groupoid operations.
  The operation $K_n \to K_{n+1}, \, x \mapsto x \Psi_n$ thus makes sense in any cubical $\omega$-groupoid $K_\bullet$, just like in Convention~\ref{conv Phi}.
\end{definition}

\begin{remark}
  We insist that our operations can be defined in arbitrary cubical $\omega$-groupoids as we will to apply them to arbitrary cofibrant operads in Sections~\ref{sec proof non formal 2}, \ref{sec sketch non formal 3}, and~\ref{sec proof non-formal all}.
\end{remark}

\subsection{Relative operads}
\label{sec relative operads}

We refer to \textcite{LodayVallette2012} and \textcite[Part I(a)]{Fresse2017} for background on operads.
Usually, a (symmetric) operad $\opP$ is indexed by integers ($\opP = \{ \opP(k) \}_{k \geq 0}$) and acted upon by symmetric groups $\Sigma_k \curvearrowright \opP(k)$.
We will mainly use an equivalent point of view where an operad is indexed by arbitrary finite sets ($\opP = \{ \opP(A) \}_{A \text{: finite set}}$) and acted upon by bijection of finite sets.
The two points of view are equivalent: given an operad indexed by finite sets, we simply define $\opP(k) \coloneqq \opP(\underline{k})$, where $\underline{k} = \{1, \dots, k\}$.
We will also work with colored operads, that is, operads where the inputs and the output of an operation are decorated by a color, and where composition is possible only if the colors match.
We will only considers colored operads of a special kind, called relative operads~\cite{Voronov1999} or Swiss-Cheese type operads~\cite{Willwacher2016}.
For such an operad $\opQ$, there are two colors, called respectively ``open'' (denoted by $\colo$) and ``closed'' (denoted by $\colc$).
An operation with a closed output may only have closed inputs, while an operad with an open input may have closed and open inputs.
The spaces of operations with a closed output $\opQ^{\colc} = \{ \opQ^{\colc}(A) \}_A$ thus form an ordinary operad.
The spaces of operations with an open output are denoted $\opQ^{\colo} = \{ \opQ^{\colo}(A) \}_A$.

\begin{convention}\label{conv:rel-op}
  Given a unicolored operad $\opP$, we will call a colored operad $\opQ$ such that $\opQ^{\colc} = \opP$ a \emph{relative} $\opP$-operad.
  Its components will be denoted $\opQ(A) \coloneqq \opQ^{\colo}(A)$ for a bicolored set $A$.
\end{convention}

The above convention follows the terminology of \textcite{Voronov1999}.
We will work heavily with a subset of operations in relative operads for which it will be useful to have coherent notation:

\begin{convention}\label{conv pm}
  Let us denote a set with two open-colored elements and its Cartesian powers by:
  \begin{align}
    {\pm}   & \coloneqq \{ +, - \}, \\
    {\pm}^l & \coloneqq \{ \star_1 \dots \star_l \mid \star_i \in \pm \}.
  \end{align}
  In other words, we view elements of $\pm^l$ as strings of signs, without parentheses around them or commas between them.
  We also write the unique element of $\pm^0$ as:
  \begin{equation}
    \colo \in \pm^0.
  \end{equation}
\end{convention}

\begin{remark}
  If an element $\star \in \pm$ appears in an arithmetic computation, then we take the convention that $+$ has the value $+1$ and $-$ has the value $-1$.
  For example, $\frac{\star+1}{2}$ is equal to $1$ if $\star = +$ and $0$ otherwise.
\end{remark}

\begin{convention}\label{conv Qkl}
  Let $\opQ$ be a relative operad.
  For integers $k,l \geq 0$, we will write
  \begin{equation}
    \opQ(k, {\pm}^l) \coloneqq \opQ(\{\colc_1, \dots, \colc_k\}, \pm^l),
  \end{equation}
  where $\colc_1, \dots, \colc_k$ are closed-colored and the elements of $\pm^l$ are open-colored.
  If $k = 1$, we will allow ourselves the notational convenience of writing $\colc \coloneqq \colc_1$.
\end{convention}

\subsubsection{Cubical operads}

Both functors $N$ and $\cK$ of the cubical Dold--Kan equivalence (Theorem~\ref{thm dold-kan}) and the counit $N\cK(X) \to X$ are (lax) monoidal.
The unit $K_\bullet \to \cK N(K_\bullet)$, however, is not monoidal.
The adjunction thus does not readily induce an equivalence of category between dg-$\K$-operads and operads in cubical $\K$-linear $\omega$-groupoids.
However, we can use a result of \textcite{SchwedeShipley2003} to upgrade the cubical Dold--Kan equivalence for operads:

\begin{corollary}[of {\cite[Theorem~6.5]{SchwedeShipley2003}}]\label{cor eqv cub operads}
  There exists a Quillen equivalence between relative dg-$\K$-operads and relative operads in cubical $\K$-linear $\omega$-groupoids, whose right adjoint is given by applying the normalized chains functor $N : \omega \mathsf{Gpd}_{\K} \to \Ch(\K)$ arity wise.
\end{corollary}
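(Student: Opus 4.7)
The plan is to deduce the corollary from the cubical Dold--Kan equivalence (Theorem~\ref{thm dold-kan}) by invoking \cite[Theorem~6.5]{SchwedeShipley2003}, which gives sufficient conditions for a weak monoidal Quillen equivalence between symmetric monoidal model categories to lift to a Quillen equivalence between the associated categories of operads. Since the right adjoint of the cubical Dold--Kan pair is already the arity-wise normalization $N$, the desired description of the right adjoint of the lifted equivalence is automatic once the abstract machinery applies.

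I would proceed in three steps. First, equip both sides with their standard projective model structures: the usual one on $\Ch(\K)$, and the one on cubical $\K$-linear $\omega$-groupoids transferred along $N$. Both are cofibrantly generated, proper, and symmetric monoidal, and both satisfy the Schwede--Shipley monoid axiom; this is routine over a field of characteristic different from $2$, where the unit is cofibrant and all objects are small. Second, verify that $(\cK, N)$ is a weak monoidal Quillen equivalence: the lax monoidal structure on $N$ is given by the cubical Eilenberg--Zilber shuffle map, whose comparison $NK \otimes NL \to N(K \otimes L)$ is a quasi-isomorphism on cofibrants by the classical Eilenberg--Zilber theorem for cubical objects~\cite{EilenbergMacLane1953a}; the unit comparison $N\cK(\K) \to \K$ is an isomorphism; and the underlying adjunction is a Quillen equivalence by Theorem~\ref{thm dold-kan}. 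Third, pass to the relative (two-colored) setting: a relative operad (Convention~\ref{conv:rel-op}) is an algebra over a $\Sigma$-cofibrant colored operad with palette $\{\colc, \colo\}$, and Schwede--Shipley's argument carries over verbatim since it only uses the monoidal model structure on colored symmetric sequences, which inherits all the needed properties from the single-color case.

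The main obstacle is bookkeeping rather than a genuine mathematical difficulty: one must verify carefully that the model category of cubical $\K$-linear $\omega$-groupoids is cofibrantly generated and satisfies the pushout-product and monoid axioms with the cubical tensor product, and that these properties survive the passage to the relative setting. A pedantic version of the proof would also check that the various lax monoidal structures intertwine with operadic composition in a way compatible with Schwede--Shipley's transfer; but all of these checks reduce to formal consequences of the fact that $N$ and $\cK$ are mutually inverse as functors of the underlying abelian categories. This is why the authors can legitimately advertise the statement as an \emph{immediate} consequence of \cite[Theorem~6.5]{SchwedeShipley2003}.
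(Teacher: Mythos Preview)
Your proposal is correct and rests on the same underlying ingredient as the paper---namely that the cubical Dold--Kan pair $(\cK,N)$ is a weak monoidal Quillen equivalence via the cubical Eilenberg--Zilber map---but the organizational route differs. The paper does not pass through the description of relative operads as algebras over a $\Sigma$-cofibrant two-colored operad; instead it observes that relative operads are \emph{monoids} in the category of pairs of symmetric/bisymmetric collections equipped with the plethysm (composition) monoidal product, and then invokes \cite[Theorem~3.12, part~3]{SchwedeShipley2003} (the transfer theorem for categories of monoids), following verbatim the template of \cite[Section~4.2]{SchwedeShipley2003} where the simplicial Dold--Kan case is treated. Your approach has the virtue of matching the literal citation in the corollary's title and of being conceptually uniform with the uncolored case; the paper's approach buys a shortcut by absorbing all the colored-operad and symmetric-group bookkeeping into the plethysm product, so that one only needs the monoid version of the transfer theorem and need not worry about $\Sigma$-cofibrancy of an auxiliary colored operad.
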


\begin{proof}
  Relative operads can be described as monoids in the category of pairs of symmetric/bisymmetric collections $(\opP(A), \opQ(B))_{A, B}$ (with $A$ ranging over finite sets and $B$ over finite, bicolored sets) with respect to the plethysm monoidal product.
  Just like in \cite[Section~4.2]{SchwedeShipley2003}, we can apply \cite[Theorem~3.12, part 3]{SchwedeShipley2003} to immediately obtain that the functor induced by $N$ from relative dg-$\K$-operads to relative operads in cubical $\K$-linear $\omega$-groupoids fits into a Quillen equivalence of models categories.
\end{proof}

We will require the following construction on certain elements of an operad of cubical chains over a topological operad:

\begin{definition}
  Let $\opP$ be a topological operad.
  For chains
  \begin{align*}
    \bar x
     & \in C_{m+1}\opP(A),
     & \bar y
     & \in \textstyle\prod_A C_{n^a+1}\opP(B^a),
  \end{align*}
  let
  \begin{align*}
    k & =m+\textstyle\sum_{A}n^a,
      & \Delta(\bar x(\bar y^a))
      & \in C_{k+1}\opP(\textstyle\coprod_A B^a)
  \end{align*}
  such that for $t\in[-1,1]^{\underline m\sqcup (\coprod_{A}\underline n^a)\sqcup\{k+1\}}$ we have
  $$
    \Delta(\bar x(\bar y^a))(t)
    =\bar x(\pr_{\underline m}t,t_{k+1})
    (\bar y^a(\pr_{\underline n^a}t,t_{k+1})).
  $$

  If we further have $\bar z\in \prod_{\coprod_A B^a}C_{p^{ab}+1}\opP(C^{ab})$ then let:
  $$
    \Delta(\bar x(\bar y^a(\bar z^{ab})))
    \coloneqq \Delta(\Delta(\bar x(\bar y^a))(\bar z^{ab}))
    =\Delta(\bar x(\Delta(\bar y^a(\bar z^{ab})))).
  $$
  This construction is extended to operads in cubical $\omega$-groupoids using the cubical analogue of the Alexander--Whitney map.
\end{definition}

\subsection{The little cubes and Swiss-Cheese operads}
\label{sec cubes operads}

Let us fix some dimension $n \geq 1$ throughout the paper.
In what follows, we are going to consider Voronov's Swiss-Cheese operad, denoted by $\SCv_{n+1}$~\cite{Voronov1999}.
Let us briefly recall its construction.
We first define the little $(n+1)$-cubes operad, denoted $\C_{n+1}$.

\begin{remark}
  Since the first coordinate will be distinguished to define the Swiss-Cheese operad, we will view elements of $[-1, 1]^{n+1}$ as being indexed by $[n] = \{0, \dots, n\}$, with the convention that $0$ is distinguished.
\end{remark}

\begin{definition}
  For two points $x, x' \in [-1, 1]^{n+1}$ satisfying $x_i < x'_i$ for all $i \in [n]$, denote the cuboid with lower-left corner $x$ and upper-right corner $x'$ by:
  \[
    C(x; x') \coloneqq \{ y \in [-1, 1]^{n+1} \mid \forall i \in [n], \, x_i \leq y_i \leq x'_i \}.
  \]
\end{definition}

\begin{definition}
  Let $A$ be a finite set.
  The component of the \emph{little $(n+1)$-cubes operad} indexed by $A$, denoted $\C_{n+1}(A)$, is the collection of $A$-tuples of pairs of points $x = \{x(a); x'(a)\}_{a \in A}$ in $([-1, 1]^{n+1})^{A \sqcup A}$, satisfying the following conditions:
  \begin{itemize}
    \item For all $a \in A$ and $i \in [n]$, we have $x(a)_i < x'(a)_i$;
    \item For all $a \neq b \in A$, the cubes $C(x(a); x'(a))$ and $C(x(b); x'(b))$ have disjoint interiors.
  \end{itemize}
  We can view a pair $(x(a), x'(a))$ as a rectilinear embedding $[-1, 1]^{n+1} \hookrightarrow [-1, 1]^{n+1}$ defined by the inclusion $C(x(a), x'(a)) \subseteq [-1, 1]^{n+1}$.
  Operadic composition is defined by composition of embeddings (see Figure~\ref{fig op compos c2}).
\end{definition}

\begin{figure}[htbp]
  \centering
  \raisebox{-.5\height}{\includegraphics[width=.25\linewidth]{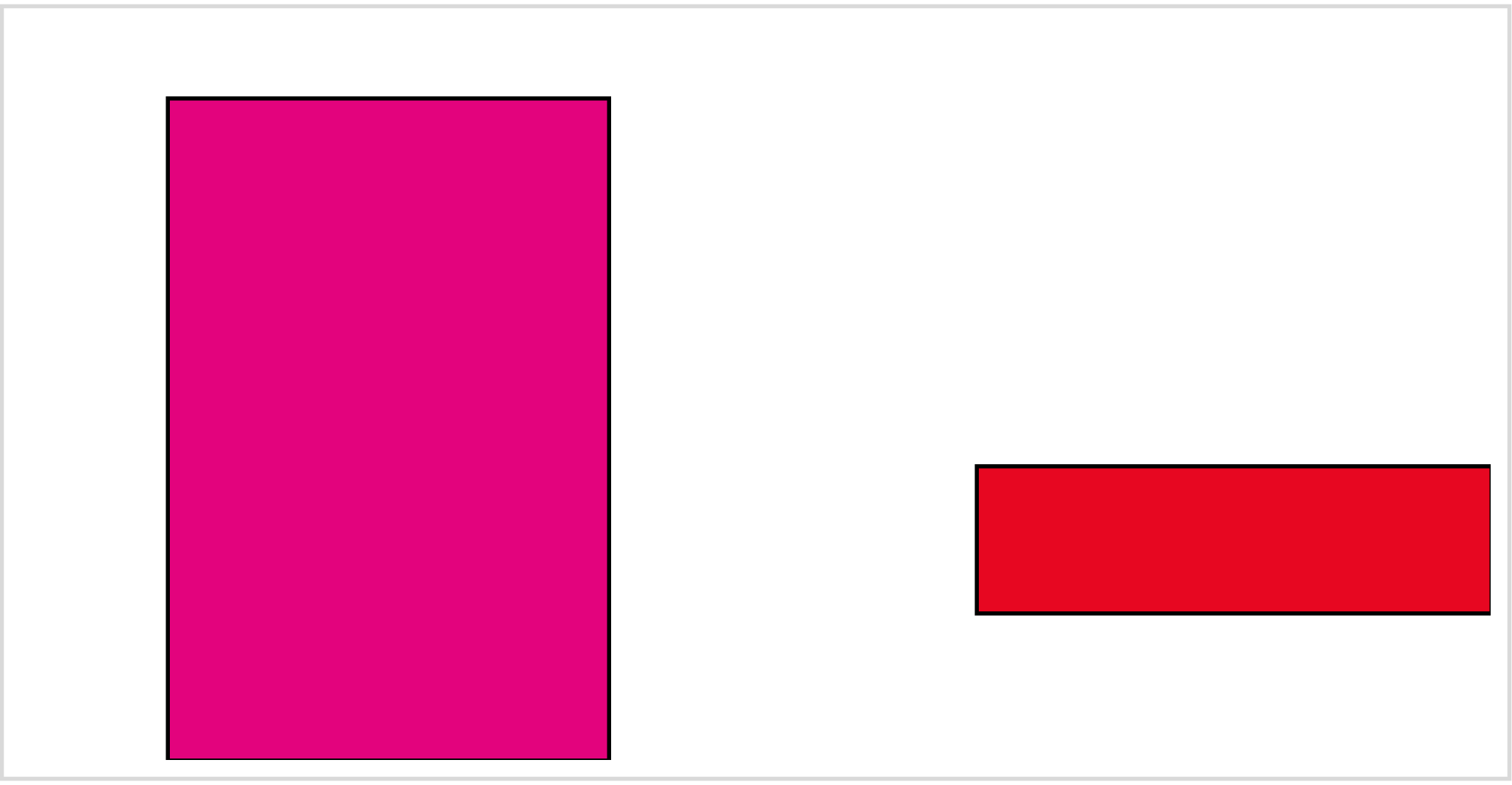}}
  $\circ_2$
  \raisebox{-.5\height}{\includegraphics[width=.25\linewidth]{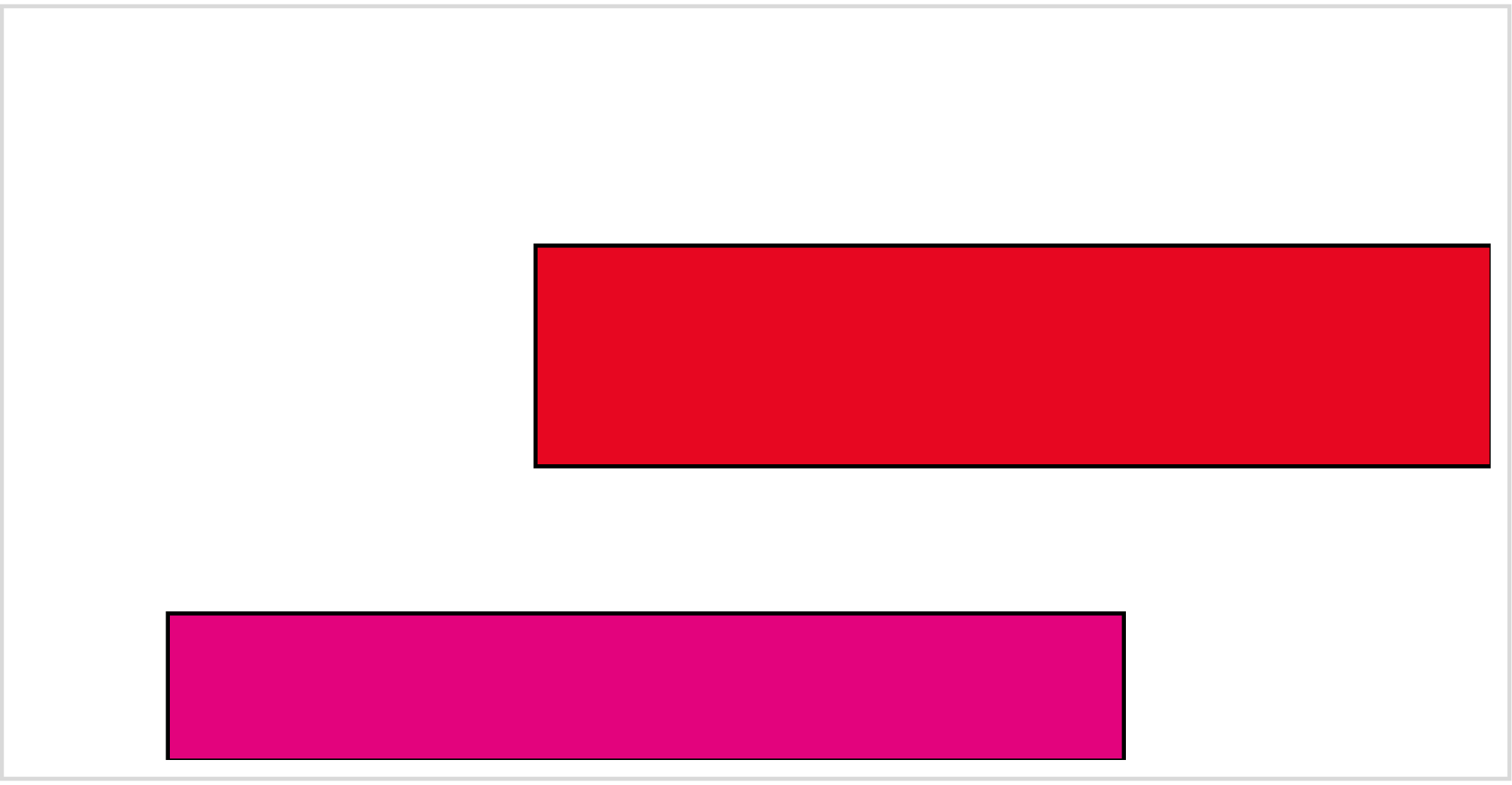}}
  =
  \raisebox{-.5\height}{\includegraphics[width=.25\linewidth]{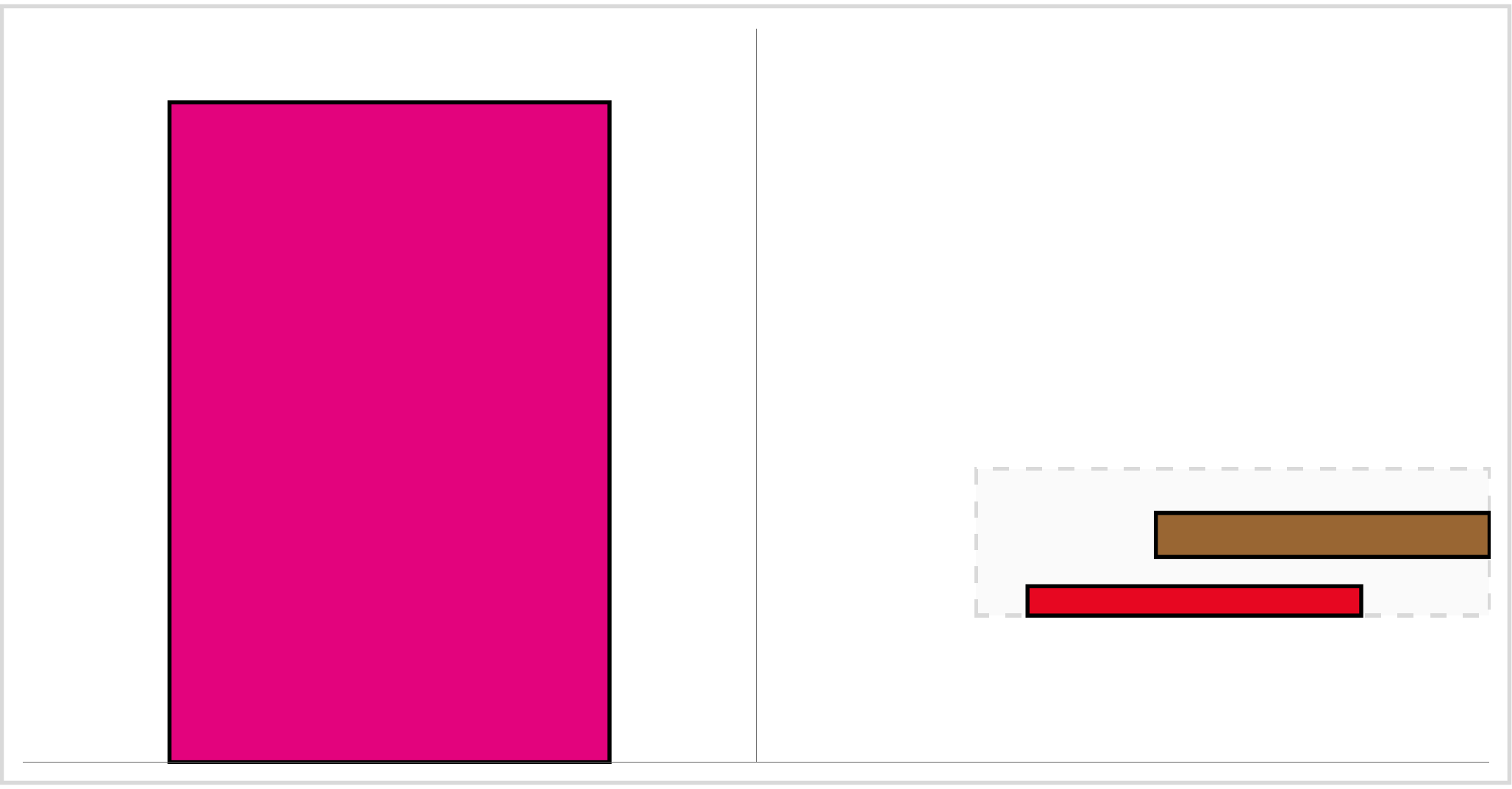}}
  \caption{Operadic composition in $\C_2$.}
  \label{fig op compos c2}
\end{figure}

We consider the version of the Swiss-Cheese operad defined by \textcite{Voronov1999} as follows.

\begin{definition}\label{def:scv}
  Let $n \geq 2$ be an integer and let $A$ be a finite set whose elements are decorated either by the open or the closed color.
  The \emph{Swiss-Cheese} operad $\SCv_{n+1}$ is the relative operad defined by (where $A$ is a bicolored set and $\bar{A}$ is its underlying set):
  \begin{itemize}
    \item The space of operations with a closed output, $(\SCv_{n+1})^{\colc}(A)$, is equal to $\C_{n+1}(A)$ if all the inputs in $A$ are closed, and it is empty otherwise.
    \item The space of operations with an open output, $(\SCv_{n+1})^{\colo}(A)$, is empty if $A$ only contains closed inputs.
    \item Otherwise, if $A$ contains at least one open input, then $(\SCv_{n+1})^{\colo}(A)$ is given by the elements $\{x(a); x'(a)\} \in \C_n(\bar{A})$ such that for each $a \in A$ the ``lower-left corner'' $x(a)$ belongs to the ambient $n$-cube $[0, 1] \times [-1, 1]^n$. If $a \in A$ is open then $x(a)$ belongs to the face $\{0\} \times [-1, 1]^{n}$.
  \end{itemize}
  The composition in $\SCv_{n+1}$ is induced by that of $\C_{n+1}$.
\end{definition}

Following Convention~\ref{conv:rel-op}, given a bicolored set $A$ we will simply write $\C_{n+1}(A)$ instead of $(\SCv_{n+1})^{\colc}(A)$ and $\SCv_{n+1}(A)$ instead of $(\SCv_{n+1})^{\colo}(A)$ when no confusion can arise.

\begin{remark}\label{rmk:diff-voronov}
  We are forbidding operations with no open inputs, like in the original paper of \textcite{Voronov1999}.
  There is a different version of the Swiss-Cheese operad, denoted $\SC_{n+1}$, that includes these operations.
  This operad has been studied elsewhere (see the next section) and is now usually called the Swiss-Cheese operad.
\end{remark}

\begin{convention}
  For notational convenience, we will generally denote an element $x \in \SC_{n+1}(\{\colc_1, \dots, \colc_k\}, \{\colo_1, \dots, \colo_l\})$ under the form:
  \begin{equation}
    x =
    \begin{cases}
      [x_0(\colc_1), x'_0(\colc_1)] \times \dots \times [x_n(\colc_1), x'_n(\colc_1)]
       & (\colc_1)  \\
      \dots         \\
      [x_0(\colc_k), x'_0(\colc_k)] \times \dots \times [x_n(\colc_k), x'_n(\colc_k)]
       & (\colc_k)  \\
      [x_0(\colo_1), x'_0(\colo_1)] \times \dots \times [x_n(\colo_1), x'_n(\colo_1)]
       & (\colo_1)  \\
      \dots         \\
      [x_0(\colo_l), x'_0(\colo_l)] \times \dots \times [x_n(\colo_l), x'_n(\colo_l)]
       & (\colo_l),
    \end{cases}
  \end{equation}
  where $x_i(\dots), x'_i(\dots) \in [-1, 1]$ are the endpoints of the intervals that make up the cubes $x(\colc_1), \dots, x(\colc_k), x(\colo_1), \dots, x(\colo_l)$.
\end{convention}

\subsection{Turning squares into cubes}
\label{sec squares into cubes}

There are several possible ways of embeddings the little squares operad $\C_2$ inside the little cubes operad $\C_3$.
The classical inclusion $\inc[0]{2} : \C_2 \to \C_3$ sends a configuration of squares to a configuration of cubes of height $2$.
In other words, given a configuration $x = \bigl\{[x_0(a), x'_0(a)] \times [x_1(a), x'_1(a)] \bigr\}_{a \in A} \in \C_2(A)$, we apply the following map to each element of $x$:
\begin{equation}\label{ex:c0}
  \inc[0]{2} \bigl( [x_0(a), x'_0(a)] \times [x_1(a), x'_1(a)] \bigr) \coloneqq [-1, 1] \times [x_0(a), x'_0(a)] \times [x_1(a), x'_1(a)].
\end{equation}

\begin{figure}[htbp]
  \centering
  \subcaptionbox{$x \in \C_2(2)$}[.24\linewidth]{\includegraphics[width=\linewidth]{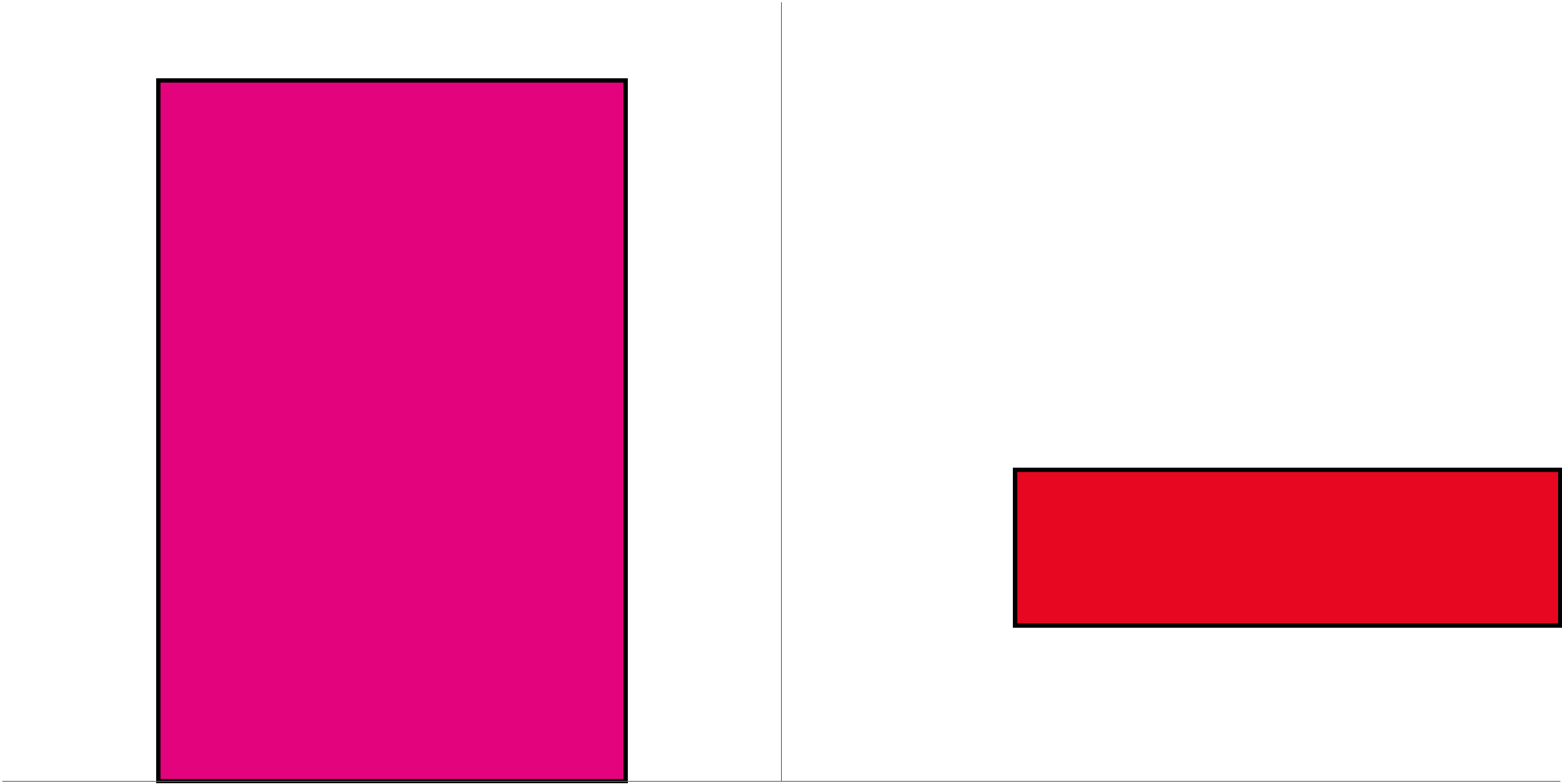}}
  \subcaptionbox{$\inc[0]{2} x \in \C_3(2)$}[.24\linewidth]{\includegraphics[width=\linewidth]{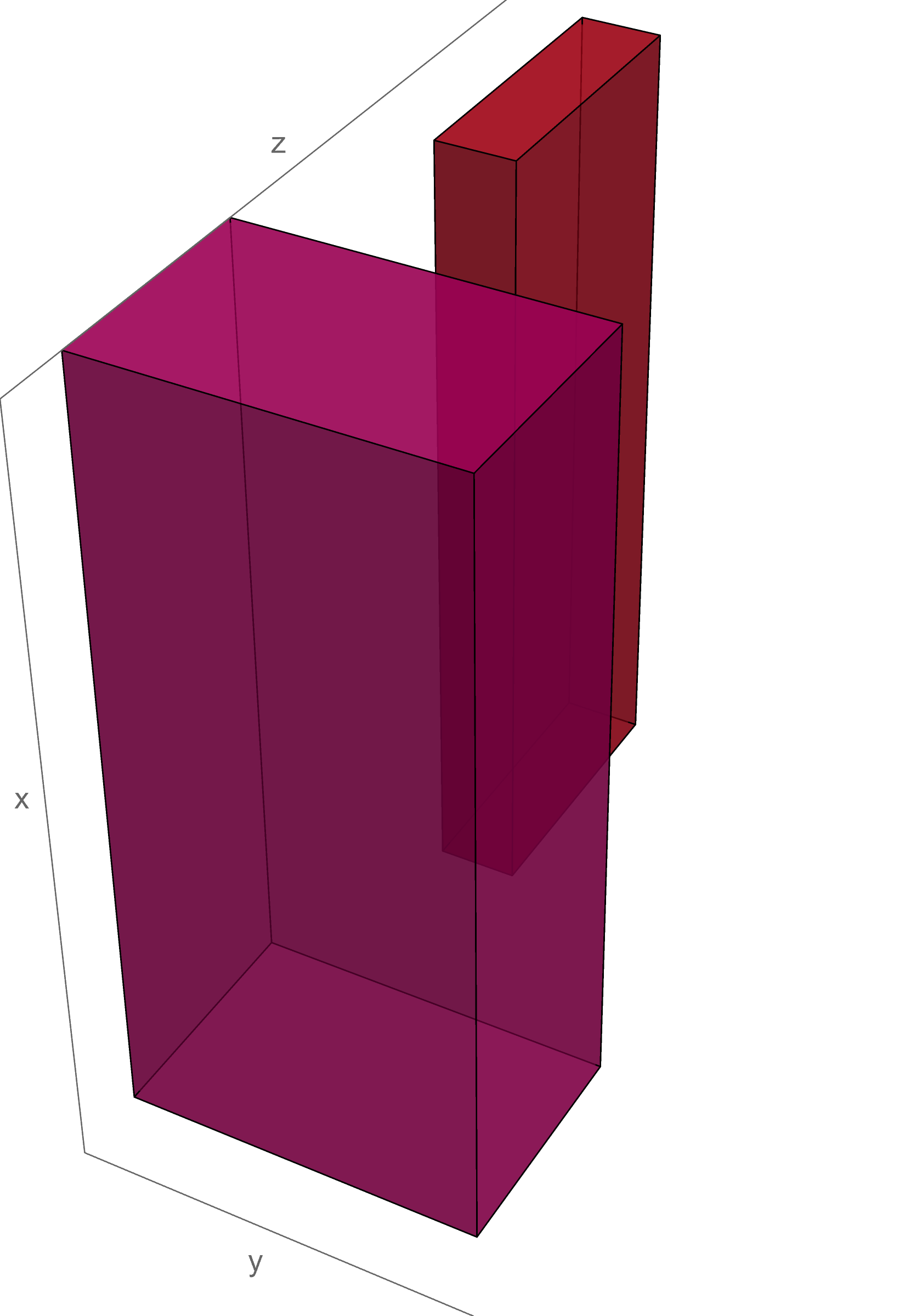}}
  \subcaptionbox{$\inc[1]{2} x \in \C_3(2)$}[.24\linewidth]{\includegraphics[width=\linewidth]{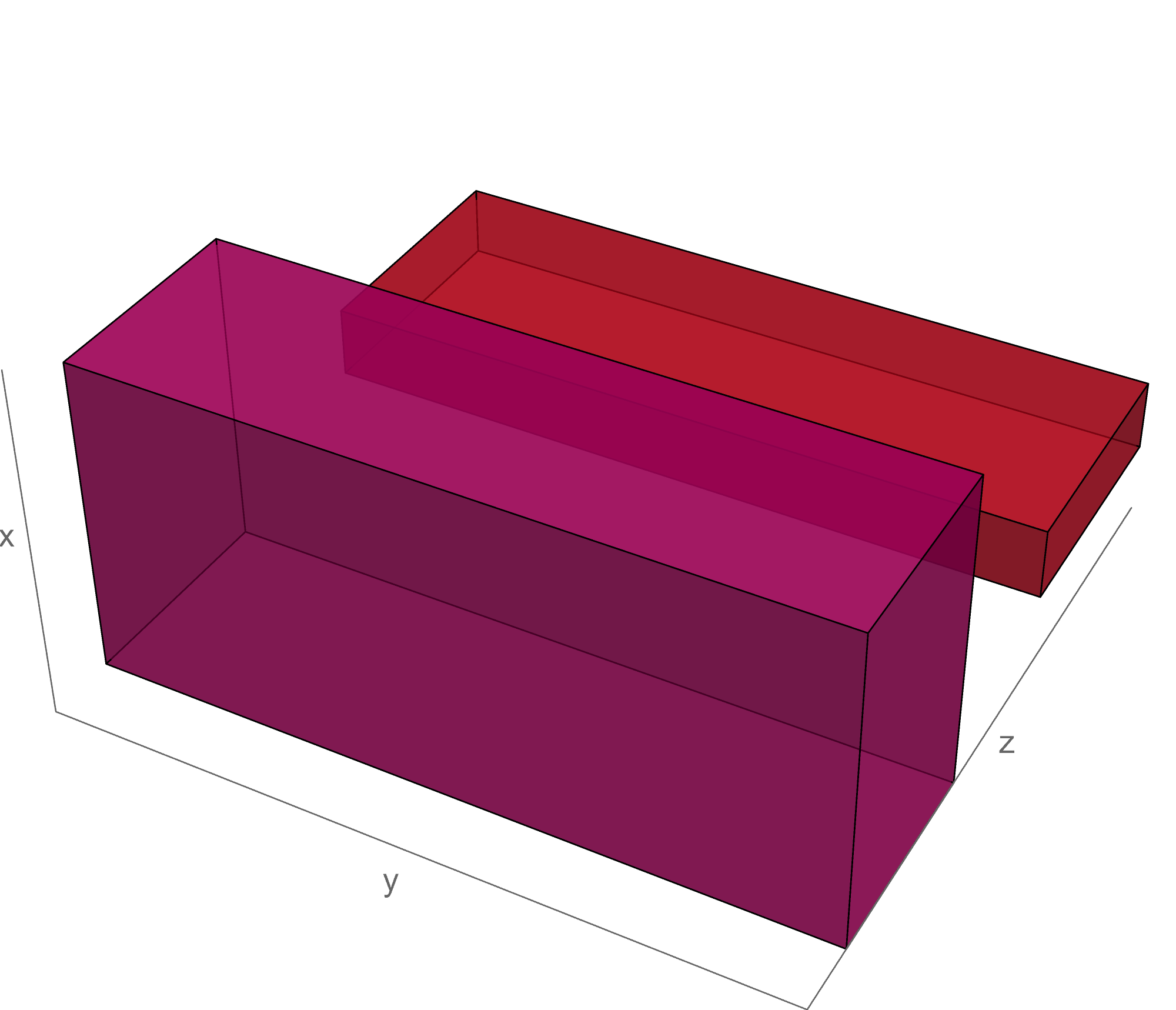}}
  \subcaptionbox{$\inc[2]{2} x \in \C_3(2)$}[.24\linewidth]{\includegraphics[width=\linewidth]{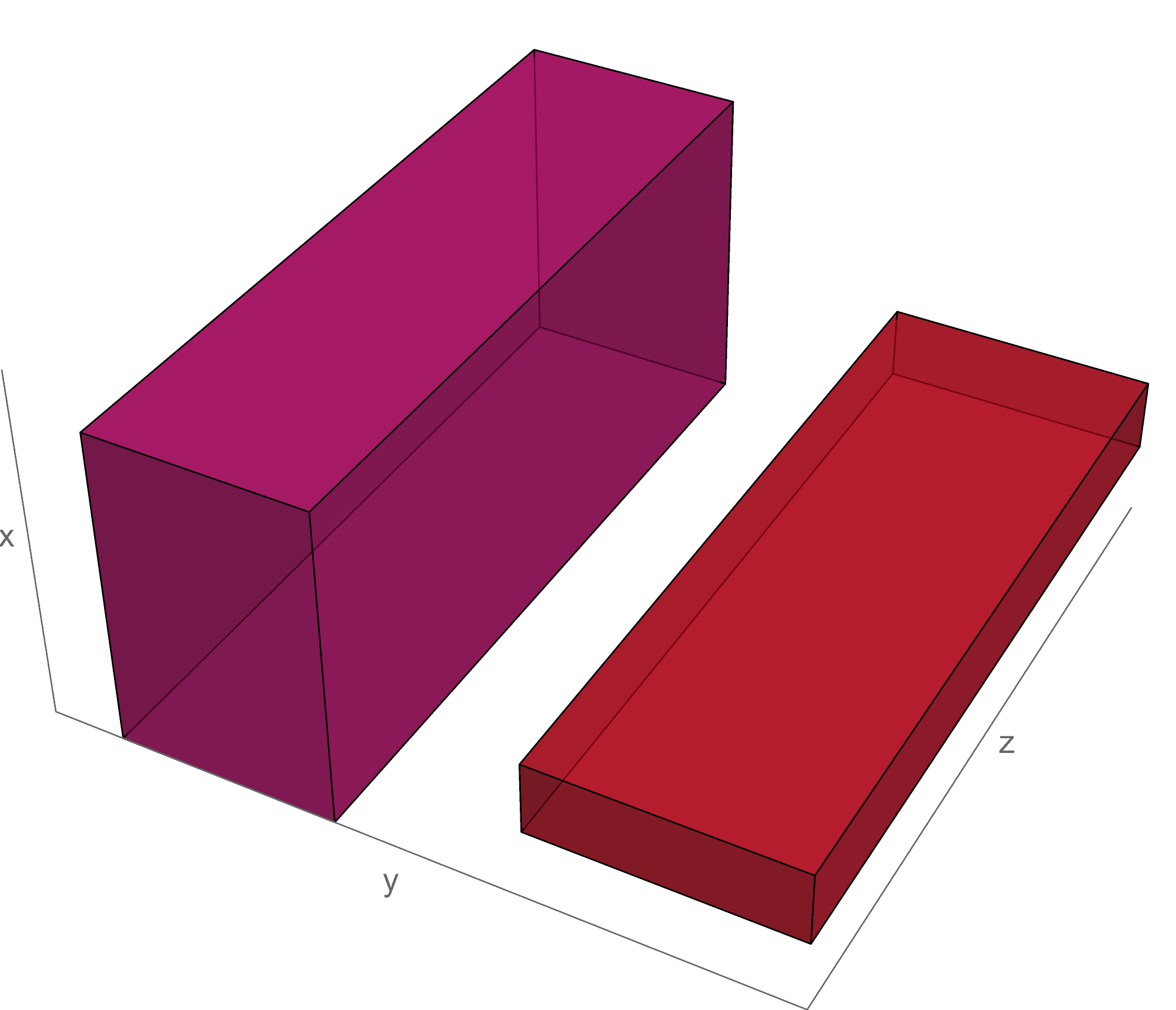}}
  \caption{The three inclusions $\C_2 \to \C_3$.}
  \label{fig:incl-c2-c3}
\end{figure}

This procedure is illustrated in Figure~\ref{fig:incl-c2-c3}.
However, when dealing with (cube-based) Swiss-Cheese operads, there are two other embeddings that are meaningful to consider.
These two embeddings are defined similarly to $\inc[0]{2}$ above, except that their images consist in configuration of cubes of width (resp. depth) $2$, rather than height $2$.
In order to have coherent notation with the next section, let us introduce the following definition.

\begin{definition}\label{def inc2}
  Let $x = \{[x_0(a), x'_0(a)] \times [x_1(a), x'_1(a)]\}_{a \in A} \in \C_2(A)$ be a configuration of squares for some finite set $A$.
  Define two elements $\inc[1]{2}x, \inc[2]{2}x\in \C_3(A)$ by applying these two maps to each cube in $x$ (see Figure~\ref{fig:incl-c2-c3}):
  \begin{align}
    \inc[1]{2} \bigl( [x_0(a), x'_0(a)] \times [x_1(a), x'_1(a)] \bigr)
     & \coloneqq [x_0(a), x'_0(a)] \times [-1, 1] \times [x_1(a), x'_1(a)].
    \\
    \inc[2]{2} \bigl( [x_0(a), x'_0(a)] \times [x_1(a), x'_1(a)] \bigr)
     & \coloneqq [x_0(a), x'_0(a)] \times [x_1(a), x'_1(a)] \times [-1, 1],
  \end{align}
\end{definition}

In the previous definition, the superscripts refer to the dimension of the cubes in the target operad, and the subscripts refer to the positions at which the square is widened.
More generally, let us define the following inclusions.
Since we will never need the case $0 \in S$, we will omit it to avoid certain complications.

\begin{definition}\label{def inc}
  Let $1 \leq k \leq n$ be integers and $S = \{ s_1 < \dots < s_{k} \} \subseteq \underline{n}$. Consider the complement $S^c = \underline{n} \setminus S = \{ u_1 < \dots < u_{n-k} \}$.
  Let $x = \bigl\{ \prod_i [x_i(a), x'_i(a)] \bigr\}_{a \in A} \in \C_{n-k+1}(A)$ be a configuration of $(n-k+1)$-cubes for some finite set $A$.
  Let us define a new element:
  \begin{equation}
    \inc[S]{n} x = \inc[\{s_1, \dots, s_k\}]{n} x \in \C_{n+1}(A)
  \end{equation}
  by, for all $a \in A$,
  \begin{equation}
    (\inc[S]{n} x)(a) \coloneqq [x_0(a), x'_0(a)] \times \prod_{j=1}^{n} [y_j(a), y'_j(a)],
  \end{equation}
  where:
  \begin{align}
    y_j(a)
     & =
    \begin{cases}
      x_i(a), & \text{if } \exists i \text{ s.t. } u_i = j; \\
      -1,     & \text{if } \exists i \text{ s.t. } s_i = j.
    \end{cases}
     &
    y'_j(a)
     & =
    \begin{cases}
      x'_i(a), & \text{if } \exists i \text{ s.t. } u_i = j; \\
      1,       & \text{if } \exists i \text{ s.t. } s_i = j.
    \end{cases}
  \end{align}
\end{definition}

\begin{remark}
  Recall the maps $\inc[i]{2}$ defined at the beginning of the section.
  We obviously have $\inc[i]{2} = \inc[\{i\}]{2}$ for $i \in \{1,2\}$.
\end{remark}

The proof of the following proposition is straightforward.

\begin{proposition}
  The map $\inc[S]{n} : \C_{n-|S|+1} \to \C_{n+1}$, for $S \subseteq \underline{n}$, is an inclusion of operads and restricts to an inclusion of operads $\SCv_{n-|S|+1} \to \SCv_{n+1}$.
\end{proposition}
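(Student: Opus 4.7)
The plan is to verify the operad-morphism axioms one at a time, as the author's ``straightforward'' warning suggests. The map $\inc[S]{n}$ is defined coordinatewise: it copies the input cube data into coordinate $0$ and into the coordinates indexed by $\underline{n} \setminus S$, while inserting the full interval $[-1,1]$ in each coordinate in $S$. Continuity and injectivity are then immediate, since the input is recovered by projecting onto coordinate $0$ and the coordinates in $\underline{n} \setminus S$.

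First I would check that the image lies in $\C_{n+1}(A)$, i.e., that disjoint interiors are preserved. If $x(a)$ and $x(b)$ have disjoint interiors, they are separated in some coordinate $i$; that coordinate corresponds to either $0$ or some $u_i \in \underline{n}\setminus S$ in the target, and the separation persists since only the coordinates in $S$ are inflated. Equivariance under bijections of $A$ is built in, as the construction is applied cube-by-cube. I would then turn to compatibility with operadic composition, which is the step I expect to carry the most bookkeeping. Operadic composition of little cubes is defined by affine rescaling of each inner configuration into the corresponding cube of the outer configuration. In the coordinates indexed by $S$, both $\inc[S]{n} x$ and each $\inc[S]{n} y^a$ span $[-1,1]$, so the induced affine rescaling is the identity there; in the remaining coordinates it agrees with the original composition in $\C_{n-|S|+1}$. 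This yields $\inc[S]{n}(x(y^a)) = (\inc[S]{n} x)(\inc[S]{n} y^a)$.

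Finally, for the restriction to $\SCv$, the key observation is that $0 \notin S$ by assumption, so the distinguished coordinate is never touched. Hence both the containment in the ambient cube $[0,1]\times[-1,1]^n$ for closed inputs and the lower-face condition ``$x_0(a) = 0$'' for open inputs transfer verbatim from $\SCv_{n-|S|+1}$ to $\SCv_{n+1}$. The only place where I anticipate needing to write anything out carefully is the composition check, but even there the verification reduces to the tautology that rescaling $[-1,1]$ to $[-1,1]$ is the identity, confirming the author's assertion that the proof is routine.
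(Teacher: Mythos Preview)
Your proposal is correct and matches the paper's intent: the paper omits the proof entirely, stating only that it is ``straightforward,'' and your outline is exactly the routine coordinate-by-coordinate verification this invites. The only item you might add for completeness is that the operadic unit (the full cube $[-1,1]^{n-|S|+1}$) is sent to the unit $[-1,1]^{n+1}$, which is immediate from the definition.
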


We will use the following convention about composition of operations that will prove useful in Sections~\ref{sec non-formal 3} and~\ref{sec non-formal all}.
Its main interest is that it avoids the search for the appropriate permutation of inputs that yield the correct operations.

\begin{convention}\label{conv compos pm}
  Let $n \geq 0$ be an integer, $S, T \subseteq \underline{n}$ be two \emph{disjoint} sets, with $S = \{s_1<\dots<s_k\}$ and $T=\{t_1<\dots<t_l\}$.
  Suppose that we are given an operation $x \in \SCv_{|S|+1}(C, \pm^S)$ for some set of closed inputs $C$, and operations $y_{\star} \in \SCv_{|T|+1}(C_{\star}, \pm^T)$ for all $\star \in \pm^{|S|}$.
  Then we will view the operation $\inc[\underline{n} \setminus S]{n} x \bigcirc_{\star \in \pm^{|S|}} \inc[\underline{n} \setminus T]{n} y_{\star}$ as an element of the following space:
  \begin{equation}
    \inc[\underline{n} \setminus S]{n} x \underset{\star \in \pm^{|S|}}{\bigcirc} \inc[\underline{n} \setminus T]{n} y_{\star} \in
    \SCv_{n+1}\bigl( C \sqcup \bigsqcup_{\star \in \pm^S} C_\star, \; \pm^{S \sqcup T} \bigr),
  \end{equation}
  by declaring that the input indexed by $\star' = \star'_{t_1} \dots \star'_{t_l}$ in $y_\star$, where $\star = \star_1 \dots \star_k$, becomes indexed by the concatenation $\star \star'$ reordered such that the indices $s_i, t_j$ appear in their natural order in $S \sqcup T \subseteq \underline{n}$.
\end{convention}

We may think of the decorations as ``bubbling up'' and being distributed to the indices of the open inputs $y_{*}$.
While this convention may appear opaque at first, it greatly simplifies formulas.
Let us illustrate it with an example.
Since closed inputs are not relevant in the discussion, we will consider operation without any closed inputs.

\begin{example}
  Let $n = 5$, $S = \{1, 4\}$, and $T = \{2\}$.
  Let $x \in \SCv_3(\emptyset, \pm^{\{1,4\}})$ and, for all $\star \in \pm^{\{1,4\}}$, let $y_\star \in \SCv_2(\emptyset, \pm^{\{2\}})$.
  Then we consider the colors of the composition:
  \begin{equation*}
    \inc[\{2,3,5\}]{5}x \underset{\star \in \pm^S}{\bigcirc} \inc[\{1,3,4,5\}]{5} y_\star
    \in \SCv_6(\emptyset, \pm^{\{1,2,4\}})
  \end{equation*}
  to be defined as in the following illustration.
  We circle the signs coming from $x$ while we write those coming from $y$ inside squares to distinguish them.
  \tikzset{optree/.style={grow'=up, every child node/.style={font=\small}, sibling distance=12mm, level distance=8mm}}
  \forestset{
    operad/.style={
        font=\small,
        for tree={grow'=90, l sep=.5cm, s sep = .25cm}}
  }

  \begin{multline*}
    \begin{forest}{operad}
      [{$\inc[\{2,3,5\}]{5} x$}, baseline
          [{$\color{red}{\oplus}{\oplus}$}]
          [{$\color{red}{\oplus}{\ominus}$}]
          [{$\color{red}{\ominus}{\oplus}$}]
          [{$\color{red}{\ominus}{\ominus}$}]
      ]
    \end{forest}
    \circ
    \Bigl(
    \begin{forest}{operad}
      [{$\inc[\{1,3,4,5\}]{5} y_{++}$}, baseline
          [{$\color{blue}{\boxplus}$}]
          [{$\color{blue}{\boxminus}$}]
      ]
    \end{forest}
    ,
    \begin{forest}{operad}
      [{$\inc[\{1,3,4,5\}]{5} y_{+-}$}, baseline
          [{$\color{blue}{\boxplus}$}]
          [{$\color{blue}{\boxminus}$}]
      ]
    \end{forest}
    ,
    \begin{forest}{operad}
      [{$\inc[\{1,3,4,5\}]{5} y_{-+}$}, baseline
          [{$\color{blue}{\boxplus}$}]
          [{$\color{blue}{\boxminus}$}]
      ]
    \end{forest}
    ,
    \begin{forest}{operad}
      [{$\inc[\{1,3,4,5\}]{5} y_{--}$}, baseline
          [{$\color{blue}{\boxplus}$}]
          [{$\color{blue}{\boxminus}$}]
      ]
    \end{forest}
    \Bigr)
    \\
    =
    \begin{forest}{operad}
      [{$\inc[\{2,3,5\}]{5} x$}, baseline
          [{$\inc[\{1,3,4,5\}]{5} y_{++}$}
              [{$\color{red}{\oplus}\color{blue}{\boxplus}\color{red}{\oplus}$}]
              [{$\color{red}{\oplus}\color{blue}{\boxminus}\color{red}{\oplus}$}]
          ]
          [{$\inc[\{1,3,4,5\}]{5} y_{+-}$}
              [{$\color{red}{\oplus}\color{blue}{\boxplus}\color{red}{\ominus}$}]
              [{$\color{red}{\oplus}\color{blue}{\boxminus}\color{red}{\ominus}$}]
          ]
          [{$\inc[\{1,3,4,5\}]{5} y_{-+}$}
              [{$\color{red}{\ominus}\color{blue}{\boxplus}\color{red}{\oplus}$}]
              [{$\color{red}{\ominus}\color{blue}{\boxminus}\color{red}{\oplus}$}]
          ]
          [{$\inc[\{1,3,4,5\}]{5} y_{--}$}
              [{$\color{red}{\ominus}\color{blue}{\boxplus}\color{red}{\ominus}$}]
              [{$\color{red}{\ominus}\color{blue}{\boxminus}\color{red}{\ominus}$}]
          ]
      ]
    \end{forest}
    .
  \end{multline*}
\end{example}

\subsection{Formality and non-formality}
\label{sec:formality nonformality}

Our aim in this paper is to establish the non-formality of Voronov's Swiss-Cheese operad $\SCv_{n+1}$.
Let us briefly recall this notion and earlier results about the (non-)formality of the little cubes and Swiss-Cheese operads.

First, let us note that the functors $C_*$ and $H_*$ are lax-monoidal using the Künneth maps.
It thus follows that given a (possibly colored) topological operad $\opP$, the collections $C_* \opP = \{ C_*(\opP(A)) \}$ and $H_* \opP = \{ H_*(\opP(A)) \}$ form operads in chain complexes.

\begin{definition}\label{def:formal}
  A topological operad $\opP$ is \emph{(stably) formal} (over $\K$) if $C_* \opP$ and $H_* \opP$ are quasi-isomorphic, i.e., if there exists a zigzag of morphisms of dg-operads which are quasi-isomorphisms in every arity:
  $$C_* \opP \xleftarrow{\sim} \opQ_1 \xrightarrow{\sim} \dots \xleftarrow{\sim} \opQ_k \xrightarrow{\sim} H_* \opP.$$
\end{definition}

\begin{remark}
  The terminology comes from \cite[Theorem 1.1]{LambrechtsVolic2014}.
  If $\K = \Q$, then there is a stronger notion of formality that require the Hopf cooperad given by the cohomology of $\opP$ to be a rational model of $\opP$ (see e.g. \cite[Part II(b)]{Fresse2017a} for background).
  Strong formality implies stable formality.
  In the sequel, we are going to drop the adverb ``stably'' as we are only ever going to talk about stable formality.
\end{remark}

\begin{remark}
  Using arguments from model category theory, an operad $\opP$ is formal if and only if there exists a span $C_* \opP \xleftarrow{\sim} \opQ \xrightarrow{\sim} H_* \opP$ (see e.g.~\cite{Hirschhorn2003}).
\end{remark}

Several (non-)formality results are known about the little cubes operads and its cousins:
\begin{itemize}[nosep]
  \item If $n=1$, then the question of formality becomes much simpler.
        The little intervals operad $\C_1$ is formal over any ring.
        As a topological operad, it is weakly equivalent to the (discrete) operad of associative algebras $\Ass = \{ \Sigma_r \}_{r \geq 0}$, with a direct map $\C_1 \to \Ass = \pi_0 \C_1$.
  \item The Swiss-Cheese operad $\SC_1$ is thus also formal over any ring.
        There is again a direct map $\SC_1 \to \pi_0 \SC_1$ which is a weak homotopy equivalence.
  \item The little cubes operad $\C_n$ is formal over $\Q$~\cite{Kontsevich1999,Tamarkin2003,LambrechtsVolic2014} for every $n$.
        It is however not formal over a field of positive characteristic, simply because $C_*\C_n(p)$ is not formal as a $\Sigma_p$-dg-module.
        For $n = 2$, it is also known to not be formal even if one forgets the action of the symmetric group (\textcite{Salvatore2018}).
  \item There is an obvious inclusion of operads $\C_n \hookrightarrow \C_{n+k}$ for $k \geq 0$.
        Definition~\ref{def:formal} easily generalizes to the definition of a formal morphism of operads (see e.g. \cite[Definition~1.3]{LambrechtsVolic2014}).
        The inclusion $\C_n \hookrightarrow \C_{n+k}$ is only formal over $\Q$ if $k \neq 1$, see \cite[Theorem~1.4]{LambrechtsVolic2014} and~\cite{TurchinWillwacher2018,FresseWillwacher2015}.
        This is related to the formality of the Swiss-Cheese operad (see the next point).
  \item The larger version of the Swiss-Cheese operad $\SC_{n+1}$ that allows operation without open inputs (see Remark~\ref{rmk:diff-voronov}) is already known not to be formal for $n \geq 2$.
        This was shown by \textcite{Livernet2015} in characteristic different from $2$, and it was shown by \textcite{Willwacher2017a} over $\Q$ (by proving that it is equivalent to the (non-)formality for $\C_{n-1} \to \C_n$).
        However, both proofs make a crucial use of the operations without open inputs, so they do not settle the formality of $\SCv_{n+1}$.
        Our proof is inspired by Livernet's proof for $n = 2$.
\end{itemize}

\section{Non-formality in dimension 2}
\label{sec non formal 2}

In this section, we record the proof of the non-formality of Voronov's Swiss-Cheese operad $\SCv_2$ that can be found in the second-named author's thesis~\cite{Vieira2018a}.

\subsection{Definition of the basic paths}
\label{sec:def-basic-paths}

Let us start by defining some basic elements of the chain complex of $\SCv_2$.
The elements will all be paths, that can then be viewed as 1-chains.
Recall Convention~\ref{conv Qkl} about the notation for relative operads, that we will apply to $\SCv_2$.

\begin{definition}\label{def pro1}\label{def act1}
  The product $\pro{1}$ and the action $\act{1}$ are defined by:
  \[
    \pro{1} =
    \begin{cases}
      [0, 1] \times [-1, 0] & (-) \\
      [0, 1] \times [0, 1]  & (+)
    \end{cases}
    =
    \raisebox{-.5\height}{\includegraphics[width=12em]{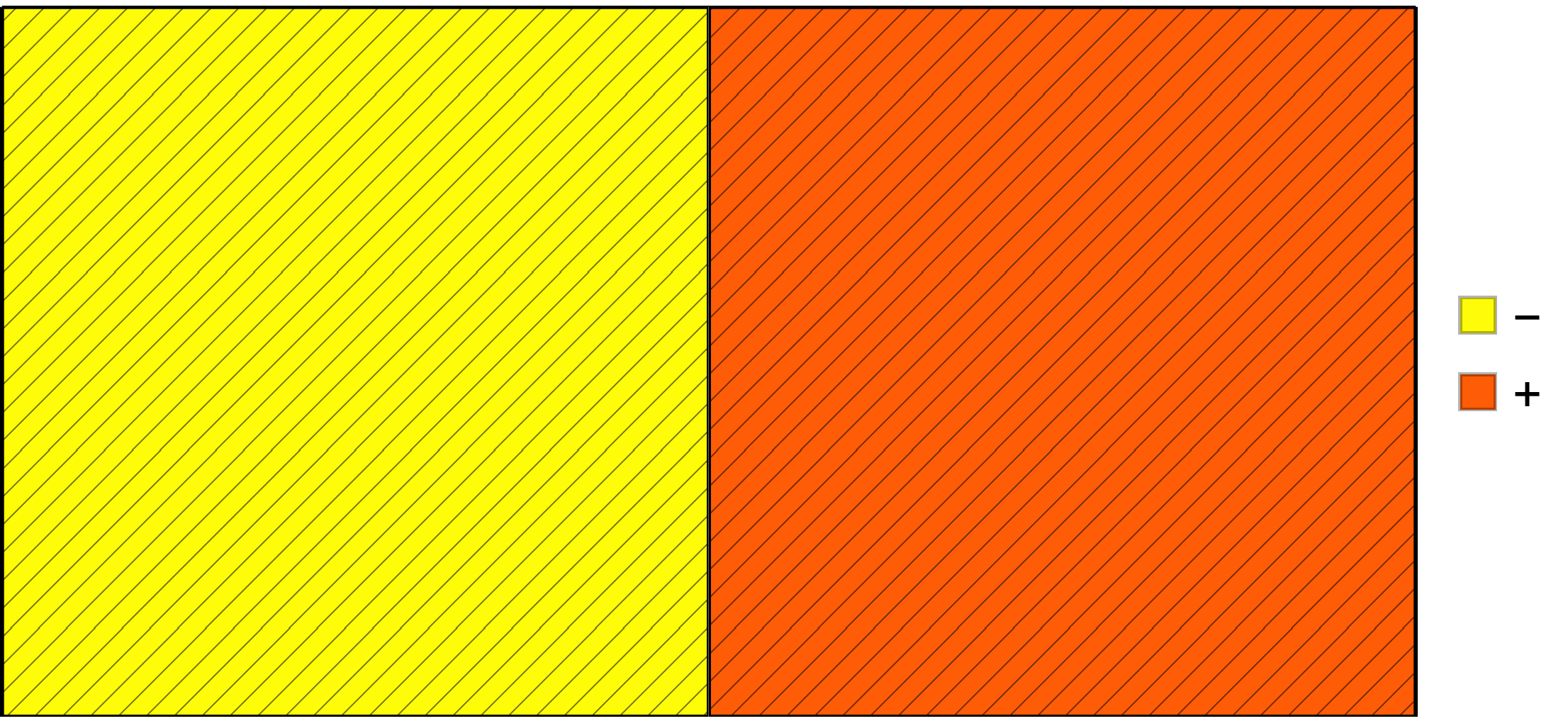}}
    \in C_0(\SCv_2(0, {\pm})),
  \]
  \[
    \act{1}
    =
    \begin{cases}
      [1/2, 1] \times [-1, 1] & (\colc) \\
      [0, 1/2] \times [-1, 1] & (\colo)
    \end{cases}
    =
    \raisebox{-.5\height}{\includegraphics[width=12em]{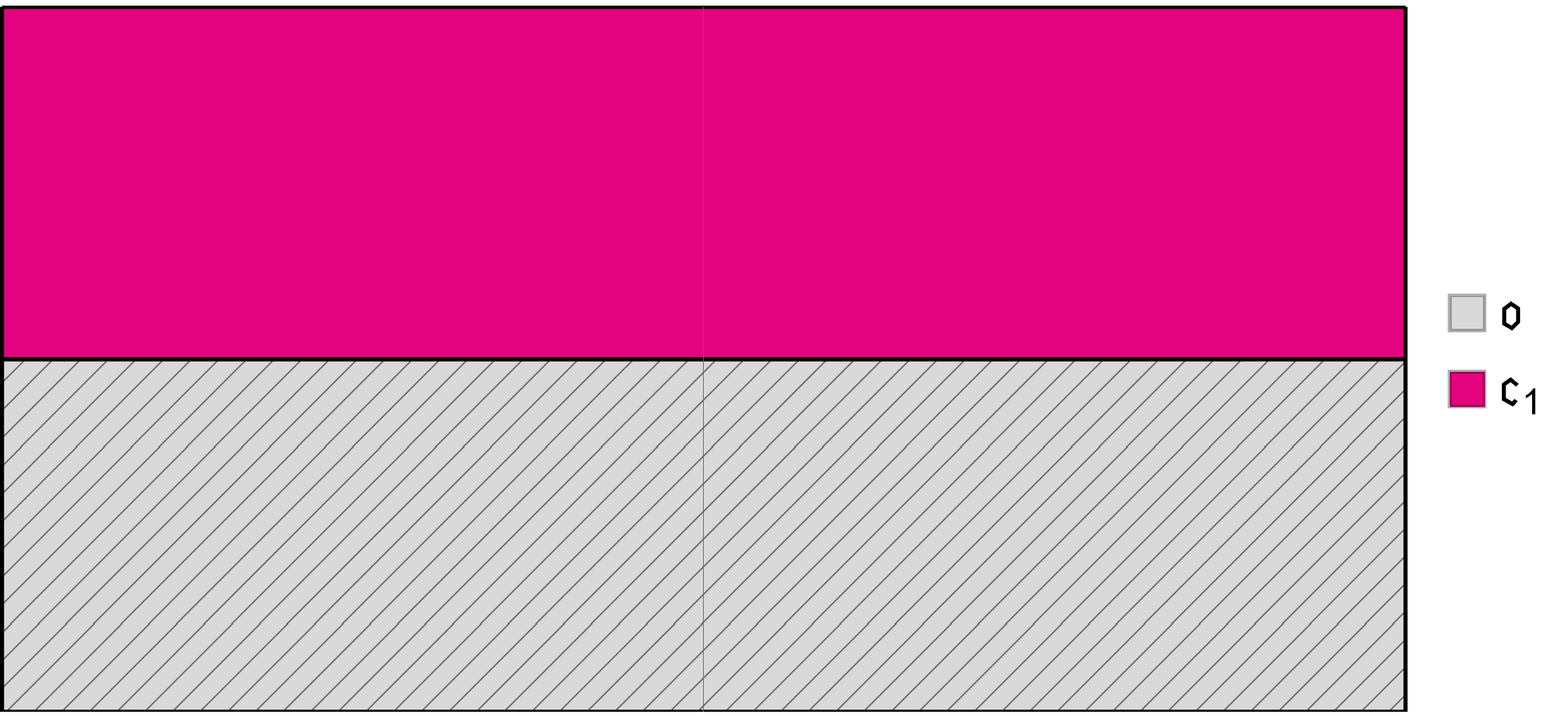}}
    \in C_0(\SCv_2(1, {\pm}^0)).
  \]
\end{definition}

Recall that we use cubical chains (see Section~\ref{sec background}) and that 1-chains of a space $X$ are linear combinations of paths $[-1, 1] \to X$.
We will now define two paths in $\SCv_2(1, \pm)$ that will be essential to produce the obstruction to formality.

\begin{definition}\label{def push1}
  The paths $\push[+]{1}$ and $\push[-]{1}$ in $C_1(\SCv_2(1, \pm))$ are defined by (for $-1 \leq t \leq 1$):
  \begin{align*}
    \push[+]{1}(t)
     & \coloneqq
    \begin{cases}
      [1/2, 1] \times [\min(t, 0), 1]                         & (\colc) \\
      \bigl[ 0, \max(\frac{1+t}{2}, 1/2) \bigr] \times [0, 1] & (-)     \\
      [0, 1/2] \times [-1, 0]                                 & (+)
    \end{cases}
    \\
    \push[+]{1}
     & =
    \raisebox{-.5\height}{\includegraphics[height=2cm]{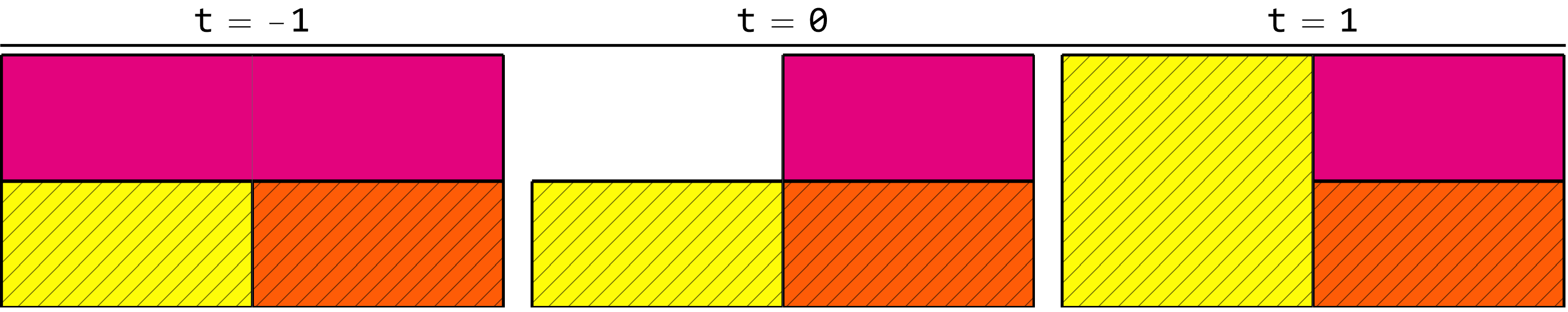}}
    \in
    C_1(\SCv_2(1, {\pm})),
  \end{align*}

  \begin{align*}
    \push[-]{1}(t)
     & \coloneqq
    \begin{cases}
      [1/2, 1] \times [-1, \max(0, -t)]                       & (\colc) \\
      [0, 1/2] \times [-1, 0]                                 & (-)     \\
      \bigl[ 0, \max(\frac{1+t}{2}, 1/2) \bigr] \times [0, 1] & (+)
    \end{cases}
    \\
    \push[-]{1}
     &
    =
    \raisebox{-.5\height}{\includegraphics[height=2cm]{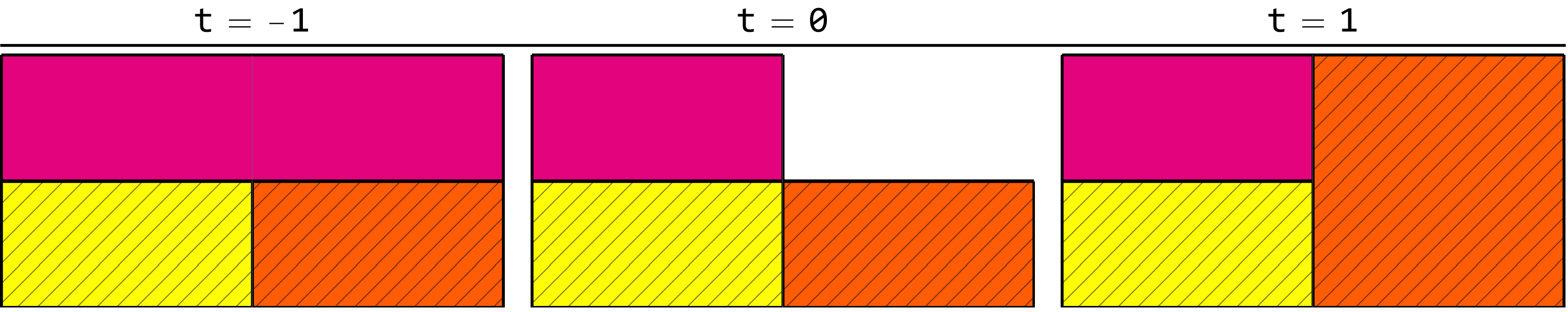}}
    \in
    C_1(\SCv_2(1, {\pm})).
  \end{align*}
\end{definition}

\begin{remark}\label{rmk beta 01}
  For $\star\in\pm$ we have that:
  \begin{align*}
    d^-_1\push[\star]{1}
     & =\act{1}\circ_{\colo}\pro{1}
     &
    d^+_1\push[\star]{1}
     & =\pro{1}\circ_\star\act{1}
  \end{align*}
\end{remark}

\begin{remark}
  In Remark~\ref{rmk shortcut}, we will give a more condensed definition of $\push[+]{1}$ and $\push[-]{1}$ that lends itself to generalization in higher dimension.
\end{remark}

\begin{definition}\label{def spec1}
  The 1-chain $\spec[+]{1} \in C_1(\SCv_2(2, \pm))$ is defined by:
  \[
    \spec[+]{1} \coloneqq
    - \push[+]{1} \circ_{-} \act{1}
    - \act{1} \circ_{\colo}\push[-]{1}
    + \act{1} \circ_{\colo} \push[+]{1}
    + \push[-]{1} \circ_{+} \act{1}   .
  \]
  We further set $\spec{1} \coloneqq \spec[+]{1}\cdot(1+(\colc_1\colc_2))\in C_1(\SCv_2(2, \pm))$.
\end{definition}

Note that despite being defined as a chain, $\spec{1}$ is the concatenation of eight paths with compatible endpoints.

\begin{equation*}
  \begin{tikzcd}[column sep = 3cm]
    {} \arrow[r, "{-\act{1} \circ_{\colo} \push[-]{1}}" ]
    & {}
    \arrow[r, "{\act{1}\circ_{\colo}\push[+]{1}}"]
    & {} \arrow[d, "{\push[-]{1}\circ_+\act{1}}"]
    \\
    {} \arrow[u, "{-\push[+]{1}\circ_-\act{1}}" ]
    & {}
    & {} \arrow[d, "{-\push[+]{1}\circ_-\act{1}\cdot(\colc_1 \colc_2)}"] \\
    {} \arrow[u, "{\push[-]{1}\circ_+\act{1}\cdot(\colc_1 \colc_2)}"]
    & {}
    \arrow[l, "{\act{1}\circ_{\colo}\push[+]{1}\cdot(\colc_1 \colc_2)}"]
    & {} \arrow[l, "{-\act{1}\circ_{\colo}\push[-]{1}\cdot(\colc_1 \colc_2)}"]
  \end{tikzcd}
\end{equation*}

Graphically, the path $\spec[+]{1}$ can be represented by Figure~\ref{fig:eta}.

\begin{figure}[htbp]
  \centering
  \includegraphics[width=\linewidth]{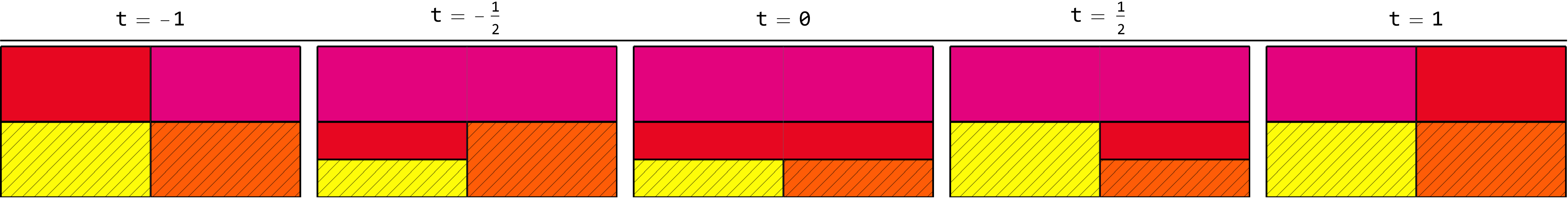}
  \caption{The 1-chain $\spec[+]{1} \in C_1 \SCv_2(2, \pm)$.}
  \label{fig:eta}
\end{figure}

\begin{lemma}
  The path $\spec{1}$ is closed, and thus $ d \spec{1} = 0$ in the chain complex of $\SCv_2(2, \pm)$.
\end{lemma}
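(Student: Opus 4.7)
The plan is to compute $d\spec{1}$ directly via the Leibniz rule and to check that the resulting terms cancel in pairs. Since $\act{1}$ is a $0$-chain with vanishing differential, only the boundaries of $\push[\pm]{1}$ contribute, and Remark~\ref{rmk beta 01} gives
\[
  d\push[\star]{1} = \pro{1} \circ_{\star} \act{1} - \act{1} \circ_{\colo} \pro{1} \qquad (\star \in {\pm}).
\]
Expanding $d$ across the four summands defining $\spec[+]{1}$ thus produces a signed sum of exactly eight $0$-chains in $\SCv_2(2,{\pm})$.

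I would then dispose of six of these eight terms. Two terms of the form $\act{1} \circ_{\colo} (\act{1} \circ_{\colo} \pro{1})$ arise with opposite signs (one from expanding $-\act{1} \circ_{\colo} d\push[-]{1}$, one from $+\act{1} \circ_{\colo} d\push[+]{1}$) and cancel directly. The remaining four cancellations are instances of operadic associativity: for $\star \in {\pm}$,
\[
  (\act{1} \circ_{\colo} \pro{1}) \circ_{\star} \act{1} = \act{1} \circ_{\colo} (\pro{1} \circ_{\star} \act{1}),
\]
and the two sides of each such identity appear in the expansion with opposite signs. What survives is
\[
  d\spec[+]{1} = -(\pro{1} \circ_{+} \act{1}) \circ_{-} \act{1} + (\pro{1} \circ_{-} \act{1}) \circ_{+} \act{1}.
\]

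The last step will exploit the factor $(1 + (\colc_1 \colc_2))$ built into the definition of $\spec{1}$. Geometrically, both residual summands represent the same configuration, namely two copies of $\act{1}$ stacked vertically inside $[0,1] \times [-1,1]$; they differ only in the ordering of the two closed inputs. In $(\pro{1} \circ_{+} \act{1}) \circ_{-} \act{1}$ the closed input of the upper copy is labeled $\colc_1$ and that of the lower copy $\colc_2$, whereas in $(\pro{1} \circ_{-} \act{1}) \circ_{+} \act{1}$ these labels are exchanged. Hence the transposition $(\colc_1 \colc_2)$ swaps the two summands, so that $d\spec[+]{1} \cdot (\colc_1 \colc_2) = -d\spec[+]{1}$ and
\[
  d\spec{1} = d\spec[+]{1} + d\spec[+]{1} \cdot (\colc_1 \colc_2) = 0.
\]

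The only subtle point I anticipate is the bookkeeping of the closed-input indices through two nested partial compositions, which is needed to verify that $(\colc_1 \colc_2)$ really does swap the two surviving operations with the correct sign; everything else is a mechanical application of Leibniz and operadic associativity.
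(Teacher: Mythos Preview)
Your proposal is correct and is essentially the paper's own argument spelled out in full: the paper's one-line proof simply cites Remark~\ref{rmk beta 01} and relies on the preceding diagram showing that the eight paths concatenate into a closed loop, which is exactly the cancellation pattern you compute algebraically. Your explicit bookkeeping of the eight boundary terms, the two direct cancellations, the four associativity cancellations, and the final swap under $(\colc_1\colc_2)$ is precisely what underlies that diagram.
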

\begin{proof}
  This follows directly from Remark~\ref{rmk beta 01}.
\end{proof}

It is not hard to see that the above closed path is related to the ``loop'' $\loo{1}$ in $\C_2(2)$:
\begin{definition}\label{def loo1}
  The path $\loo[+]{1} \in C_1 \C_2(2)$ is defined as
  \begin{multline*}
    \loo[+]{1}(t)
    \coloneqq
    \Biggl\{
    \Bigl[\frac{1-\lvert 2t\rvert}{4},\frac{3-\lvert 2t\rvert}{4}\Bigr]
    \times
    \Bigl[\frac{-1-2t}{4},\frac{1-2t}{4}\Bigr]
    ,
    \\
    \Bigl[\frac{-3+\lvert 2t\rvert}{4},\frac{-1+\lvert 2t\rvert}{4}\Bigr]
    \times
    \Bigl[\frac{-1+2t}{4},\frac{1+2t}{4}\Bigr]
    \Biggr\}
  \end{multline*}
  We set $\loo{1} \coloneqq \loo[+]{1} \cdot (1+(\colc_1\colc_2))$.
\end{definition}

\begin{remark}
  For $t=\pm 1$ we have $\loo[+]{1}(t)=(\loo[+]{1}\cdot(\colc_1\colc_2))(-t)$, so $\loo{1}$ is closed.
\end{remark}

\begin{lemma}\label{lem gam1}
  The chains $\spec{1}$ and $\act{1}(\loo{1},\pro{1})$ are homologous, i.e.,
  $$
    \exists \gam{1}\in C_2(\SCv_2(2,\pm))
    \text{ such that }
    d\gam{1}=\spec{1}-\act{1}(\loo{1},\pro{1})
  $$
\end{lemma}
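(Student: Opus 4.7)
The plan is to construct $\gam{1}$ explicitly as a 2-chain in $C_2(\SCv_2(2, \pm))$ whose boundary equals $\spec{1} - \act{1}(\loo{1}, \pro{1})$. Both 1-cycles morally realize the same ``swap'' of the two closed inputs while keeping the open inputs arranged on the left in the pattern of $\pro{1}$: the loop $\act{1}(\loo{1}, \pro{1})$ performs the half-turn directly inside the $\act{1}$-region via the rotation $\loo{1}$, whereas $\spec{1}$ performs the same half-turn by routing one closed cube around the open inputs (see Figure~\ref{fig:eta}). Since both cycles are closed and represent the same generator of $H_1(\SCv_2(2, \pm))$, they must differ by a boundary; the task is to write that boundary explicitly.

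The construction I would use is a signed sum of eight 2-cubes, one for each of the eight 1-cube constituents of $\spec{1}$. For each constituent --- of the form $\pm\push[\star]{1}\circ_\bullet \act{1}$ or $\pm\act{1}\circ_\colo\push[\star]{1}$, possibly composed with the action of $(\colc_1\colc_2)$ --- I would exhibit a continuous two-parameter family of configurations in $\SCv_2(2, \pm)$, viewed as a 2-cube, whose $d_1^-$-face recovers the given 1-cube and whose $d_1^+$-face traces the corresponding one-eighth arc of $\act{1}(\loo{1}, \pro{1})$. The $d_2^{\pm}$-faces would be arranged to coincide (with opposite signs) between adjacent 2-cubes at the junctions of $\spec{1}$, so that they cancel in the alternating sum. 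Concretely, the second parameter interpolates the moving closed cube from its detour trajectory (dictated by $\push[\star]{1}$) toward the rotational trajectory (dictated by $\loo[+]{1}$ inside $\act{1}$), while the open cubes are contracted into the positions they occupy in $\pro{1}$.

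Taking $\gam{1}$ to be this signed sum of eight 2-cubes, with the signs inherited from Definition~\ref{def spec1} and with $(\colc_1\colc_2)$ applied to the last four as in the definition of $\spec{1}$, the boundary formula $d = \sum_{i=1}^{2} (-1)^{i}(d_i^- - d_i^+)$ yields exactly $\spec{1} - \act{1}(\loo{1}, \pro{1})$: the $d_1^\pm$-faces reconstitute both 1-cycles (with the appropriate signs), while the $d_2^\pm$-faces cancel in pairs.

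The main obstacle will be the bookkeeping needed to verify that the junctions of the eight 2-cubes match and that the cancellations work out with the correct signs; this is delicate because the eight paths of $\spec{1}$ involve different operadic compositions and different embeddings of the moving cube in the ambient square. The cubical formalism of Section~\ref{sec cubical} is well-suited to this task, since homotopies between paths are naturally 2-cubes (rather than the more awkward 2-simplices one would get simplicially), and one may in fact express $\gam{1}$ succinctly in terms of the cubical $\omega$-groupoid operations $\Gamma_j$ and $+_i$ applied to the building blocks $\push[\star]{1}$ and $\loo[+]{1}$.
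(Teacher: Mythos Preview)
Your outline is viable, but the construction in the paper is organized differently.  Rather than eight separate $2$-cubes, the paper first reparametrizes the concatenation $\spec[+]{1}$ as a single continuous map $[-1,1]\to\SCv_2(2,\pm)$ and then writes down one explicit $2$-cube $\gam[+]{1}:[-1,1]^2\to\SCv_2(2,\pm)$ with $d_2^-\gam[+]{1}=\spec[+]{1}$ and $d_2^+\gam[+]{1}=\act{1}(\loo[+]{1},\pro{1})$, finally setting $\gam{1}=\gam[+]{1}+\gam[+]{1}\cdot(\colc_1\colc_2)$.  The formula is a two-stage affine homotopy: for $t_2\in[-1,0]$ the closed cubes are held fixed while the open cubes are retracted in the $0$th coordinate behind the hyperplane $\{\tfrac14\}\times[-1,1]$; for $t_2\in[0,1]$ the closed cubes are slid linearly from their $\spec[+]{1}$-positions to their $\act{1}(\loo[+]{1},\pro{1})$-positions.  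Because $\gam[+]{1}(-t_1,t_2)=\gam[+]{1}(t_1,t_2)\cdot(\colc_1\colc_2)$ on $\{-1,1\}\times[-1,1]$, the $d_1^\pm$-faces cancel automatically after symmetrizing, so no piecewise junction bookkeeping is needed.

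The substantive point your proposal omits is the verification that the homotopy actually lands in $\SCv_2(2,\pm)$, i.e.\ that the five cubes keep pairwise disjoint interiors throughout.  A naive straight-line interpolation from a $\push[\star]{1}$-type configuration to the corresponding arc of $\loo[+]{1}$ can fail: the moving closed cube may collide with an open cube mid-homotopy.  The paper's two-stage trick is precisely designed to make this check easy (open and closed cubes are first separated by a hyperplane, and then the two closed cubes are shown to stay on opposite sides of a line depending on the region of $t_1$).  This disjointness verification is the real technical content of the lemma; by contrast, the cancellation of side faces that you flag as the ``main obstacle'' becomes automatic once a single global formula is used.  If you pursue the eight-piece route you will still need an analogue of this two-stage separation argument inside each piece.
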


\begin{proof}
  We may define, by abuse of notation, the function
  \begin{gather*}
    \spec[+]{1}:[-1,1]\rightarrow \SCv_2(2,\pm)\\
    \spec[+]{1}(t) \coloneqq \begin{cases}
      \act{1}\circ_{\colo}\push[\sgn(t)]{1}(\sgn(t)6t-1),    & \lvert t\rvert \leq 1/3  ;     \\
      \push[-\sgn(t)]{1}(\sgn(t)6t-3)\circ_{\sgn(t)}\act{1}, & 1/3\leq\lvert t\rvert\leq 2/3; \\
      \pro{1}\begin{cases}
               \circ_{-\sgn(t)}\act{1} \\
               \circ_{\sgn(t)}\act{1}
             \end{cases},                             & 2/3\leq \lvert t\rvert.
    \end{cases}
  \end{gather*}
  Due to the equations in Remark~\ref{rmk beta 01} the image of the points where this formula is ambiguous, i.e. the integer multiples of $\frac{1}{3}$, are uniquely determined when we extend by continuity on the unambiguous points. This function, as a chain, is homologous to our previous definition.

  Let $\gam[+]{1}\in C_2(\SCv_2(2,\pm))$ be defined as
  \begin{align*}
    \gam[+]{1}(t)_{\colc_i}
     & \coloneqq\min(1,1-t_2)\spec[+]{n}(t_1)_{\colc_i}+\max(0,t_2)\act{1}(\loo[+]{1}(t_1),\pro{1}), \\
    \gam[+]{1}(t)_\star
     & \coloneqq\max(-t_2,0)\spec[+]{1}(t_1)_\star+\left[0,\frac{t_2+1}{4}\right]\times\left[\frac{\star-1}{2},\frac{\star+1}{2}\right],
  \end{align*}
  for $\star\in \pm$ and $i\in \underline 2$. The following conditions guarantee that for all $t\in [-1,1]^2$ the cubes in the configurations $\gam[+]{1}(t)$ have pairwise disjoint interiors, and so are indeed elements of $\SCv(2,\pm)$:
  \begin{enumerate}
    \item In the first half of the homotopy the closed cubes stay constant and all open cubes are deformed in the first coordinate so that halfway through the homotopy they are separated from the closed cubes by the hyperplane $\{\frac{1}{4}\}\times [-1,1]$;

    \item If $t\in\left[-\frac{1}{2},\frac{1}{2}\right]$ then $\spec[+]{1}(t)_{\colc_1}$ and $\spec[+]{1}(t)_{\colc_2}$ are separated by $\left\{\frac{1}{2}\right\}\times[-1,1]$;

    \item If $t\in\left[-\frac{1}{2},\frac{1}{2}\right]$ then $\act{1}(\loo[+]{1},\pro{1})(t)_{\colc_1}$ and $\act{1}(\loo[+]{1},\pro{1})(t)_{\colc_2}$ are separated by $\left\{\frac{3}{4}\right\}\times[-1,1]$;

    \item If $\lvert t\rvert\geq \frac{1}{2}$ then $\spec[+]{1}(t)_{\colc_1}$ and $\act{1}(\loo[+]{1},\pro{1})(t)_{\colc_1}$ are in the same side of the line $[0,1]\times\{0\}$, and $\spec[+]{1}(t)_{\colc_2}$ and $\act{1}(\loo[+]{1},\pro{1})(t)_{\colc_2}$ are in the other.
  \end{enumerate}
  Note that
  $$
    d^-_{2}\gam[+]{1}=\spec[+]{1}
    ,\qquad
    d^+_{2}\gam[+]{1}=\act{1}(\loo[+]{1},\pro{1}).
  $$
  By construction for all $t\in\{-1,1\}\times[-1,1]$ we have $\gam[+]{1}(t)=\gam[+]{1}(-t_1,t_2)\cdot (\colc_1\colc_2)$, so for $\gam{1} \coloneqq \gam[+]{1}+\gam[+]{1}\cdot(\colc_1\colc_2)$ we have
  \begin{equation*}
    d\gam{1}
    =
    -d^-_{1}\gam{1}+d^+_{1}\gam{1}+d^-_{2}\gam{1}-d^+_{2}\gam{1}
    =\spec{1}-\act{1}(\loo{1},\pro{1})
    \qedhere
  \end{equation*}
\end{proof}

\subsection{Proof of the non-formality of \texorpdfstring{$\SCv_2$}{SCvor2}}
\label{sec proof non formal 2}

The chain $\spec{1}$ defined above gives rise to something similar to a nonzero type-II Massey product $\langle \pro{2}; \act{2}, \act{2} \rangle_{\mathrm{II}} \cdot (1 + (\colc_1 \colc_2))$ with the notation of~\cite{Livernet2015}.
Note however that, due to the symmetric group actions appearing outside of the Massey product, it seems that this does not define a Massey product in the proper sense.
Nevertheless, the proofs of \textcite{Livernet2015} still work with slight adaptations and we obtain:

\begin{lemma}[{\cite[Appendix A]{Vieira2018a}}]\label{Lemma NonForm Dim 2}
  If
  \begin{align*}
    \pro*{1}
             & \in C_0(\SCv_2(0, {\pm})),
             & \act*{1}
             & \in C_0(\SCv_2(1,1)),
             & \loo*{1}
             & \in C_1(\mathcal C^2(2)), \\
    \pro-{1} & \in C_1(\SCv_2(0, {\pm}))
             & \act-{1}                   & \in C_1(\SCv_2(1,1))
    ,        & \loo-{1}                   & \in C_2(\mathcal C^2(2))
  \end{align*}
  are such that
  \begin{align*}
    \pro*{1} & =\pro{1}+d \pro-{1}
    ,        & \act*{1}            & =\act{1} + d \act-{1}
    ,        & \loo*{1}            & =\loo{1} + d \loo-{1},
  \end{align*}
  then there are chains
  \begin{align*}
    \push*[-]{1}, \push*[+]{1}
     & \in C_1(\SCv_2(1,\pm)),
     & \gam*{1}
     & \in C_2(\SCv_2(2,\pm))
  \end{align*}
  such that, for $\spec*[]{1}
    \in C_1(\SCv_2(2,\pm))$
  defined from the $\push*[\star]{1}$ and $\act*{1}$ as in definition \ref{def spec1}, we have
  \begin{align*}
    d \push*[\star]{1}
     & =\pro*{1}\circ_\star\act*{1}-\act*{1}\circ_{\colo}\pro*{1} ;
    \\
    d\gam*{1}
     & =\spec*[]{1}-\act*{1}(\loo*{1},\pro*{1}).
  \end{align*}
\end{lemma}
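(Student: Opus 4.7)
The plan is to construct $\push*[\star]{1}$ and $\gam*{1}$ by perturbing $\push[\star]{1}$ and $\gam{1}$ with explicit correction terms built from the chain homotopies $\pro-{1}, \act-{1}, \loo-{1}$, using the Leibniz rule for cubical compositions throughout.

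First I would construct $\push*[\star]{1}$. Substituting $\pro*{1} = \pro{1} + d\pro-{1}$ and $\act*{1} = \act{1} + d\act-{1}$ into $\pro*{1} \circ_\star \act*{1} - \act*{1} \circ_\colo \pro*{1}$ and expanding via the cubical Leibniz rule $d(a \times b) = (da) \times b + (-1)^{|a|} a \times (db)$, every cross-term is a boundary because $d\pro{1} = d\act{1} = 0$; in particular the quadratic term satisfies $d\pro-{1} \circ_\star d\act-{1} = d(\pro-{1} \circ_\star d\act-{1})$ thanks to $d^2 = 0$. Collecting yields
\begin{equation*}
  \pro*{1} \circ_\star \act*{1} - \act*{1} \circ_\colo \pro*{1} = d\push[\star]{1} + d\bigl( \pro{1} \circ_\star \act-{1} + \pro-{1} \circ_\star \act*{1} - \act{1} \circ_\colo \pro-{1} - \act-{1} \circ_\colo \pro*{1} \bigr),
\end{equation*}
so I would take $\push*[\star]{1}$ to be $\push[\star]{1}$ plus the parenthesized chain.

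Next I would construct $\gam*{1}$ starting from $d\gam{1} = \spec{1} - \act{1}(\loo{1}, \pro{1})$ (Lemma~\ref{lem gam1}). Unfolding Definition~\ref{def spec1} and inserting the formula for $\push*[\star]{1} - \push[\star]{1}$ derived above, each of the four summands of $\spec*[+]{1} - \spec[+]{1}$ unfolds via Leibniz into an explicit boundary involving $\pro, \act, \pro-, \act-$ and their starred versions; symmetrizing by $(1 + (\colc_1\colc_2))$ gives $\spec*[]{1} - \spec{1} = dA$ for an explicit chain $A$. Similarly, expanding $\act*{1}(\loo*{1}, \pro*{1}) - \act{1}(\loo{1}, \pro{1})$ via Leibniz in all three arguments (using $\loo*{1} = \loo{1} + d\loo-{1}$) yields an explicit $B$ with $\act*{1}(\loo*{1}, \pro*{1}) - \act{1}(\loo{1}, \pro{1}) = dB$. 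Setting $\gam*{1} \coloneqq \gam{1} + A - B$ then produces the required identity $d\gam*{1} = \spec*[]{1} - \act*{1}(\loo*{1}, \pro*{1})$.

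The main obstacle will be the bookkeeping of signs and operadic input labels. Cubical compositions of chains are implemented via the Alexander--Whitney approximation $\triangle$, and the Leibniz rule must be carefully tracked through each operadic slot as well as through the $\Sigma_2$-symmetrization $(1 + (\colc_1\colc_2))$. Fortunately, no higher-order correction is needed: every quadratic boundary term $d\alpha \circ d\beta$ in two chain homotopies equals $d(\alpha \circ d\beta)$ by $d^2 = 0$, so the perturbation procedure terminates at first order in the barred chains.
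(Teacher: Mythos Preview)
Your construction of $\push*[\star]{1}$ is correct and close in spirit to the paper's, though the paper packages the correction more compactly via its cubical ``diagonal'' operator $\Delta$, writing
\[
\push*[\star]{1} = \push[\star]{1} + \Delta\bigl(-\act-{1}\circ_{\colo}\pro-{1} + \pro-{1}\circ_{\star}\act-{1}\bigr)
\]
instead of your four Leibniz correction terms. Either formula satisfies the required boundary identity.

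Your argument for $\gam*{1}$, however, has a genuine gap. You assert that ``each of the four summands of $\spec*[+]{1} - \spec[+]{1}$ unfolds via Leibniz into an explicit boundary,'' but this is false: a single summand such as $\push*[+]{1}\circ_{-}\act*{1} - \push[+]{1}\circ_{-}\act{1}$ is not even a cycle (its boundary is $d\push*[+]{1}\circ_{-}\act*{1} - d\push[+]{1}\circ_{-}\act{1} \neq 0$), hence cannot be a boundary. Only the full symmetrized chain $\spec*{1} - \spec{1}$ is closed, and exhibiting a primitive for it is not a matter of termwise Leibniz. The difficulty is that $\push*[\star]{1}$ and $\push[\star]{1}$ differ by a $1$-chain whose boundary is nonzero, so there is no $2$-chain that simply ``interpolates'' between them; one must build a $2$-chain whose boundary mixes the correction for $\push$ with the correction for $\act$ in a coordinated way.

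This is exactly what the paper does: it introduces auxiliary $2$-chains $\push-[\star]{1}$ (built from $\push[\star]{1}$ and the barred data using the connection-type map $\Psi_1$) and then sets
\[
\gam*{1} = \gam{1} + \Delta\bigl(-\push-[+]{1}\circ_{-}\act-{1} - \act-{1}\circ_{\colo}\push-[-]{1} + \act-{1}\circ_{\colo}\push-[+]{1} + \push-[-]{1}\circ_{+}\act-{1}\bigr)\cdot(1+(\colc_1\colc_2)) - \Delta\bigl(\act-{1}(\loo-{1},\pro-{1})\bigr).
\]
The $\Delta$-construction here is essential: it uses the shared last cube coordinate as a common deformation parameter, so the resulting $2$-chain has one face equal to $\spec{1}$ and the opposite face equal to $\spec*{1}$. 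Your purely Leibniz approach would need an analogous auxiliary $2$-chain playing the role of $\push-[\star]{1}$; without it the construction of $A$ does not go through, and in particular your claim that ``the perturbation procedure terminates at first order in the barred chains'' is not justified.
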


\begin{proof}
  The chains
  \begin{align*}
    \push*[\star]{1} & = \push[\star]{1}
    +\Delta(-\act-{1}\circ_{\colo}\pro-{1}
    +\pro-{1}\circ_{\star}\act-{1})
    ; \\
    \push-[\star]{1} & =\push[\star]{1}\pr_1+\Delta(-\act-{1}\circ_{\colo}\pro-{1}
    +\pro-{1}\circ_{\star}\act-{1})\Psi_1
    \\
    \gam*{1}
                     & =\gam{1}
    +\Delta(-\push-[+]{1}\circ_-\act-{1}-\act-{1}\circ_\colo \push-[-]{1}+\act-{1}\circ_\colo \push-[+]{1}+\push-[-]{1}\circ_+\act-{1})\cdot(1+(\colc_1\colc_2))  -\Delta(\act-{1}(\loo-{1},\pro-{1}))
  \end{align*}
  satisfy the desired relations.
\end{proof}

\begin{theorem}[{\cite[Appendix A]{Vieira2018a}}]\label{NonFormality n=1}
  The operad $\SCv_2$ is not formal.
\end{theorem}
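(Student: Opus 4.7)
The plan is to derive a contradiction from a Massey-product-style obstruction, building directly on Lemma~\ref{Lemma NonForm Dim 2} and the nontriviality of the class $[\act{1}(\loo{1},\pro{1})]$. Assume for contradiction that $\SCv_2$ is formal. Combining this with Corollary~\ref{cor eqv cub operads} and standard model-category arguments, I would replace the zigzag of quasi-isomorphisms witnessing formality by a span $C_*\SCv_2 \xleftarrow{\phi} \opQ \xrightarrow{\psi} H_*\SCv_2$ of quasi-isomorphisms of relative dg-operads with $\opQ$ cofibrant.

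First, I would lift the key homology classes along $\psi$ to cycles $\tilde\mu, \tilde\alpha, \tilde\ell \in \opQ$ with $\psi(\tilde\mu)=[\pro{1}]$, $\psi(\tilde\alpha)=[\act{1}]$, $\psi(\tilde\ell)=[\loo{1}]$, and set $\pro*{1}:=\phi(\tilde\mu)$, $\act*{1}:=\phi(\tilde\alpha)$, $\loo*{1}:=\phi(\tilde\ell)$. Since $\SCv_2(1,\pm)$ is path-connected, the identity $[\pro{1}]\circ_\star[\act{1}]=[\act{1}]\circ_\colo[\pro{1}]$ holds in $H_0$; hence in $\opQ$ the cycle $\tilde\mu\circ_\star\tilde\alpha-\tilde\alpha\circ_\colo\tilde\mu$ bounds some $\tilde\beta^\star \in \opQ(1,\pm)_1$. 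Its image $\psi(\tilde\beta^\star)$ is automatically a $1$-cycle (the target has zero differential); by subtracting from $\tilde\beta^\star$ a cycle of $\opQ$ with matching $\psi$-image—which exists since $\psi$ is a homology iso and the target is concentrated in cycles—I would arrange $\psi(\tilde\beta^\star)=0$ without disturbing the boundary relation. Setting $\push*[\star]{1}:=\phi(\tilde\beta^\star)$ now meets all the hypotheses of Lemma~\ref{Lemma NonForm Dim 2}.

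Applying that lemma produces $\spec*[]{1}$ and $\gam*{1}$ with $d\gam*{1}=\spec*[]{1}-\act*{1}(\loo*{1},\pro*{1})$. By naturality of the defining formula, $\spec*[]{1}=\phi(\tilde\eta)$, where $\tilde\eta \in \opQ(2,\pm)_1$ is formed from $\tilde\beta^\pm$ and $\tilde\alpha$ via the formula of Definition~\ref{def spec1} (closedness of $\tilde\eta$ follows as in Remark~\ref{rmk beta 01}). Every summand of $\tilde\eta$ contains a factor $\tilde\beta^\pm$, so $\psi(\tilde\eta)=0$; since $\psi$ is a quasi-iso onto a complex with vanishing differential, $\tilde\eta$ itself must be exact in $\opQ$, making $\spec*[]{1}$ exact in $C_*\SCv_2(2,\pm)$. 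Combined with $d\gam*{1}=\spec*[]{1}-\act*{1}(\loo*{1},\pro*{1})$, this forces $[\act*{1}(\loo*{1},\pro*{1})]=0$ in $H_1\SCv_2(2,\pm)$.

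The contradiction then comes from showing this class is actually nonzero: since $\act*{1},\loo*{1},\pro*{1}$ are homologous to their originals, it equals $[\act{1}(\loo{1},\pro{1})]$, and the continuous forgetful map $\SCv_2(2,\pm)\to\C_2(2)$ that erases the open squares sends this to the standard braid generator of $H_1\C_2(2)\cong\K$, which is nonzero. The delicate step will be the indeterminacy adjustment in paragraph two: simultaneously fixing $\psi(\tilde\beta^\star)=0$ and retaining the boundary relation is precisely the operadic analogue of the classical observation that Massey products vanish modulo indeterminacy in any formal dga, and it is what makes the cycle $\tilde\eta$ map to zero under $\psi$ while still witnessing a nonzero class through $\phi$.
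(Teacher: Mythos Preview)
Your overall strategy matches the paper's, but there is a genuine gap in how you invoke Lemma~\ref{Lemma NonForm Dim 2}. That lemma does not accept $\push*[\star]{1}$ as input: from $\pro*{1}, \act*{1}, \loo*{1}$ and their bounding chains alone it \emph{constructs} specific chains $\push*[\star]{1} = \push[\star]{1} + \Delta(\cdots)$ together with a matching $\gam*{1}$. Your assignment $\push*[\star]{1} := \phi(\tilde\beta^\star)$ has no reason to coincide with the lemma's output, so the assertion ``by naturality $\spec*[]{1} = \phi(\tilde\eta)$'' is unjustified---the $\spec*[]{1}$ produced by the lemma is built from its own $\push*[\star]{1}$, not from yours. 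The paper closes exactly this gap: since $H_1(\SCv_2(1,\pm)) = 0$, the cycle $\phi\push[\star]{Q} - \push*[\star]{1}$ is a boundary $d\push-[\star]{1}$, and from the $\push-[\star]{1}$ one assembles an explicit $\spec-{1}$ with $d\spec-{1} = \phi\spec{Q} - \spec*[]{1}$. This bridging step is what lets one compare the $\opQ$-side cycle with the chain produced by the lemma, and you have omitted it. (As an aside, the same vanishing $H_1(\SCv_2(1,\pm)) = 0$ already forces $\psi(\tilde\beta^\star) = 0$ for degree reasons, so your indeterminacy adjustment is unnecessary.)

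A second, smaller issue: you lift $\tilde\mu, \tilde\alpha, \tilde\ell$ along $\psi$, but the lemma requires $\pro*{1} = \phi(\tilde\mu)$ to be homologous to $\pro{1}$, and nothing guarantees that $\phi_* \circ \psi_*^{-1}$ acts as the identity on $H_0$. The paper avoids this by lifting along $\phi$ instead and absorbing the resulting uncertainty on the $\psi$-side into nonzero scalars $\lambda_\mu, \lambda_\alpha, \lambda_\ell$, which drop out of the final contradiction $\lambda\lambda_\alpha\lambda_\ell\lambda_\mu[\act{1}(\loo{1},\pro{1})] = 0$.
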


\begin{proof}
  Assume by contradiction there there is a relative dg operad $\mathcal{Q}$ and a span of quasi-isomorphisms
  $$\begin{tikzcd}
      C_\bullet(\SCv_2) & \mathcal{Q} \arrow[l, "\sim", "\phi" swap] \arrow[r, "\sim" swap, "\psi"] & H_\bullet(\SCv_2)
    \end{tikzcd}
    .$$
  Since $\phi$ is a quasi-isomorphism there are cycles
  $$
    \pro{Q} \in \mathcal{Q}(0, {\pm})_0,
    \qquad \act{Q} \in \mathcal{Q}(1,1)_0,
    \qquad \loo{Q} \in \mathcal{Q}(2)_1
  $$
  and
  $$
    \pro-{1} \in C_1(\SCv_2(0, {\pm})),
    \qquad \act-{1} \in C_1(\SCv_2(1,1)),
    \qquad \loo-{1} \in C_2(\SCv_2(2))
  $$
  such that
  $$
    \phi \pro{Q} = \pro{1}+d\pro-{1}
    ,\qquad \phi \act{Q} = \act{1} + d\act-{1}
    ,\qquad \phi \loo{Q} = \loo{1} + d\loo-{1}.
  $$
  Let $\push*[-]{1}$, $\push*[+]{1}$ and $\gam*{1}$ be the chains in the conclusion of lemma \ref{Lemma NonForm Dim 2}. By the relations
  $$
    d \push[\star]{1}
    =\pro{1}\circ_\star\act{1}-\act{1}\circ_{\colo}\pro{1}
  $$
  for $\star\in \pm$ we have that
  $$
    \phi_\ast([\pro{Q} \circ_\star \act{Q}]-[\act{Q}\circ_{\colo} \pro{Q}])
    =0,
  $$
  in $H_0(\SCv_2(1,\pm))$, and there are
  $$
    \push[-]{Q},
    \push[+]{Q}
    \in \mathcal{Q}(1,\pm)_1
  $$
  such that
  $$
    d \push[\star]{Q}
    =\pro{Q} \circ_\star\act{Q} -\act{Q} \circ_{\colo} \pro{Q}.
  $$

  Applying $\phi$ and lemma \ref{Lemma NonForm Dim 2} we obtain
  \begin{align*}
    d\phi \push[\star]{Q}
    =\phi d \push[\star]{Q}
     & =\phi\pro{Q}\circ_\star\phi\act{Q}-\phi\act{Q}\circ_{\colo}\phi\pro{Q}
    =d\push*[\star]{1}.
  \end{align*}

  Define $\spec{Q} \in \opQ(2,\pm)_1$ as in Definition \ref{def spec1}, so that $d\spec{Q}=0$.
  Since the group $H_1(\SCv_2(2,\pm))$ is generated by $[\act{1}(\loo{1},\pro{1})] = [\phi\act{Q}(\phi \loo{Q},\phi\pro{Q})]$, we have that that $H_1(\mathcal{Q}(2,\pm))$ is generated by $[\act{Q}( \loo{Q},\pro{Q})]$ and that there are
  $$
    \gam{Q}\in\mathcal{Q}(2,\pm)_2,
    \qquad \lambda\in \K
  $$
  such that
  \begin{equation}\label{Eq NF dim 2}
    \spec{Q}=\lambda \act{Q}( \loo{Q},\pro{Q})+d\gam{Q}.
  \end{equation}

  Since $H_1(\SCv_2(1,\pm))=0$ there are $\push-[-]{1},\push-[+]{1}\in C_2(\SCv_2(1,\pm))$ such that
  $$
    d \push-[-]{1}=\phi\push[-]{Q} - \push*[-]{1}
    ,\qquad
    d \push-[+]{1}=\phi\push[+]{Q} - \push*[+]{1}.
  $$
  Defining
  $$
    \spec-{1} \coloneqq \Delta(-\push-[+]{1}\circ_-\act-{1} - \act-{1}\circ_\colo \push-[-]{1} + \act-{}\circ_\colo \push-[+]{1} + \push-[-]{1}\circ_+\act-{1})\cdot(1+(\colc_1\colc_2))
  $$
  we have
  $$
    d \spec-{1}=\phi\spec{Q}-\spec*[]{1}.
  $$

  Applying $\phi$ to equation \ref{Eq NF dim 2} we get
  $$
    \spec*[]{1}+d \spec-{1}
    =
    \lambda \phi\act{Q}(\phi \loo{Q},\phi\pro{Q})+d\phi\gam{Q},
  $$
  which by lemma \ref{Lemma NonForm Dim 2} gives us
  \begin{align*}
    \spec*[]{1}
     & =\lambda \phi\act{Q}(\phi \loo{Q},\phi\pro{Q}) + d(\phi \gam{Q}  - \spec-{1}) \\
     & =\phi\act{Q}(\phi \loo{Q},\phi\pro{Q})+d\gam*{1}.
  \end{align*}
  Since $[\phi\act{Q}(\phi \loo{Q},\phi\pro{Q})]$ generates $H_1(\SCv_2(2,\pm))$ we conclude that $\lambda=1$.

  For degree reasons $\psi\push[-]{Q}=\psi\push[+]{Q}=0$, thus $\psi\spec[+]{Q}=0$. Similarly $\psi\gam{Q}=0$. Since $\psi$ is a quasi-isomorphism there are
  $$
    \lambda_\mu, \lambda_\alpha,\lambda_\ell\in \K^\times
  $$
  such that
  \begin{align*}
    \psi \pro{Q} & = \lambda_\mu[\pro{1}],
                 & \psi \act{Q}            & = \lambda_\alpha[\act{1}],
                 & \psi \loo{Q}            & = \lambda_\ell [\loo{1}].
  \end{align*}

  Applying $\psi$ to \ref{Eq NF dim 2} we get
  $$
    \lambda \lambda_\alpha \lambda_\ell \lambda_\mu [\act{1}(\loo{1},\pro{1})]=0
  $$
  which implies $\lambda=0$, a contradiction.
\end{proof}

\section{Non-formality in dimension 3}
\label{sec non-formal 3}
Before moving on to the general case, let us first deal with the special case $n = 2$ of the proof.
This case allows us to draw actual pictures that illustrate the general procedure, but there is no difference in the actual proof with Section~\ref{sec non-formal all}.

\subsection{Definition of the basic squares}
\label{sec:basic-squares}

In order to build the nontrivial 2-chain that proves the non-formality of $\SCv_3$, we will need to define several kinds of basic squares.

The first kind of basic squares is similar to the basic paths defined in Section~\ref{sec:def-basic-paths}.
Just like the path $\push[+]{1}$ (resp.\ $\push[-]{1}$) were defined to ``push'' the closed input towards the open input labeled by $+$ (resp.\ $-$), we will define four operations $\push[\star_1 \star_2]{2}$, for $\star_1, \star_2 \in \pm$, that ``push'' the closed input towards corners of the unit square.

Recall from Convention~\ref{conv pm} that we denote $\pm^2 = \{++,+-,-+,--\}$.
Recall moreover the inclusions $\inc[k]{2}$ from Definition~\ref{def inc}.

\begin{definition}\label{def pro2}
  The four-fold product $\pro{2} \in \SCv(0, \pm^2)$ is defined as:
  \begin{equation}
    \pro{2} \coloneqq \inc[\{1\}]{2} \pro{1} \bigl( \inc[\{2\}]{2}\pro{1}, \inc[\{2\}]{2} \pro{1} \bigr)
    =
    \raisebox{-.5\height}{\includegraphics[width=.4\textwidth]{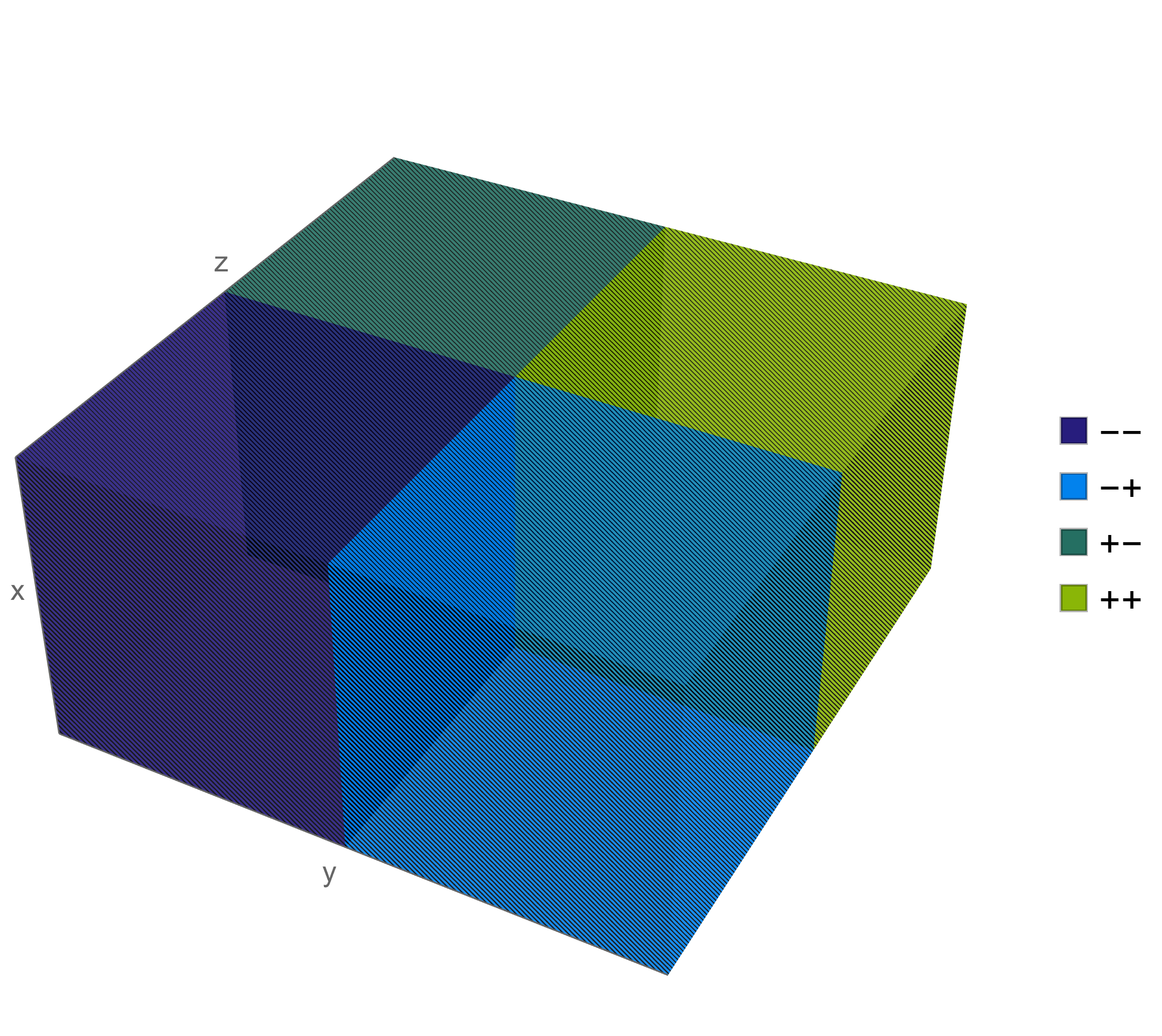}}
    .
  \end{equation}
\end{definition}

\begin{remark}
  We use Convention~\ref{conv compos pm} to find the labels of the inputs of $\pro{2}$.
  Given an index $\star = \star_1 \star_2 \in \pm^2$, the corresponding component of $\pro{2}$ is given by:
  \begin{equation}
    (\pro{2})_{\star_1 \star_2} =
    [0, 1]
    \times \Bigl[\frac{\star_1-1}{2}, \frac{\star_1+1}{2} \Bigr]
    \times \Bigl[\frac{\star_2-1}{2}, \frac{\star_2+1}{2} \Bigr].
  \end{equation}
\end{remark}

\begin{definition}\label{def act2}
  The action $\act{2} \in \SCv_3(1, \pm^0)$ is defined by:
  \begin{equation}
    \act{2} =
    \begin{cases}
      [1/2, 1] \times [-1, 1]^2
       & (\colc)  \\
      [0, 1/2] \times [-1, 1]^2
       & (\colo).
    \end{cases}
  \end{equation}
\end{definition}

We can now define the higher-dimensional generalizations of the paths $\push[\pm]{1}$ from Section~\ref{sec:def-basic-paths}.
For notational convenience, we introduce the following helpers (see Figure~\ref{fig:plot-sigma-tau}).

\begin{definition}
  For $\star \in \pm$ and $t \in [-1, 1]$, let:
  \begin{equation}
    \sigma_{\star}(t)
    \coloneqq
    \begin{cases}
      \min(0, t), & \text{if } \star = +; \\
      -1,         & \text{if } \star = -.
    \end{cases}
  \end{equation}
  Moreover, given $k \in \mathbb{N}$, $\star = (\star_1, \dots, \star_k) \in \pm^k$ and $t = (t_1, \dots, t_k) \in [-1, 1]^k$, let:
  \begin{equation}
    \tau_{\star_1, \dots, \star_k} \bigl(t_1, \dots, t_k\bigr)
    \coloneqq
    \max \Bigl( \{1/2\} \cup \Bigl\{ \frac{1+t_i}{2} \mid 1 \leq i \leq k, \, \star_i = - \Bigr\} \Bigr).
  \end{equation}
\end{definition}

\begin{figure}[htbp]
  \centering
  \subcaptionbox{Plots of $\sigma_\pm$ and $\tau_\pm$.}[.49\linewidth][c]{\includegraphics[width=\linewidth]{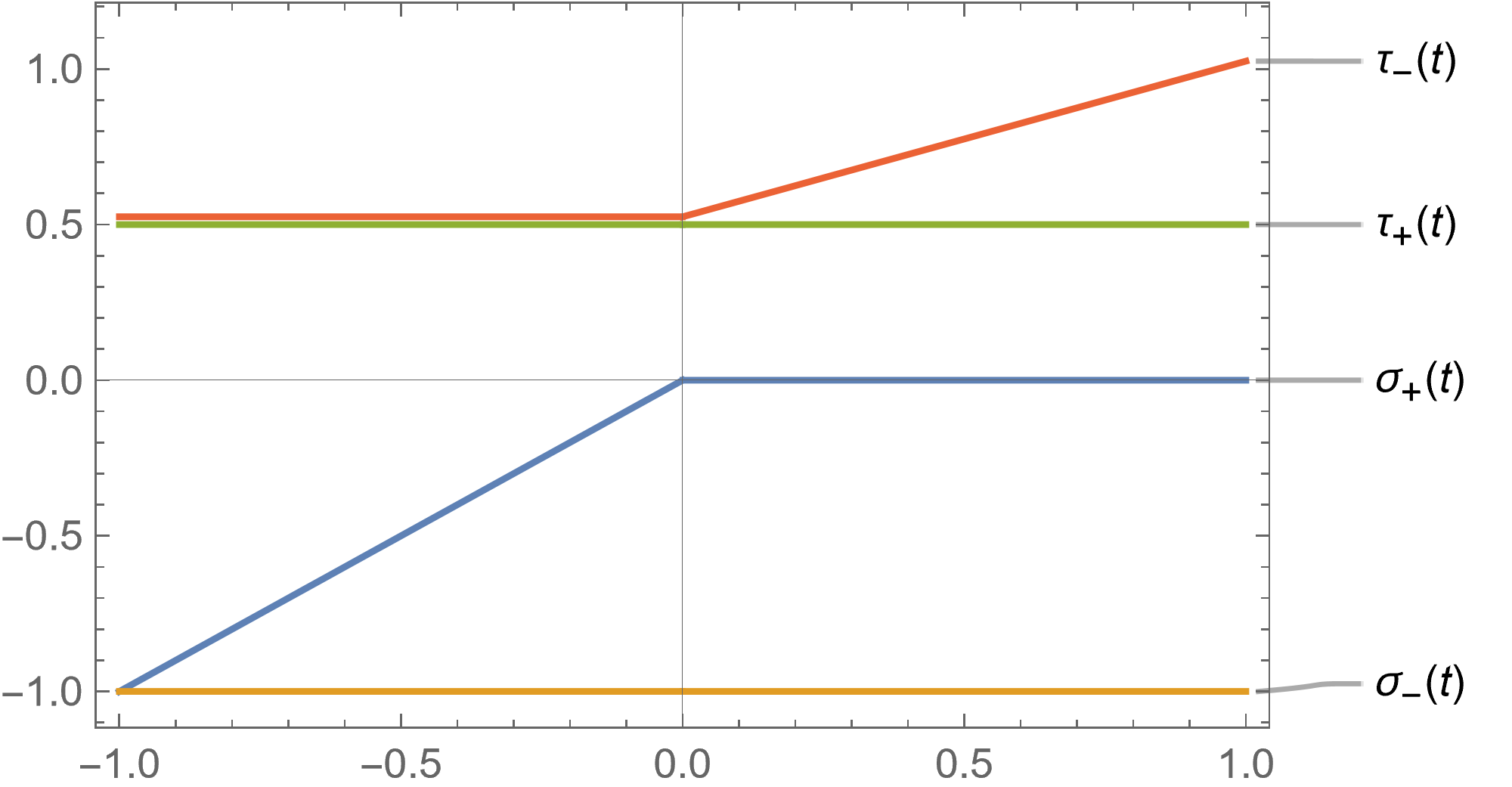}}
  \subcaptionbox{Plot of $\tau_{- -}$}[.49\linewidth][c]{\includegraphics[width=\linewidth]{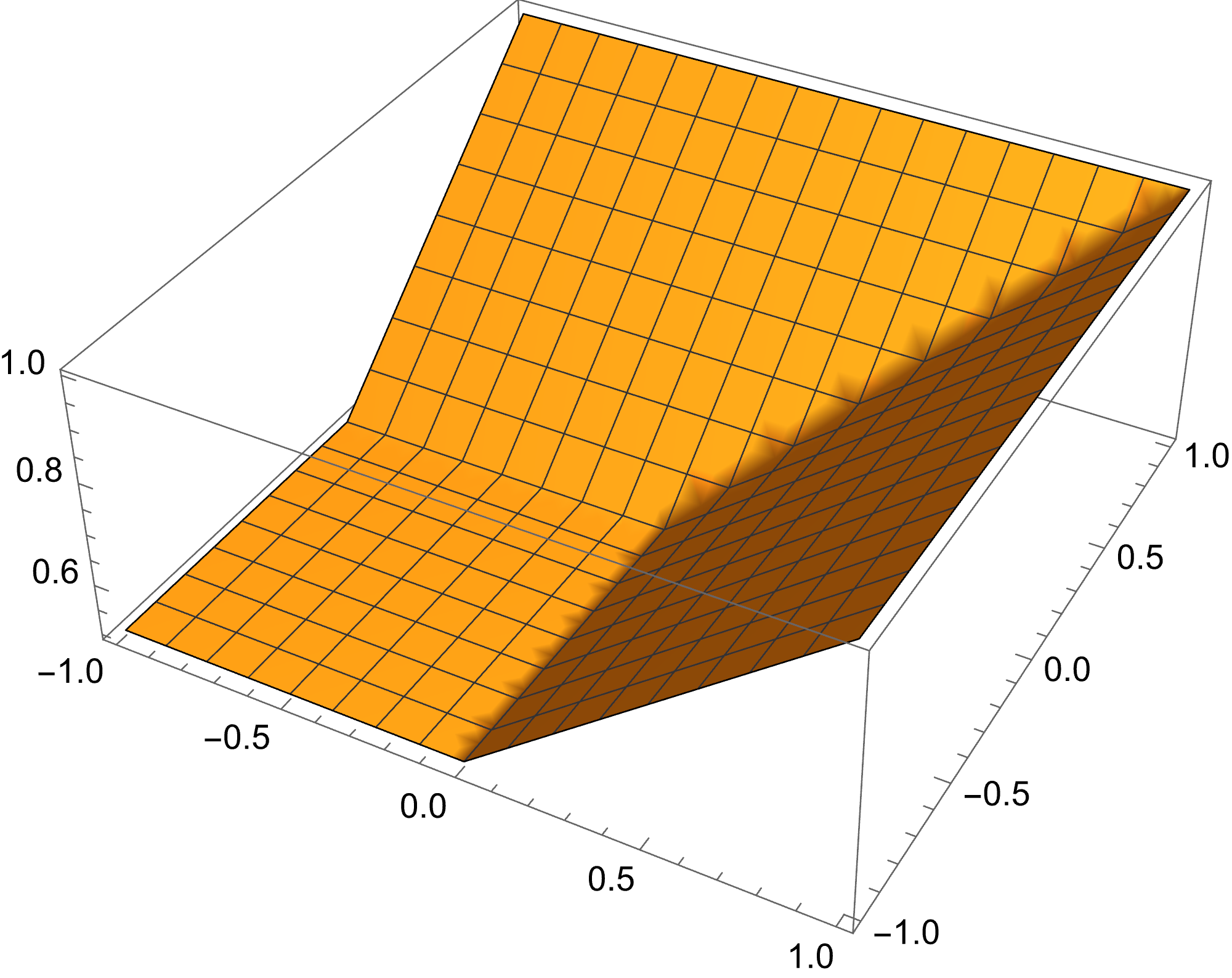}}
  \caption{Plots of our helper functions.}
  \label{fig:plot-sigma-tau}
\end{figure}

\begin{remark}
  \label{rmk shortcut}
  Using this notational shortcut, we have, for $\star \in \pm$ and $0 \leq t \leq 1$, that the components of $\push[\star]{1}(t) = \{ \push[\star]{1}(t)_-, \push[\star]{1}(t)_+, \push[\star]{1}(t)_{\colc} \} \in \SCv_2(1, \pm)$ are respectively given by:
  \begin{align*}
    \bigl(\push[\star]{1}(t)\bigr)_{\colc}
     & = [1/2, 1] \times [\sigma_\star(t), -\sigma_{-\star}(t)],
    \\
    \bigl(\push[\star]{1}(t)\bigr)_+
     & = [0, \tau_\star(t)] \times [0, 1],
     &
    \bigl(\push[\star]{1}(t)\bigr)_-
     & = [0, \tau_{-\star}(t)] \times [-1, 0].
  \end{align*}
  The two equations for the open inputs can be condensed further into, where $\star' \in \pm$:
  \begin{equation*}
    \bigl(\push[\star]{1}(t)\bigr)_{\star'} = [0, \tau_{\star \star'}(t)] \times \Bigl[ \frac{\star'-1}{2}, \frac{\star'+1}{2} \Bigr].
  \end{equation*}
\end{remark}

\begin{definition}\label{def push2}
  Let $(\star_1, \star_2) \in \pm^2$.
  The chain $\push[\star_1 \star_2]{2} : [-1, 1]^2 \to \SCv_3(1, \pm^2)$ is defined, for $-1 \leq t_1, t_2 \leq 1$, as follows (see Figure~\ref{fig:beta-plus-plus}):
  \begin{itemize}
    \item The closed component is given by the cube:
          \begin{equation}
            \bigl(\push[\star_1 \star_2]{2}(t_1, t_2)\bigr)_{\colc}
            \coloneqq
            [1/2, 1]
            \times [\sigma_{\star_1}(t_1),  - \sigma_{-\star_1}(t_1)]
            \times [\sigma_{\star_2}(t_2),  - \sigma_{-\star_2}(t_2)].
          \end{equation}
    \item The open component indexed by a pair $(\star'_1, \star'_2) \in  \pm^2$ is given by the cube:
          \begin{equation}
            \bigl(\push[\star_1 \star_2]{2}(t_1, t_2)\bigr)_{\star'_1 \star'_2}
            \coloneqq
            [0, \tau_{\star_1 \star'_1, \star_2 \star'_2}(t_1, t_2)],
            \times \Bigl[ \frac{\star'_1-1}{2}, \frac{\star'_1+1}{2} \Bigr]
            \times \Bigl[ \frac{\star'_2-1}{2}, \frac{\star'_2+1}{2} \Bigr].
          \end{equation}
  \end{itemize}
\end{definition}

\begin{figure}[htbp]
  \centering
  \includegraphics[width=.7\textwidth]{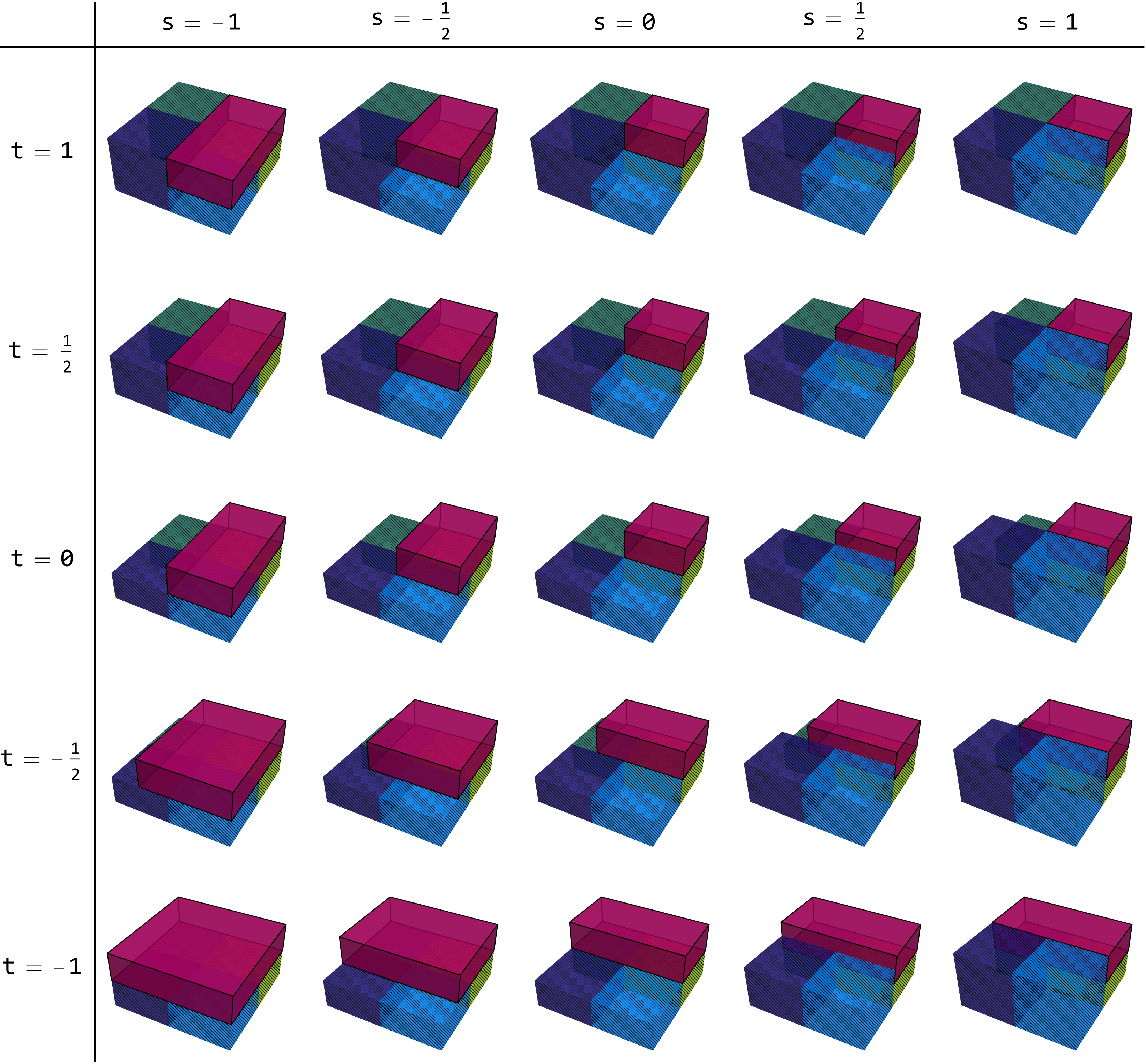}
  \caption{The 2-chain $\push[++]{2} \in C_2(\SCv_3(1, \pm^2))$.}
  \label{fig:beta-plus-plus}
\end{figure}

This 2-chain is built such that $\push[\star_1 \star_2]{2}(1, 1) = \pro{2} \circ_{\star_1 \star_2} \act{2}$ and $\push[\star_1 \star_2]{2}(0, 0) = \act{2} \circ_{\colo} \pro{2}$.
More precisely, we have the following lemma.

\begin{lemma}
  Given $\star = (\star_1, \star_2) \in \pm^2$, the boundary of $\push[\star_1 \star_2]{2}$ decomposes as:
  \begin{equation*}
    \begin{tikzcd}[scale cd=0.95,
        column sep = tiny,
        row sep = tiny,
        ampersand replacement = \&,
        labels={font=\tiny}
      ]
      \begin{aligned}
        \inc[\{2\}]{2}\pro{1}\!
        \begin{cases}
          \circ_{-\star_2} \inc[\{1\}]{2} \pro{1} \\
          \circ_{\star_2} \act{2}(\id_{\colc},\inc[\{1\}]{2} \pro{1})
        \end{cases}
      \end{aligned}
      \arrow[rr, "{
      \begin{aligned}
        \inc[\{2\}]{2} \pro{2}\!
        \begin{cases}
          \circ_{-\star_2} \inc[\{1\}]{2} \pro{1} \\
          \circ_{\star_2} \inc[\{1\}]{2} \push[\star_1]{1}(t_1)
        \end{cases}
      \end{aligned}
      }"]
      \& \&
      \pro{2} \circ_{\star_1 \star_2} \act{2}
      \\
      {} \&
      \push[\star_1 \star_2]{2}(t_1, t_2)
      \&
      \\
      \act{2} \circ_{\colo} \pro{2}
      \arrow[rr, "{
      \begin{aligned}
        \inc[\{1\}]{2} \push[\star_1]{1}(t_1)\!
        \begin{cases}
          \circ_{-\star_1} \inc[\{2\}]{2} \pro{1} \\
          \circ_{\star_1} \inc[\{2\}]{2} \pro{1}
        \end{cases}
      \end{aligned}
      }" swap]
      \arrow[uu, "{
      \begin{aligned}
        \inc[\{2\}]{2} \push[\star_2]{1}(t_2)\!
        \begin{cases}
          \circ_{-\star_2} \inc[\{1\}]{2}\pro{1} \\
          \circ_{\star_2} \inc[\{1\}]{2}\pro{1}
        \end{cases}\!\!\!\!\!\!\!
      \end{aligned}
      }"]
      \& \&
      \begin{aligned}
        \inc[\{1\}]{2}\pro{1}\!
        \begin{cases}
          \circ_{-\star_1} \inc[\{2\}]{2} \pro{1} \\
          \circ_{\star_1} \act{2}(\id_{\colc},\inc[\{2\}]{2} \pro{1})
        \end{cases}
      \end{aligned}
      \arrow[uu, "{
      \begin{aligned}
        \inc[\{1\}]{2} \pro{1}\!
        \begin{cases}
          \circ_{-\star_1} \inc[\{2\}]{2} \pro{1} \\
          \circ_{\star_1} \inc[\{2\}]{2} \push[\star_2]{1}(t_2)
        \end{cases}
      \end{aligned}
      }" swap]
    \end{tikzcd}
  \end{equation*}
\end{lemma}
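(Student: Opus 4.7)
The plan is to verify each of the four face-restrictions of $\push[\star_1 \star_2]{2}$ by direct substitution into the explicit formulas of Definition~\ref{def push2}. The four equalities are essentially forced once one records the behaviour of the helper functions $\sigma_\star$ and $\tau_{\bullet}$ at the endpoints $\pm 1$; the only real subtlety lies in matching the resulting cube configuration with the claimed operadic composition.

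First I would record the key simplifications. Setting $t_i = -1$ gives $\sigma_{\star_i}(-1) = -1$ and $-\sigma_{-\star_i}(-1) = 1$, so the $i$-th factor of the closed cube fills $[-1, 1]$ and the contribution $(1 + t_i)/2$ in the $\tau$-formula drops out whenever $\star_i \star'_i = -$. Setting $t_i = +1$ gives $\sigma_{\star_i}(1) = (\star_i - 1)/2$ and $-\sigma_{-\star_i}(1) = (\star_i + 1)/2$, so the closed cube is restricted to the $\star_i$-half of axis $i$; moreover, the $\tau$-value of an open cube labelled $(\star'_1, \star'_2)$ collapses either to $1$ (when $\star'_i \neq \star_i$) or to the one-parameter $\tau_{\star_j \star'_j}(t_j)$ (when $\star'_i = \star_i$).

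I would then carry out one representative edge in full, say the right edge at $t_1 = +1$, and argue the remaining three by the obvious $(1 \leftrightarrow 2)$ and $(-1 \leftrightarrow +1)$ symmetries. For $t_1 = +1$, the simplifications above show that axis $1$ is split cleanly by the sign $\star'_1$: the closed cube and the two open cubes with $\star'_1 = \star_1$ live in the $\star_1$-half of axis $1$ and, projected to axes $0$ and $2$, assemble exactly into $\push[\star_2]{1}(t_2)$; the two open cubes with $\star'_1 = -\star_1$ live in the $-\star_1$-half, are full-width in axis $0$, and split axis $2$ as a copy of $\pro{1}$. Recognising this as the operadic composition of a single copy of $\pro{1}$ that splits axis $1$ (lifted to $\SCv_3$ by widening the remaining axis) with $\pro{1}$ plugged into the $-\star_1$-input and $\push[\star_2]{1}(t_2)$ plugged into the $\star_1$-input (each lifted by the other inclusion $\inc[\bullet]{2}$) yields the formula on the right edge of the diagram. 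The four corners are then either read off from the adjacent edge formulas or checked directly: at $(t_1, t_2) = (-1, -1)$ every $\tau$ equals $1/2$ and the closed cube fills $[-1, 1]^2$ in axes $1, 2$, giving $\act{2} \circ_{\colo} \pro{2}$, while at $(t_1, t_2) = (1, 1)$ the closed cube lies in the $(\star_1, \star_2)$-quadrant of axes $1, 2$ and only the open input labelled $(\star_1, \star_2)$ has $\tau = 1/2$, giving $\pro{2} \circ_{\star_1 \star_2} \act{2}$.

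The main obstacle is not the geometric check, which is essentially forced by the definitions, but rather the bookkeeping of operadic labels prescribed by Convention~\ref{conv compos pm}: on each edge one must track which subset $S \subseteq \underline{2}$ indexes the outer operation (and therefore which axis is split) and then read off the inner labels as the $\pm$-positions in the complementary axis, all while reassembling the resulting decoration as an element of $\pm^2$. Once this axis-to-sign correspondence is fixed, the same argument runs uniformly for all four edges.
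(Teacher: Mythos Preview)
Your proposal is correct and matches the paper's approach: the paper states this lemma without proof, treating it as a direct verification from Definition~\ref{def push2}, and the general-$n$ version of exactly this face computation appears at the start of the proof of Lemma~\ref{Lemma Spec is closed an homologous to loop}. Your endpoint analysis of $\sigma_\star$ and $\tau$ and the reduction to one representative edge plus symmetry is precisely the intended argument.
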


\subsection{Sketch of proof of the non-formality of \texorpdfstring{$\SCv_3$}{SCvor3}}
\label{sec sketch non formal 3}

Using the above data, we can now define the following cubical $2$-chain $\spec[+]{2} \in \SCv_3(2, \pm^2)$ by gluing and composing together several basic chains:

\NewDocumentCommand{\ourstack}{mm}{
  \!\!\left\lbrace \begin{matrix*}[l] \! #1 \\ \! #2 \end{matrix*} \right.
}

\begin{equation}\label{eq huge square}
  \begin{tikzpicture}[shorten >=3pt, shorten <=3pt]
    \matrix [
      matrix of math nodes,
      row sep = 0pt,
      column sep = 0pt,
      nodes={font=\tiny, inner sep = 0pt, minimum width=7em, minimum height=5ex},
    ] (M)
    {
      \pro{2}\ourstack{\circ_{+-}\act{2}}{\circ_{-+}\act{2}}
       &
      \pro{\{2\}}\ourstack{\circ_-\push[+]{\{1\}}\!\!\pr_1}{\circ_+\push[-]{\{1\}}\!\Phi_{\emptyset,\{2\}}}
       &
      \pro{\{2\}}\ourstack{\circ_-\act{2}\circ_\colo\pro{\{1\}}}{\circ_+\push[-]{\{1\}}\!\Phi_{\{1\},\{2\}}}
       &
      \pro{\{2\}}\ourstack{\circ_-\act{2}\circ_\colo\pro{\{1\}}}{\circ_+\push[+]{\{1\}}\!\Phi_{\{1\},\{2\}}}
       &
      \pro{\{2\}}\ourstack{\circ_-\push[-]{\{1\}} \!\!\pr_1}{\circ_+\push[+]{\{1\}}\!\Phi_{\emptyset,\{2\}}}
       &
      \pro{2}\ourstack{\circ_{--}\act{2}}{\circ_{++}\act{2}}
      \\
      \pro{\{1\}}\ourstack{\circ_+\push[-]{\{2\}}\!\!\pr_2}{\circ_-\push[+]{\{2\}}\!\Phi_{\emptyset,\{1\}}}
       &
      \push[+-]{\underline 2}\circ_{-+}\act{2}
       &
      \push[-]{\{2\}}\!\!\pr_2\ourstack{\circ_- \pro{\{1\}}}{\circ_+\push[-]{\{1\}}\!\!\pr_1}
       &
      \push[-]{\{2\}}\!\!\pr_2
      \ourstack{\circ_-\pro{\{1\}}}{\circ_+\push[+]{\{1\}}\!\!\pr_1}
       &
      \push[--]{\underline 2}\circ_{++}\act{2}
       & \pro{\{1\}}\ourstack{\circ_-\push[-]{\{2\}} \!\!\pr_2}{\circ_+\push[+]{\{2\}}\!\Phi_{\emptyset,\{1\}}}
      \\
      \pro{\{1\}}\ourstack{\circ_+\act{2}\circ_\colo\pro{\{2\}}}{\circ_-\push[+]{\{2\}}\!\Phi_{\{2\},\{1\}}}
       &
      \push[+]{\{1\}}\!\!\pr_1\ourstack{\circ_+\pro{\{2\}}}{\circ_- \push[+]{\{2\}}\!\!\pr_2}
       &
      \act{2}\circ_{\colo}\push[-+]{\underline 2}
       &
      \act{2}\circ_{\colo}\push[++]{\underline 2}
       &
      \push[-]{\{1\}}\!\!\pr_1
      \ourstack{\circ_-\pro{\{2\}}}{\circ_+\push[+]{\{2\}}\!\!\pr_2}
       &
      \pro{\{1\}}\ourstack{\circ_-\act{2}\circ_\colo\pro{\{2\}}}{\circ_+\push[+]{\{2\}}\!\Phi_{\{2\},\{1\}}}
      \\
      \pro{\{1\}}\ourstack{\circ_+\act{2}\circ_\colo\pro{\{2\}}}{\circ_-\push[-]{\{2\}}\!\Phi_{\{2\},\{1\}}}
       &
      \push[+]{\{1\}}\!\!\pr_1\ourstack{\circ_+\pro{\{2\}}}{\circ_-\push[-]{\{2\}}\!\!\pr_2}
       &
      \act{2}\circ_{\colo}\push[--]{\underline 2}
       &
      \act{2}\circ_{\colo}\push[+-]{\underline 2}
       &
      \push[-]{\{1\}}\!\!\pr_1\ourstack{\circ_- \pro{\{2\}}}{\circ_+\push[-]{\{2\}}\!\!\pr_2}
       &
      \pro{\{1\}}\ourstack{\circ_-\act{2}\circ_\colo\pro{\{2\}}}{\circ_+\push[-]{\{2\}}\!\Phi_{\{2\},\{1\}}}
      \\
      \pro{\{1\}}\ourstack{\circ_+\push[+]{\{2\}}\!\!\pr_2}{\circ_-\push[-]{\{2\}}\!\Phi_{\emptyset,\{1\}}}
       &
      \push[++]{\underline 2}\circ_{--}\act{2}
       &
      \push[+]{\{2\}}\!\!\pr_2\ourstack{\circ_+\pro{\{1\}}}{\circ_-\push[-]{\{1\}}\!\!\pr_1}
       &
      \push[+]{\{2\}}\!\!\pr_2\ourstack{\circ_+\pro{\{1\}}}{\circ_-\push[+]{\{1\}}\!\!\pr_1}
       &
      \push[-+]{\underline 2}\circ_{+-}\act{2}
       &
      \pro{\{1\}}\ourstack{\circ_-\push[+]{\{2\}}\!\!\pr_2}{\circ_+\push[-]{\{2\}}\!\Phi_{\emptyset,\{1\}}}
      \\
      \pro{2}\ourstack{\circ_{++}\act{2}}{\circ_{--}\act{2}}
       &
      \pro{\{2\}}\ourstack{\circ_+\push[+]{\{1\}} \!\!\pr_1}{\circ_-\push[-]{\{1\}}\!\Phi_{\emptyset,\{2\}}}
       &
      \pro{\{2\}}\ourstack{\circ_+\act{2}\circ_\colo\pro{\{1\}}}{\circ_-\push[-]{\{1\}}\!\Phi_{\{1\},\{2\}}}
       &
      \pro{\{2\}}\ourstack{\circ_+\act{2}\circ_\colo\pro{\{1\}}}{\circ_-\push[+]{\{1\}}\!\Phi_{\{1\},\{2\}}}
       &
      \pro{\{2\}}\ourstack{\circ_+\push[-]{\{1\}} \!\!\pr_1}{\circ_-\push[+]{\{1\}}\!\Phi_{\emptyset,\{2\}}}
       &
      \pro{2}\ourstack{\circ_{-+}\act{2}}{\circ_{+-}\act{2}}
      \\
    };
    \foreach \i/\di in {1/->, 2/->, 3/->, 4/<-, 5/<-, 6/<-}
    \foreach \j/\dj in {1/<-, 2/<-, 3/<-, 4/->, 5/->, 6/->}
      {
        \draw[\dj] (M-\i-\j.south west) -- (M-\i-\j.south east);
        \draw[\di] (M-\i-\j.south west) -- (M-\i-\j.north west);
      }
    \foreach \i/\di in {1/<-, 2/<-, 3/<-, 4/->, 5/->, 6/->}
    \draw[\di] (M-\i-6.north east) -- (M-\i-6.south east);
    \foreach \j/\dj in {1/<-, 2/<-, 3/<-, 4/->, 5/->, 6/->}
    \draw[\dj] (M-1-\j.north west) -- (M-1-\j.north east);
  \end{tikzpicture}
\end{equation}

\begin{lemma}
  Let $\spec[+]{2}$ be the above 2-chain.
  Then $\spec[+]{2} \cdot (1-(\colc_1 \, \colc_2))$ is closed and homologous to $\act{2}(\loo{2},\pro{2})$.
\end{lemma}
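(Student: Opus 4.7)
The plan is to follow the template of Lemma~\ref{lem gam1} one dimension up: first establish that $\spec[+]{2}\cdot(1-(\colc_1\colc_2))$ is a cycle by a cell-by-cell bookkeeping argument, then produce an explicit $3$-chain $\gam{2}$ realizing the homology with $\act{2}(\loo{2},\pro{2})$.

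To show the cycle condition, I would exploit the fact that the $6\times 6$ grid in \eqref{eq huge square} has been arranged so that along each interior edge, the two adjacent $2$-cells have matching faces with opposite orientations. This can be verified cell-by-cell using the boundary decomposition of $\push[\star_1\star_2]{2}$ established in the preceding lemma, combined with Remark~\ref{rmk beta 01} applied to the $\push[\star]{1}$ factors appearing on the outer rim, and the naturality of composition under Convention~\ref{conv compos pm}. The auxiliary operations $\Phi_{S,T}$ and $\pr_i$ are chosen precisely to make the face orientations compatible with the arrow pattern drawn in \eqref{eq huge square}. After all interior cancellations, $d\spec[+]{2}$ reduces to a $1$-cycle running along the perimeter of the grid. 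A direct inspection reveals a central symmetry: the cell at position $(i,j)$ is obtained from the cell at position $(7-i,7-j)$ by applying the transposition $(\colc_1\colc_2)$ on the closed inputs (as is visible, for example, by comparing the four corner cells). The perimeter cycle is therefore fixed by $(\colc_1\colc_2)$ and so is annihilated upon multiplication by $(1-(\colc_1\colc_2))$.

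For the homology statement, I would construct an explicit 3-chain $\gam[+]{2}\in C_3(\SCv_3(2,\pm^2))$ by a two-stage deformation parameterized by an extra variable $t_3\in[-1,1]$, in direct analogy with $\gam[+]{1}$. During the first half ($t_3\in[-1,0]$), the open cubes retract into the thin slab $[0,(t_3+1)/4]\times[-1,1]^2$ while the closed cubes stay as in $\spec[+]{2}(t_1,t_2)$. During the second half ($t_3\in[0,1]$), the open cubes stabilize in the configuration $\act{2}\circ_\colo\pro{2}$, while the closed cubes interpolate linearly from their positions in $\spec[+]{2}(t_1,t_2)$ to those of $\act{2}(\loo[+]{2}(t_1,t_2),\pro{2})$. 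Disjointness of interiors follows exactly as in Lemma~\ref{lem gam1}: the hyperplane $\{1/4\}\times[-1,1]^2$ separates closed from open inputs in the first half, and appropriate hyperplanes perpendicular to the first coordinate separate the two closed inputs from each other in both halves. Setting $\gam{2}\coloneqq \gam[+]{2}\cdot(1-(\colc_1\colc_2))$ yields a chain whose boundary computation gives $d\gam{2}=\spec[+]{2}\cdot(1-(\colc_1\colc_2))-\act{2}(\loo{2},\pro{2})$.

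The main obstacle is the sheer bookkeeping involved in verifying that all $36$ cells of the grid glue correctly along their shared edges. The geometric content is a direct suspension of the dimension-$1$ case and the construction of $\gam{2}$ adds little new difficulty once the $\spec[+]{2}$ cycle is in hand; the work lies entirely in tracking the labeling scheme of Convention~\ref{conv compos pm} and the way $\Phi_{S,T}$ interacts with face maps, connections and inversions. The verification can be organized using the central symmetry identified above together with the horizontal/vertical reflection symmetries evident in the grid, which reduce the number of independent edges one has to check.
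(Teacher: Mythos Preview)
Your proposal is correct and follows essentially the same approach as the paper, which simply defers the $n=2$ case to the general argument of Section~\ref{sec non-formal all} (Lemma~\ref{Lemma Spec is closed an homologous to loop}): verify the interior face cancellations among the $\spec[\star,S,T]{2}$, use the central symmetry $\spec[+]{2}(t)=(\spec[+]{2}\cdot(\colc_1\colc_2))(-t)$ on the perimeter, and build the two-stage homotopy $\gam[+]{2}$. One small correction to your disjointness sketch: the two closed cubes are not always separated by hyperplanes perpendicular to the $0$th coordinate---for parameters with $|t_i|\geq 1/2$ the relevant separating hyperplane is $\{x_i=0\}$ (condition~4 in the paper's general proof), so your phrase ``perpendicular to the first coordinate'' should be relaxed accordingly.
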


\begin{theorem}
  The operad $\SCv_3$ is not formal.
\end{theorem}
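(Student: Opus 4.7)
The proof follows the template of Theorem~\ref{NonFormality n=1} with $\spec{2} \coloneqq \spec[+]{2} \cdot (1-(\colc_1\colc_2))$ replacing $\spec{1}$. Assume by contradiction a span $C_*(\SCv_3) \xleftarrow{\phi} \opQ \xrightarrow{\psi} H_*(\SCv_3)$ of quasi-isomorphisms, and lift $\pro{2}, \act{2}, \loo{2}$ to cycles $\pro{Q}, \act{Q}, \loo{Q}$ in $\opQ$, choosing homotopies $\pro-{2}, \act-{2}, \loo-{2}$ with $\phi\pro{Q} = \pro{2} + d\pro-{2}$ and similarly for the others.

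The dimension-2 argument, applied inside $\opQ$, first lifts the subordinate 1-chains $\inc[\{i\}]{2}\push[\star]{1}$ (for $i\in\{1,2\}$, $\star\in\pm$) to elements of $\opQ$ with the correct boundaries. Since the lemma preceding this subsection exhibits the prescribed boundary of $\push[\star_1\star_2]{2}$ as a boundary in $C_*(\SCv_3(1, \pm^2))$, we may then choose $\push[\star_1\star_2]{Q} \in \opQ(1, \pm^2)_2$ realizing the analogous boundary equation in $\opQ$. Assemble these into $\spec{Q} \in \opQ(2, \pm^2)_2$ via the exact pattern of~(\ref{eq huge square}); closedness of $\spec{Q}$ transcribes verbatim from that of $\spec{2}$.

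A lifting lemma analogous to Lemma~\ref{Lemma NonForm Dim 2} --- proved by adjoining the same $\Delta$-, $\Phi_{S,T}$- and $\Psi_n$-correction terms via Convention~\ref{conv Phi} --- combined with the fact that $H_2(\SCv_3(2, \pm^2))$ is generated by $[\act{2}(\loo{2}, \pro{2})]$ as a $\Sigma_2$-module, yields an equation $\spec{Q} = \lambda \act{Q}(\loo{Q}, \pro{Q}) + d\gam{Q}$ for some $\lambda \in \K$. Applying $\phi$ and invoking the preceding lemma forces $\lambda = 1$. Applying $\psi$, each $\psi\push[\star_1\star_2]{Q}$ vanishes for degree reasons on $H_*(\SCv_3(1, \pm^2))$ (the relevant 2-chain retracts onto a contractible region of configurations where the open inputs lie in their standard linear order), so $\psi\spec{Q} = 0$; but $\psi\act{Q}(\loo{Q}, \pro{Q})$ is a nonzero scalar multiple of $[\act{2}(\loo{2}, \pro{2})]$, forcing $\lambda = 0$ and producing the desired contradiction.

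The main obstacle is verifying that $\spec{Q}$ is closed, i.e., that every internal edge of the $6 \times 6$ grid~(\ref{eq huge square}) cancels with its neighbor. This is mechanical but lengthy: one checks each of the $60$ internal edges using the boundary formula for $\push[\star_1\star_2]{2}$ stated just above, together with Convention~\ref{conv compos pm} for tracking the sign-strings that appear after operadic composition. The structural argument is identical to Section~\ref{sec proof non formal 2}; only the combinatorial complexity grows, which is precisely why this section is presented as a sketch and the fully general proof is deferred to Section~\ref{sec proof non-formal all}.
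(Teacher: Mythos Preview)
Your proposal is correct and follows essentially the same approach as the paper, which explicitly defers the proof of this theorem to the general case in Section~\ref{sec non-formal all}; your sketch is precisely the $n=2$ specialization of that argument, down to the lifting lemma, the $\Delta$/$\Phi_{S,T}$/$\Psi_n$ correction terms, and the $\lambda=1$ versus $\lambda=0$ contradiction. One minor point: your parenthetical justification for $\psi\push[\star_1\star_2]{Q}=0$ (open inputs in ``standard linear order'') does not quite apply in $\SCv_3$, where the open cubes sit on a $2$-plane rather than a line, but the paper itself writes only ``for degree reasons'' at the corresponding step, so you match its level of rigor.
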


The proofs of the above lemma and corollary are simply special cases of the proofs of Section~\ref{sec non-formal all}.
As explained in the introduction of this section, we only included this case to give visual pictures for the chains that we will build there.

\section{Non-formality in all dimensions}
\label{sec non-formal all}

\subsection{Definition of the basic \texorpdfstring{$n$}{n}-chains}

We are now ready to give the definitions of the basic $n$-chains (analogous to $\push[\star]{1}$ and $\spec[\star]{1}$ above) in $\SCv_{n+1}$.
First, let us define the multi-fold product inductively (recalling that $\pro{1}$ is found in Definition~\ref{def pro1}):
\begin{definition}\label{def proN}
  Given $n \geq 2$, we (inductively) define $\pro{n} \in C_0 \SCv_{n+1}(0, \pm^n)$ by:
  \begin{equation}
    \pro{n}
    \coloneqq
    \inc[\underline{n-1}]{n}\pro{1}
    \bigl( \inc[\{n\}]{n} \pro{n-1}, \; \inc[\{n\}]{n} \pro{n-1} \bigr).
  \end{equation}
  Concretely, given $\star \in \pm^n$, the corresponding component of $\pro{n}$ is given by:
  \begin{equation}
    (\pro{n})_{\star} =
      [0,1]\times
    \prod_{j=1}^{n} \bigl[ \frac{\star_j-1}{2}, \frac{\star_j+1}{2}\bigr]
    =
    [0,1]\times
    \prod_{j=1}^n
    \begin{cases}
      [-1,0], & \star_j=- \\
      [0,1],  & \star_j=+
    \end{cases}
  \end{equation}
\end{definition}

Moreover, we define the ``action'' as follows:

\begin{definition}
  \label{def actn}
  Given $n \geq 2$, let $\act{n} \in C_0 \SCv_{n+1}(1, \pm^0)$ be defined by:
  \begin{equation*}
    \act{n} =
    \begin{cases}
      [1/2, 1] \times [-1, 1]^n & (\colc); \\
      [0, 1/2] \times [-1, 1]^n & (\colo).
    \end{cases}
  \end{equation*}
\end{definition}

We are now ready to give a definition for the analogue of $\push[\star_1\star_2]{2}$ from Definition~\ref{def push2} in every dimension.

\begin{definition}\label{def pushn}
  Let $\star \in \pm^n$ be a string of $n$ pluses and minuses.
  Let the $n$-chain $\push[\star]{n} \in C_n\SCv_{n+1}(1, \pm^n)$ as follows, where $t = (t_1, \dots, t_n) \in [-1, 1]^n$:
  \begin{itemize}
    \item The closed component $(\push[\star]{n})_{\colc}$ is given by:
          \begin{equation*}
            (\push[\star]{n})_{\colc}(t)
            = \bigl[ 1/2, 1 \bigr]
            \times \prod_{i=1}^n \bigl[ \sigma_{\star_i}(t_i),  - \sigma_{-\star_i}(t_i) \bigr].
          \end{equation*}
    \item Given $\star' \in \pm^n$, the corresponding open component is given by:
          \begin{equation*}
            (\push[\star]{n})_{\star'}(t)
            = [0, \tau_{\star\star'}(t)]
            \times \prod_{i=1}^n \bigl[ \frac{\star'_i - 1}{2}, \frac{\star'_i+1}{2} \bigr].
          \end{equation*}
  \end{itemize}
\end{definition}

\begin{convention}\label{conv ab0}
  For the sake of consistency, let $\pro{0} \coloneqq \id \in C_0(\SCv_1(0, \pm^0))$ to be the ``one-fold product'' and $\push{0} = \act{0} \in C_0(\SCv_1(1, \pm^0))$ to be the action.
  They are respectively given by:
  \begin{align*}
    \pro{0}
     & = [0, 1],
     & \act{0}
     & =
    \begin{cases}
      [1/2, 1] & (\colc)  \\
      [0, 1/2] & (\colo).
    \end{cases}
  \end{align*}
\end{convention}

\begin{definition}
  Let $\loo[+]{n}\in \C_n(2)$ be defined as
  \begin{multline*}
    \loo[+]{n}(t)
    \coloneqq \Biggl\{
    \Bigl[\frac{1-\max\{\lvert 2t_j\rvert\mid j\in\underline{n} \}}{4},\frac{3-\max\{\lvert 2t_j\rvert\mid j\in\underline{n} \}}{4}\Bigr]
    \times \prod_{i=1}^n \Bigl[\frac{-1-2t_i}{4},\frac{1-2t_i}{4}\Bigr],
    \\
    \Bigl[\frac{-3+\max\{\lvert 2t_j\rvert\mid j\in\underline{n} \}}{4},\frac{-1+\max\{\lvert 2t_j\rvert\mid j\in\underline{n} \}}{4}\Bigr]
    \times \prod_{i=1}^n \Bigl[\frac{-1+2t_i}{4},\frac{1+2t_i}{4}\Bigr]
    \Biggr\}.
  \end{multline*}
  For $t\in\partial[-1,1]^n$ we have $\loo[+]{n}(t)=(\loo[+]{n}\cdot(12))(-t)$, so we get the closed element
  \begin{equation}\label{eq loon}
    \loo{n} \coloneqq \loo[+]{n}-(-1)^n\loo[+]{n}\cdot(12)\in C_n(\C_{n+1}(2))
  \end{equation}
  whose homology class generates $H_n(\C_{n+1}(2))$.
\end{definition}

\begin{notation}
  Given $n \geq 1$, $S\subseteq \underline{n}$, and $\star\in\pm^S$, we use the notation:
  \begin{align*}
    \push[\star]{S}
     & \coloneqq \inc[\underline{n}\setminus S]{n}\push[\star]{\lvert S\rvert}
    \in C_{|S|} \SCv_{n+1}(1, \pm^S),
     &
    \pro{S}
     & \coloneqq \inc[\underline{n}\setminus S]{n}\pro{\lvert S\rvert}
    \in \SCv_{n+1}(0, \pm^S).
  \end{align*}
\end{notation}

\begin{remark}
  The parameter $n$ is implicit in the above notation, since $S$ is not just a set but a subset of $\underline{n}$.
  However, it is important to note that $\push[\star]{S}$ and $\pro{S}$ depend on $n$.
\end{remark}

\begin{notation}
  For $\star\in \pm^n$, let $-\star\in \pm^n$ be such that if $i\in\underline{n}$ then $(-\star)_i=-(\star_i)$.
  Moreover, for $S \subseteq \underline{n}$ and $\star\in\pm^n$, let $\star_S\in\pm^{\lvert S\rvert}$ as the projection onto the coordinates in $S$.
\end{notation}

Note that if $\star \in \pm^n$, then $\star_{\underline{n}} = \star$ and $\star_{\emptyset} = \colo$ is the empty sequence (Convention~\ref{conv Qkl}).
For the next definition, recall the maps $\Phi_{S,T}$ from Definition~\ref{def Phi}.

\begin{definition}\label{def spec n}
  Let $n \geq 1$ and $S, T \subseteq \underline{n}$ be such that $S \cap T = \emptyset$.
  We then define the following cubical $n$-chains in $\SCv_{n+1}(2, \pm^n)$:
  \begin{align}\label{subEtas}
    \spec[\star,S,\emptyset]{n}
     & \coloneqq
    \Biggl(
    \push[-\star_{\underline{n}\setminus S}]{\underline{n}\setminus S}\pr_{\underline{n}\setminus S}
    \underset{\substack{\star'\in \pm^{\underline{n}\setminus S}
    \\
        \star'\neq \star_{\underline{n}\setminus S}}}{\bigcirc}\pro{S}
    \Biggr)
    \circ_{\star_{\underline{n}\setminus S}}\push[\star_{S}]{S}\pr_S
    ;
    \\
    \spec[\star,S,T]{n}
     & \underset{\mathclap{T \neq \emptyset}}{\coloneqq}
    \Bigl(
    \pro{T}\underset{\substack{\star'\in\pm^T \\ \star'\neq \star_T,-\star_T}}{\bigcirc} \pro{\underline{n}\setminus T}
    \Bigr)
    \begin{cases}
      \circ_{-\star_T}
      \left(\push[-\star_{\underline{n}\setminus (S\sqcup T)}]{\underline{n} \setminus (S\sqcup T)}\pr_{\underline{n} \setminus (S\sqcup T)}
      \underset{\star''\in\pm^{\underline{n}\setminus (S\sqcup T)}}{\bigcirc}\pro{S}\right)
      \\
      \circ_{\star_T}
      \push[\star_{\underline{n}\setminus T}]{\underline{n}\setminus T}\Phi_{S,T}
    \end{cases}
    ;
    \\
    \label{EtaPlus}
    \spec[+]{n}
     & \coloneqq \sum_{\star\in \pm^n}\sgn\star
    \sum_{\substack{S,T\subset \underline{n} \\ S\cap T=\emptyset}}
    \spec[\star,S,T]{n};
    \\
    \label{eq specn}
    \spec{n}
     & \coloneqq \spec[+]{n}-(-1)^n\spec[+]{n}\cdot(\colc_1\colc_2).
  \end{align}
\end{definition}

\begin{remark}
  Recall that $\push{0} = \act{0}$ (Convention~\ref{conv ab0}), so that $\push{\emptyset} = \inc[\underline{n}]{n} \act{0} = \act{n}$.
  It follows that when $S = T = \emptyset$, then $\spec[\star,\emptyset,\emptyset]{n} = \push[-\star]{n} \circ_{\star} \push{\emptyset}$, whereas if $S = \underline{n}$, then $\spec[\star,\underline{n},\emptyset]{n} = \push{\emptyset} \circ_{\colo} \push[\star]{n}$.
  For $n=2$, these are precisely the chains that appear in the center of the diagram~\eqref{eq huge square}, as well the corners of the inner $4 \times 4$ square of that diagram.
\end{remark}

\begin{example}
  Let $n = 1$, so that the only possibilities for the couple $(S,T)$ are $(\emptyset,\emptyset)$, $(\{1\}, \emptyset)$ and $(\emptyset, \{1\})$.
  As remarked above, we have, for $\star \in \pm$:
  \begin{align*}
    \spec[\star, \emptyset, \emptyset]{1}
     & = \push[-\star]{1} \circ_{\star} \act{1},
     & \spec[\star, \underline{1}, \emptyset]{1}
     & = \act{1} \circ_{\colo} \push[\star]{1}.
  \end{align*}
  Moreover, we have that, for $t = (t_1, t_2) \in [-1, 1]^2$:
  \begin{equation*}
    \spec[\star,\emptyset,\underline{1}]{1}(t_1, t_2)
    = \left(\pro{1} \begin{cases}
        \circ_{-\star} \act{1} \\
        \circ_{\star} \act{1}
      \end{cases} \right)(\Phi_{\emptyset, \underline{1}}(t_1, t_2))
    = \pro{1} \begin{cases}
      \circ_{-\star} \act{1} \\
      \circ_{\star} \act{1}
    \end{cases}.
  \end{equation*}
  In other words, the chain is constant (its value does not depend on $t$).
  While these constant chains are irrelevant in $\SCv_2$, we still write them down in the general formula for $\SCv_{n+1}$.
  The signed sum of the $\spec[\star,S,T]{1}$ recovers the 1-chain $\spec[+]{1}$ defined in Section~\ref{sec non formal 2}, up to these two constant chains.
  In higher dimension, the $\spec[\star,S,T]{n}$ are chains where cubes go ``back and forth'' but remain constant in some of the coordinates.
\end{example}

\subsection{Proof of the non-formality of \texorpdfstring{$\SCv_{n+1}$}{SCvor n+1}}
\label{sec proof non-formal all}

\begin{lemma}\label{Lemma Spec is closed an homologous to loop} The chain $\spec{n}$ is closed and homologous to $\act{n}(\loo{n},\pro{n})$.
\end{lemma}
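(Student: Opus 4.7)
The plan is to prove both parts simultaneously by constructing an explicit cubical $(n+1)$-chain $\gam{n} \in C_{n+1}\SCv_{n+1}(2, \pm^n)$ satisfying $d\gam{n} = \spec{n} - \act{n}(\loo{n}, \pro{n})$. Closedness of $\spec{n}$ then follows from $d^2 \gam{n} = 0$ together with the fact that $\act{n}(\loo{n}, \pro{n})$ is already closed: $\loo{n}$ is closed by~\eqref{eq loon}, and $\act{n}$, $\pro{n}$ are $0$-chains.

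The construction of $\gam{n}$ generalizes the one given for $\gam{1}$ in Lemma~\ref{lem gam1}. I would first reorganize the signed sum~\eqref{EtaPlus} defining $\spec[+]{n}$ as a single continuous $n$-cubical chain $[-1, 1]^n \to \SCv_{n+1}(2, \pm^n)$, obtained by partitioning $[-1, 1]^n$ into $2^n \cdot |\{ (S, T) : S \cap T = \emptyset \}|$ sub-cubes, one for each triple $(\star, S, T)$, and assigning $\spec[\star, S, T]{n}$ to the corresponding sub-cube. Consistency of the gluings along common boundaries of adjacent sub-cubes reduces, via the boundary formulas of the basic chains $\push[\star]{k}$, to identities among compositions of $\pro{k}$ and $\act{k}$, in the spirit of those used implicitly in the $n = 1$ case.

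Next, I would define $\gam[+]{n} : [-1, 1]^{n+1} \to \SCv_{n+1}(2, \pm^n)$ by using $t_{n+1}$ as a homotopy parameter interpolating between $\spec[+]{n}(t_1, \ldots, t_n)$ at $t_{n+1} = -1$ and $\act{n}(\loo[+]{n}(t_1, \ldots, t_n), \pro{n})$ at $t_{n+1} = 1$: the closed components are a convex combination of the closed cubes of the two endpoints, while the open components are first contracted (for $t_{n+1} \leq 0$) into a thin slab $[0, 1/4] \times [-1, 1]^n$ and then expanded (for $t_{n+1} \geq 0$) to match the open cubes of $\act{n}(\loo[+]{n}, \pro{n})$. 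Setting $\gam{n} \coloneqq \gam[+]{n} - (-1)^n \gam[+]{n} \cdot (\colc_1 \colc_2)$ then yields $d\gam{n} = \spec{n} - \act{n}(\loo{n}, \pro{n})$, provided one verifies that the side faces $d_i^\pm \gam[+]{n}$ for $i \leq n$ are exchanged up to the symmetry $\cdot (\colc_1 \colc_2)$, so that they cancel after symmetrization (generalizing the relation $\gam[+]{1}(\pm 1, t_2) = \gam[+]{1}(\mp 1, t_2) \cdot (\colc_1 \colc_2)$ used in the $n = 1$ case).

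The main obstacle is verifying that $\gam[+]{n}(t)$ is a well-defined element of $\SCv_{n+1}(2, \pm^n)$ for all $t$, i.e.\ that the proposed cubes have pairwise disjoint interiors, uniformly as $t$ varies across the many sub-cubes of the parametrization. This verification must be carried out piece by piece for each $\spec[\star, S, T]{n}$, using separating hyperplanes that depend on $\star$, $S$, $T$, and the signs of the $t_i$, along the lines of the four conditions enumerated in the proof of Lemma~\ref{lem gam1}. An additional subtlety comes from the $\Phi_{S, T}$-twisted pieces (those with $T \neq \emptyset$), which by Convention~\ref{conv Phi} are defined via the cubical Alexander--Whitney map rather than as literal precompositions; the interpolation and disjointness arguments must therefore be formulated at the level of the underlying cubical $\omega$-groupoid structure, as legitimized by Corollary~\ref{cor eqv cub operads}.
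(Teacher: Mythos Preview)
Your proposal is correct and follows essentially the same approach as the paper: reassemble $\spec[+]{n}$ into a single continuous map $[-1,1]^n \to \SCv_{n+1}(2,\pm^n)$ by gluing the $\spec[\star,S,T]{n}$ along matching facets, build an explicit homotopy $\gam[+]{n}$ to $\act{n}(\loo[+]{n},\pro{n})$ using $t_{n+1}$ as the deformation parameter, and then symmetrize by $(\colc_1\colc_2)$. One small remark: your final worry about the $\Phi_{S,T}$-twisted pieces is unnecessary here, since in this lemma everything takes place in $C_*\SCv_{n+1}$, the cubical chains of an honest topological operad, so the $\Phi_{S,T}$ are literal precompositions of maps of cubes and the disjointness and interpolation arguments are purely geometric; the Alexander--Whitney subtlety of Convention~\ref{conv Phi} and Corollary~\ref{cor eqv cub operads} only becomes relevant later, when transporting the construction to an abstract intermediate operad $\opQ$ in the non-formality argument.
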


\begin{proof}
  Let us recall that the boundary of $\push[\star]{n}$ is given by the signed sum of its facets:
  \begin{equation*}
    d \push[\star]{n}
    =\sum_{i\in \underline{n}}
    (-1)^{i}(d^-_i\push[\star]{n}-d^+_i\push[\star]{n}),
  \end{equation*}
  where the facets are given by (for $i \in \underline{n}$):
  \begin{align*}
    d^-_i\push[\star]{n}
     & =
    \push[\star_{\underline{n}\setminus\{i\}}]{\underline{n}\setminus\{i\}}\pr_{\underline n\setminus \{i\}}
    \underset{\star'\in\pm^{\underline{n}\setminus\{i\}}}{\bigcirc}\pro{\{i\}},
     &
    d^+_i\push[\star]{n}
     & =\pro{\{i\}}\begin{cases}
                     \circ_{-\star_i}\pro{\underline{n}\setminus\{i\}}
                     \\
                     \circ_{\star_i} \push[\star_{\underline{n}\setminus\{i\}}]{\underline{n}\setminus\{i\}}\pr_{\underline n\setminus \{i\}}
                   \end{cases}
    .
  \end{align*}

  For each $i\in \underline{n}$ and $\star\in\pm^n$ let $\star^i\in \pm^n$ be such that $\star^i_j=\star_j$ if $i\neq j$ and $\star^i_i=-\star_i$. If $S\subset \underline{n}$ and $i\in S$ then
  \begin{align*}
    d^-_i\left(\spec[\star,S,\emptyset]{n}\right)
     & =\left(\push[-\star_{\underline{n}\setminus S}]{\underline{n}\setminus S}\pr_{\underline{n}\setminus S}
    \underset{\substack{\star'\in \pm^{\underline{n}\setminus S} \\\star'\neq \star_{\underline{n}\setminus S}}}{\bigcirc}\pro{S}\right)
    \circ_{\star_{\underline{n}\setminus S}}
    \left(\push[\star_{S\setminus\{i\}}]{S\setminus\{i\}}\pr_{S\setminus\{i\}}
    \underset{\star''\in \pm^{S\setminus\{i\}}}{\bigcirc}\pro{\{i\}}\right) \\
     & =d^-_i\left(\spec[\star^i,S,\emptyset]{n}\right); \\
    d^+_i\left(\spec[\star,S,\emptyset]{n}\right)
     & =\left(\push[-\star_{\underline{n}\setminus S}]{\underline{n}\setminus S}\pr_{\underline{n}\setminus S}
    \underset{\star'\neq \star_{\underline{n}\setminus S}}{\bigcirc}\pro{S}\right)
    \circ_{\star_{\underline{n}\setminus S}}
    \left(\pro{\{i\}}
    \begin{cases}\circ_{-\star_i}\pro{S\setminus\{i\}} \\
      \circ_{\star_i}\push[\star_{S\setminus\{i\}}]{S\setminus\{i\}}\pr_{S\setminus\{i\}}
    \end{cases}\right) \\
     & =d^-_i\left(\spec[\star,S\setminus\{i\},\emptyset]{n}\right).
  \end{align*}

  If $S,T\subset \underline{n}$ are disjoint, $T\neq \emptyset$ and $i\in S$ then
  \begin{align*}
    d^-_i\left(\spec[\star,S,T]{n}\right)
     & =\left(\pro{T}
    \underset{\substack{\star'\in\pm^T \\\star'\neq \star_T,-\star_T}}{\bigcirc}\pro{\underline{n}\setminus T}
    \right)\begin{cases}
             \circ_{-\star_T}
             \push[-\star_{\underline{n}\setminus (S\sqcup T)}]{\underline{n}\setminus (S\sqcup T)}\pr_{\underline{n}\setminus (S\sqcup T)}
             \underset{\star''\in\pm^{\underline{n}\setminus(S\sqcup T)}}{\bigcirc}\pro{S} \\
             \circ_{\star_T}\left(
             \push[\star_{\underline{n}\setminus(\{i\}\cup T)}]{\underline{n}\setminus(\{i\}\cup T)}\Phi_{S,T}
             \underset{\star'''\in\pm^{\underline{n}\setminus(\{i\}\cup T)}}{\bigcirc}
             \pro{\{i\}}\right)
           \end{cases}
    \\
     & =d^-_i\left(\spec[\star^i,S,T]{n}\right);   \allowdisplaybreaks \\
    d^+_i\left(\spec[\star,S,T]{n}\right)
     & =\left(\pro{T}
    \underset{\substack{\star'\in \pm^T \\\star'\neq \star_T,-\star_T}}{\bigcirc}\pro{\underline{n}\setminus T}
    \right)\begin{cases}
             \circ_{-\star_T}
             \left(\push[-\star_{\underline{n}\setminus (S\sqcup T)}]{\underline{n}\setminus (S\sqcup T)}\pr_{\underline{n}\setminus (S\sqcup T)}
             \underset{\star''\in\pm^{\underline{n}\setminus(S\sqcup T)}}{\bigcirc}\pro{S}
             \right) \\
             \circ_{\star_T}
             \left(
             \push[\star_{\underline{n}\setminus(\{i\}\cup T)}]{\underline{n}\setminus(\{i\}\cup T)}\Phi_{S\setminus\{i\},T}
             \underset{\star'''\in\pm^{\underline{n}\setminus (\{i\}\cup T)}}{\bigcirc}
             \pro{\{i\}}
             \right)
           \end{cases}
    \\
     & =d^-_i\left(\spec[\star,S\setminus\{i\},T]{n}\right).
  \end{align*}

  If $S,T\subset \underline{n}$ are disjoint and $i\in T$ then
  \begin{align*}
    d^-_i\left(\spec[\star,S,T]{n}\right)
     & =\left(\pro{T}
    \underset{\substack{\star'\in \pm^T \\\star'\neq \star_T,-\star_T}}{\bigcirc}\pro{\underline{n}\setminus T}
    \right)\begin{cases}
             \circ_{-\star_T}
             \left(
             \push[-\star_{\underline{n}\setminus(S\sqcup T)}]{\underline{n}\setminus(S\sqcup T)}\pr_{\underline{n}\setminus(S\sqcup T)}
             \underset{\star''\in \pm^{\underline{n}\setminus(S\sqcup T)}}{\bigcirc}\pro{S}
             \right) \\
             \circ_{\star_T}
             \push[\star_{\underline{n}\setminus T}]{\underline{n}\setminus T}\Phi_{S,T\setminus\{i\}}
           \end{cases}
    \\
     & =d^+_i\left(\spec[\star,S,T\setminus\{i\}]{n}\right); \\
    d^+_i\left(\spec[\star,S,T]{n}\right)
     & =\left(\pro{T}
    \underset{\substack{\star'\in \pm^T \\\star'\neq \star_T,-\star_T}}{\bigcirc}\pro{\underline{n}\setminus T}
    \right)
    \begin{cases}
      \circ_{-\star_T}
      \left(
      \push[-\star_{\underline{n}\setminus(S\sqcup T)}]{\underline{n}\setminus(S\sqcup T)}\pr_{\underline{n}\setminus(S\sqcup T)}
      \underset{\star''\in\pm^{\underline{n}\setminus(S\sqcup T)}}{\bigcirc}\pro{S}
      \right) \\
      \circ_{\star_T}
      \left(
      \push[\star_{\underline{n}\setminus (S\sqcup T)}]{\underline{n}\setminus (S\sqcup T)}\pr_{\underline{n}\setminus(S\sqcup T)}
      \underset{\star'''\in \pm^{\underline{n}\setminus(S\sqcup T)}}{\bigcirc}\pro{S}
      \right)
    \end{cases} \\
     & =d^+_i\left( \spec[-\star,S,T]{n} \cdot(\colc_1\colc_2)\right).
  \end{align*}

  We may thus define, by abuse of notation,
  \begin{gather*}
    \spec[+]{n}:[-1,1]^n\rightarrow \SCv(2,\pm^n)\\
    \spec[+]{n}(t)=\spec[\sgn(t),\{i\in\underline{n}\mid \lvert t_i\rvert\leq 1/3\},\{i\in\underline{n}\mid \lvert t_i\rvert\geq 2/3\}]{n}\left(\begin{cases}
        \sgn(t_i)6t_i-1, & \lvert t_i\rvert\leq 1/3         \\
        \sgn(t_i)6t_i-3, & 1/3\leq \lvert t_i\rvert\leq 2/3 \\
        \sgn(t_i)6t_i-5, & 2/3\leq \lvert t_i\rvert
      \end{cases}\right)_{i\in\underline{n}}.
  \end{gather*}
  The equations we verified above guarantee that the points where this formula is ambiguous, ie points with some coordinate an integer multiple of $\frac{1}{3}$, are uniquely determined when we extend by continuity on the unambiguous points. For all $t\in \partial [-1,1]^n$ we have $\spec[+]{n}(t)=(\spec[+]{n}\cdot(\colc_1\colc_2))(-t)$. Thus we have that $\spec{n}$ is closed.

  Let $\gam[+]{n}\in C_{n+1}(\SCv_{n+1}(2,\pm^n))$ be defined as
  \begin{align*}
    \gam[+]{n}(t)_{\colc_i} & \coloneqq\min(1,1-t_{n+1})\spec[+]{n}(t_{\underline{n}})_{\colc_i}+\max(0,t_{n+1})\act{n}(\loo[+]{n}(t_{\underline{n}}),\pro{n}),
    \\
    \gam[+]{n}(t)_{\star}   & \coloneqq\max(-t_{n+1},0)\spec[+]{n}(t_{\underline{n}})_{\star}+\left[0,\frac{t_{n+1}+1}{4}\right]\times\prod_{i=1}^n\left[\frac{\star_i-1}{2},\frac{\star_i+1}{2}\right].
  \end{align*}
  The following conditions guarantee that for all $t\in [-1,1]^{n+1}$ the cubes in the configurations $\gam[+]{n}(t)$ have pairwise disjoint interiors, and so are indeed elements of $\SCv(2,\pm^n)$:
  \begin{enumerate}
    \item In the first half of the homotopy the closed cubes stay constant and all open cubes are deformed in the $0$th coordinate so that halfway through the homotopy they are separated from the closed cubes by the hyperplane $\{\frac{1}{4}\}\times [-1,1]^n$;

    \item If $t\in\left[-\frac{1}{2},\frac{1}{2}\right]^n$ then $\spec[+]{n}(t)_{\colc_1}$ and $\spec[+]{n}(t)_{\colc_2}$ are separated by $\left\{\frac{1}{2}\right\}\times[-1,1]^n$;

    \item If $t\in\left[-\frac{1}{2},\frac{1}{2}\right]^n$ then $\act{n}(\loo[+]{n}(t),\pro{n})_{\colc_1}$ and $\act{n}(\loo[+]{n}(t),\pro{n})_{\colc_2}$ are separated by $\left\{\frac{3}{4}\right\}\times[-1,1]^n$;

    \item If $i\in\underline{n}$ and $\lvert t_i\rvert\geq \frac{1}{2}$ then $\spec[+]{n}(t)_{\colc_1}$ and $\act{n}(\loo[+]{n}(t),\pro{n})_{\colc_1}$ are in the same side of the hyperplane $[0,1]\times[-1,1]^{i-1}\times\{0\}\times [-1,1]^{n-i}$, and $\spec[+]{n}(t)_{\colc_2}$ and $\act{n}(\loo[+]{n}(t),\pro{n})_{\colc_2}$ are in the other.
  \end{enumerate}
  Thus we have that
  $$
    d^-_{n+1}\gam[+]{n}=\spec[+]{n}
    ,\qquad
    d^+_{n+1}\gam[+]{n}=\act{n}(\loo[+]{n},\pro{n}).
  $$
  By construction for all $t\in\partial[-1,1]^n\times[-1,1]$ we have $\gam[+]{n}(t)=\gam[+]{n}(-t_{\underline{n}},t_{m+1})\cdot (\colc_1\colc_2)$.
  It follows that for
  \begin{equation}\label{eq gamn}
    \gam{n} \coloneqq \gam[+]{n}-(-1)^n\gam[+]{n}\cdot (\colc_1\colc_2),
  \end{equation}we have
  \begin{equation*}
    d\gam{n}
    =\sum_{i=1}^{n+1}(-1)^{i-1}(d^+_i\gam{n}-d^-_i\gam{n})
    =(-1)^n(\act{n}(\loo{n},\pro{n})-\spec{n})
    \qedhere
  \end{equation*}
\end{proof}

\begin{lemma}\label{Lemma NonForm Dim n}
  If we are given cubical chains:
  \begin{align*}
    \pro*{S}
     & \in C_0(\SCv_{n+1}(0, {\pm^S})),
     & \act*{n}
     & \in C_0(\SCv_{n+1}(1,1)),
     & \loo*{n}
     & \in C_n(\mathcal C_{n+1}(2)), \\
    \pro-{S}
     & \in C_1(\SCv_{n+1}(0, {\pm^S})),
     & \act-{n}
     & \in C_1(\SCv_{n+1}(1,1)),
     & \loo-{n}
     & \in C_{n+1}(\mathcal C_{n+1}(2))
  \end{align*}
  for $S\subset \underline{n}$, such that:
  \begin{align*}
    \pro*{S}
     & = \pro{S}+ d  \pro-{S},
     & \act*{n}
     & = \act{n}+ d  \act-{S},
     & \loo*{n}
     & = \loo{n}+ d \loo-{n},
  \end{align*}
  then there are chains
  \begin{align*}
    \push*[\star]{S}
     & \in C_{\lvert S\rvert}(\SCv_{n+1}(1,\pm^S)),
     & \push-[\star]{S}
     & \in C_{\lvert S\rvert+1}(\SCv_{n+1}(1,\pm^S)),
     & \gam*{n}
     & \in C_{n+1}(\SCv_{n+1}(2,\pm^n))
  \end{align*}
  for each $S\subset \underline{n}$ and $\star\in \pm^S$ such that $\push*[ ]{\emptyset}=\act*{n}$, $\push-[ ]{\emptyset}=\act-{n}$ and, for $\spec*[]{n} \in C_n(\SCv_{n+1}(2,\pm^n))$ defined as in definition \ref{def spec n}, we have
  \begin{align}
    \label{boundary tilde push}
    d\push*[\star]{S}
    = & \sum_{i\in S}
    (-1)^{i}\left(
    \push*[\star_{S\setminus \{i\}}]{S\setminus \{i\}}\pr_{S\setminus\{i\}}\underset{\star'\in \pm^{S\setminus \{i\}}}{\bigcirc}\pro*{\{i\}}
    -\pro*{\{i\}}\begin{cases}
                     \circ_{-\star_i}\pro*{S\setminus\{i\}} \\
                     \circ_{\star_i}\push*[\star_{S\setminus\{i\}}]{S\setminus\{i\}}\pr_{S\setminus\{i\}}
                   \end{cases}
    \right.\nonumber \\
      &
    +\sum_{j\in S\setminus\{i\}}(-1)^j
    \Delta\left(\left(\push-[\star_{S\setminus \{i,j\}}]{S\setminus \{i,j\}}\pr_{S\setminus\{i,j\}}
      \underset{\star'\in \pm^{S\setminus \{i,j\}}}{\bigcirc}\pro-{\{j\}}
      \right)
      \underset{\star''\in \pm^{S\setminus \{i\}}}{\bigcirc}\pro-{\{i\}}
    \right.\nonumber \\
      &
      -\left(\pro-{\{j\}}\begin{cases}
                           \circ_{-\star_j}\pro-{S\setminus\{i,j\}} \\
                           \circ_{\star_j}\push-[\star_{S\setminus \{i,j\}}]{S\setminus \{i,j\}}\pr_{S\setminus\{i,j\}}
                         \end{cases}
      \right)
      \underset{\star'\in \pm^{S\setminus \{i\}}}{\bigcirc}\pro-{\{i\}}
    \nonumber \\
      & -\pro-{\{i\}}\begin{cases}
                         \circ_{-\star_i}\pro-{S\setminus\{i\}} \\
                         \circ_{\star_i}\left(\push-[\star_{S\setminus \{i,j\}}]{S\setminus \{i,j\}}\pr_{S\setminus\{i,j\}}\underset{\star'\in \pm^{S\setminus \{i,j\}}}{\bigcirc}\pro-{\{j\}}\right)
                       \end{cases}
    \nonumber \\
      & \left.\left.+
      \pro-{\{i\}}\begin{cases}
                    \circ_{-\star_i}\pro-{S\setminus\{i\}} \\
                    \circ_{\star_i}
                    \left(\pro-{\{j\}}\begin{cases}
                            \circ_{-\star_j}\pro-{S\setminus\{i,j\}} \\
                            \circ_{\star_j}
                            \push-[\star_{S\setminus\{i,j\}}]{S\setminus\{i,j\}}\pr_{S\setminus\{i,j\}}
                          \end{cases}
                    \right)
                  \end{cases}\right)
    \right) \\
    \label{eq del gamma}
    d \gam*{n}
    = & \ \spec*[]{n}-\act*{n}(\loo*{n},\pro*{n}).
  \end{align}
\end{lemma}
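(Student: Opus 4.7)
The plan is to proceed by induction on $\lvert S\rvert$, mimicking the construction in Lemma \ref{Lemma NonForm Dim 2} but keeping careful combinatorial track of the many correction terms that arise in higher dimension. The base case $S = \emptyset$ is forced by hypothesis: we must set $\push*[\;]{\emptyset} = \act*{n}$ and $\push-[\;]{\emptyset} = \act-{n}$. The compatibility with the boundary formula \eqref{boundary tilde push} is automatic for $S = \emptyset$, since the sum there is empty.

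For the inductive step, assuming $\push*[\star']{S'}$ and $\push-[\star']{S'}$ have been constructed for all proper subsets $S' \subsetneq S$ with the required boundary property, I would define
\[
\push*[\star]{S} \coloneqq \push[\star]{S} + \Delta(X_{S,\star})
\]
where $X_{S,\star}$ is the (barred-chain) correction obtained by starting from the defining expression for $d\push[\star]{S}$ (as a sum over $i \in S$ of the two composition types appearing in the first two terms of \eqref{boundary tilde push}), and replacing all occurrences of $\pro{S'}$, $\act{n}$, $\push[\star']{S'}$ by their barred counterparts $\pro-{S'}$, $\act-{n}$, $\push-[\star']{S'}$. The analogue of the ``$\pr$ plus $\Psi$'' definition from the proof of Lemma \ref{Lemma NonForm Dim 2} then yields
\[
\push-[\star]{S} \coloneqq \push*[\star]{S}\, \pr_S + \Delta(X_{S,\star})\, \Psi_{\lvert S\rvert}.
\]
One then verifies that $d\push*[\star]{S}$ equals the right-hand side of \eqref{boundary tilde push} by computing the boundary of $\Delta(X_{S,\star})$ using the Leibniz rule for the cubical Alexander--Whitney map: the ``external'' faces of $\Delta$ give back the un-corrected boundary $d\push[\star]{S}$ together with the corrective terms $\pro-{}\circ\cdots$, while the ``internal'' faces, by the inductive hypothesis applied to $\push-[\star']{S'}$, produce precisely the telescoping $\Delta$-sums displayed on the third through fifth lines of \eqref{boundary tilde push}.

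For $\gam*{n}$, I would follow the same recipe: take the $\gam{n}$ constructed in Lemma \ref{Lemma Spec is closed an homologous to loop} and add two correction terms,
\[
\gam*{n} \coloneqq (-1)^n \gam{n} + \Delta(Y_n)\cdot(1 - (-1)^n(\colc_1\colc_2)) - \Delta\bigl(\act-{n}(\loo-{n},\pro-{n})\bigr),
\]
where $Y_n$ is the ``barred'' analogue of the signed sum defining $\spec[+]{n}$ in \eqref{EtaPlus}, with every $\push[\star]{S}$, $\pro{S}$ replaced by $\push-[\star]{S}$, $\pro-{S}$. The boundary $d\gam*{n}$ splits as the original $\spec{n} - \act{n}(\loo{n},\pro{n})$ from Lemma \ref{Lemma Spec is closed an homologous to loop}, plus the external-face contributions of the two $\Delta$'s (which assemble into $\spec*[]{n} - \spec{n}$ and $-\act*{n}(\loo*{n},\pro*{n}) + \act{n}(\loo{n},\pro{n})$ respectively), plus internal-face contributions that cancel pairwise by the just-established boundary formula for $\push*[\star]{S}$.

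The main obstacle is strictly combinatorial rather than conceptual: the sheer volume of terms in \eqref{boundary tilde push} makes it delicate to check that every $\Delta$-term appearing in $d\,\Delta(X_{S,\star})$ matches a term on the right-hand side with the correct sign. The key bookkeeping device is the observation already used in the base case: each correction $\Delta(\cdot)$ introduces one ``new'' cubical direction, and its two boundary faces in that direction are, respectively, the unbarred piece (restoring $\push[\star]{S}$-style terms) and the fully-barred piece (the appropriate term of the right-hand side of \eqref{boundary tilde push}). Provided one sets up the sign convention on $\Delta$ consistently with the one used in Lemma \ref{Lemma NonForm Dim 2}, the induction carries through.
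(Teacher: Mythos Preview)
Your proposal is correct and follows essentially the same route as the paper: induction on $|S|$ with base case $\push*[\;]{\emptyset}=\act*{n}$, $\push-[\;]{\emptyset}=\act-{n}$, the recursive formula $\push*[\star]{S}=\push[\star]{S}+\Delta(X_{S,\star})$ with $X_{S,\star}$ the barred sum over $i\in S$, and $\gam*{n}=\gam{n}+\Delta(\text{barred }\spec{})-\Delta(\act-{n}(\loo-{n},\pro-{n}))$. One small slip: in your formula for $\push-[\star]{S}$ you wrote $\push*[\star]{S}\pr_S$ where the paper (and your own reference to Lemma~\ref{Lemma NonForm Dim 2}) uses the unhatted $\push[\star]{S}\pr_S$; the intended formula is clear from context.
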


\begin{proof}
  We will define recursively (by induction on the cardinality of $S$) chains
  \begin{align*}
    \push*[\star]{S}
     & \in C_{\lvert S\rvert}(\SCv_{n+1}(1,\pm^S)),
     & \push-[\star]{S}
     & \in C_{\lvert S\rvert+1}(\SCv_{n+1}(1,\pm^S)).
  \end{align*}
  For $S=\emptyset$ we simply set $\push*[]{\emptyset}\coloneqq \act*{n}$ and $\push-[]{\emptyset}\coloneqq \act-{n}$.
  If $S\subset \underline n$ is non-empty and $\star\in\pm^S$ then we set
  \begin{align*}
    \push*[\star]{S}
     & \coloneqq \push[\star]{S}+\sum_{i\in S}(-1)^{i}\Delta\left(
    \push-[\star_{S\setminus \{i\}}]{S\setminus \{i\}}\pr_{S\setminus \{i\}}\underset{\star'\in \pm^{S\setminus \{i\}}}{\bigcirc}\pro-{\{i\}}
    -\pro-{\{i\}}\begin{cases}
                   \circ_{-\star_i}\pro-{S\setminus\{i\}} \\
                   \circ_{\star_i}\push-[\star_{S\setminus\{i\}}]{S\setminus\{i\}}\pr_{S\setminus \{i\}}
                 \end{cases}\!\!\!\!\!\!
    \right) \\
    \push-[\star]{S}
     & \coloneqq
    \push[\star]{S}\pr_S+\sum_{i\in S}(-1)^{i}\Delta\left(
    \push-[\star_{S\setminus \{i\}}]{S\setminus \{i\}}\pr_{S\setminus \{i\}}\underset{\star'\in \pm^{S\setminus \{i\}}}{\bigcirc}\pro-{\{i\}}
    -\pro-{\{i\}}
    \begin{cases}
      \circ_{-\star_i}\pro-{S\setminus\{i\}}
      \\
      \circ_{\star_i}\push-[\star_{S\setminus\{i\}}]{S\setminus\{i\}}\pr_{S\setminus \{i\}}
    \end{cases}\!\!\!\!\!\!
    \right)\Psi_{\lvert S\rvert}
  \end{align*}
  such that equation \ref{boundary tilde push} holds. Setting
  \begin{align*}
    \spec-[\star,S,\emptyset]{n}
              & \coloneqq
    \Biggl(
    \push-[-\star_{\underline{n}\setminus S}]{\underline{n}\setminus S}\pr_{\underline{n}\setminus S}
    \underset{\substack{\star'\in \pm^{\underline{n}\setminus S}
    \\
        \star'\neq \star_{\underline{n}\setminus S}}}{\bigcirc}\pro-{S}
    \Biggr)
    \circ_{\star_{\underline{n}\setminus S}}\push-[\star_{S}]{S}\pr_S
    ;
    \\
    \spec-[\star,S,T]{n}
              & \underset{\mathclap{T \neq \emptyset}}{\coloneqq}
    \Bigl(
    \pro-{T}\underset{\substack{\star'\in\pm^T \\ \star'\neq \star_T,-\star_T}}{\bigcirc} \pro-{\underline{n}\setminus T}
    \Bigr)\begin{cases}
            \circ_{-\star_T}
            \push-[-\star_{\underline{n}\setminus (S\sqcup T)}]{\underline{n} \setminus (S\sqcup T)}\pr_{\underline{n} \setminus (S\sqcup T)}
            \underset{\star''\in\pm^{\underline{n}\setminus (S\sqcup T)}}{\bigcirc}\pro-{S} \\
            \circ_{\star_T}
            \push-[\star_{\underline{n}\setminus T}]{\underline{n}\setminus T}\Phi_{S,T}
          \end{cases}
    ; \\
    \spec-{n} & \coloneqq \left(\sum_{\star\in \pm^n}\sgn\star
    \sum_{\substack{S,T\subset \underline{n} \\ S\cap T=\emptyset}}
    \spec-[\star,S,T]{n}\right)\cdot(1-(-1)^n(\colc_1\colc_2)); \\
    \gam*{n}  & \coloneqq \gam{n}+\Delta(\spec-{n}) -\Delta(\act-{n}(\loo-{n},\pro-{n}))
  \end{align*}
  the equation \eqref{eq del gamma} holds.
\end{proof}

From the previous lemmas and the same argument as in the proof of Theorem~\ref{NonFormality n=1} the general non-formality result follows.
Note that to mimic the proof of Theorem~\ref{NonFormality n=1}, we need to make sense of elements of the form $x \Phi_{S, T}$ in the intermediate operad $\opQ$.
Thanks to Corollary~\ref{cor eqv cub operads}, we may assume that $\opQ$ is given by normalized chains of a relative operad in cubical $\omega$-groupoids.

\begin{theorem}
  Voronov's Swiss-Cheese operad $\SCv_{n+1}$ is not formal over any field of characteristic different from 2 for any $n\geq 1$.
\end{theorem}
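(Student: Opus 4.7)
The plan is to mimic the proof of Theorem~\ref{NonFormality n=1} in higher dimensions, using Lemma~\ref{Lemma Spec is closed an homologous to loop} and Lemma~\ref{Lemma NonForm Dim n} as the main technical inputs. First, suppose for contradiction that we have a span of quasi-isomorphisms $C_*(\SCv_{n+1}) \xleftarrow{\phi} \opQ \xrightarrow{\psi} H_*(\SCv_{n+1})$. Using Corollary~\ref{cor eqv cub operads}, I would replace $\opQ$ by (the normalized chains of) a cofibrant relative operad in cubical $\K$-linear $\omega$-groupoids, so that all the operations of the form $x\Phi_{S,T}$ and $x\Psi_k$ appearing in Definition~\ref{def spec n} and Lemma~\ref{Lemma NonForm Dim n} make sense in $\opQ$ via Convention~\ref{conv Phi}.

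Next, lift $\pro{n}$, $\act{n}$, and $\loo{n}$ to cycles $\pro{Q}$, $\act{Q}$, $\loo{Q}$ in $\opQ$, together with bounding chains $\pro-{S}$, $\act-{n}$, $\loo-{n}$ so that $\phi\pro{Q} = \pro{n} + d\pro-{S}$ (and similarly for the others). Using these data, Lemma~\ref{Lemma NonForm Dim n} produces chains $\push*[\star]{S}$, $\push-[\star]{S}$, and $\gam*{n}$ in the target complex satisfying the equations~\eqref{boundary tilde push} and~\eqref{eq del gamma}. I would then inductively build lifts $\push[\star]{Q,S} \in \opQ(1,\pm^S)_{|S|}$ for $S \subseteq \underline{n}$ by increasing $|S|$: at each stage, equation~\eqref{boundary tilde push} (applied both to the already-constructed lifts and to the corresponding images under $\phi$) shows that the appropriate signed sum of lower-dimensional compositions is a cycle whose image under $\phi$ is a boundary, allowing a lift thanks to the quasi-isomorphism property. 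With these in hand, I would form $\spec{Q} \in \opQ(2,\pm^n)_n$ following Definition~\ref{def spec n}; a direct computation parallel to the proof of Lemma~\ref{Lemma Spec is closed an homologous to loop} shows $d\spec{Q} = 0$.

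Now the crux: $H_n(\SCv_{n+1}(2,\pm^n))$ is generated by the class $[\act{n}(\loo{n},\pro{n})]$, which corresponds under $\phi_*$ to $[\act{Q}(\loo{Q},\pro{Q})]$. Hence there exist $\gam{Q} \in \opQ(2,\pm^n)_{n+1}$ and $\lambda \in \K$ with $\spec{Q} = \lambda\,\act{Q}(\loo{Q},\pro{Q}) + d\gam{Q}$. Applying $\phi$, and absorbing the discrepancy between $\phi\push[\star]{Q,S}$ and $\push*[\star]{S}$ by chains $\push-[\star]{S}$ obtained from the vanishing of $H_{|S|}(\SCv_{n+1}(1,\pm^S))$ in the relevant degree, I can package the correction into a chain $\spec-{n}$ with $d\spec-{n} = \phi\spec{Q} - \spec*[]{n}$. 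Combining this with equation~\eqref{eq del gamma} forces $\lambda = 1$. Applying $\psi$ instead: for degree reasons $\psi$ kills all the chains $\push[\star]{Q,S}$ with $|S| \geq 1$, hence $\psi\spec{Q} = 0$, while $\psi\act{Q}(\psi\loo{Q},\psi\pro{Q})$ is a nonzero multiple of $[\act{n}(\loo{n},\pro{n})]$. This forces $\lambda = 0$, a contradiction.

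The main obstacle will be the inductive lifting of the $\push[\star]{Q,S}$ and the parallel construction of the chains $\push-[\star]{S}$ needed to compare $\phi\spec{Q}$ with $\spec*[]{n}$: the combinatorics of equation~\eqref{boundary tilde push} is intricate and one must verify carefully that all the auxiliary operations (composition with $\Phi_{S,T}$, $\pr_S$, etc.) are transported coherently through $\phi$ at each inductive step. The replacement of $\opQ$ by an operad in cubical $\omega$-groupoids via Corollary~\ref{cor eqv cub operads} is precisely what makes this transport legal, and once it is justified, the remainder of the argument is a faithful generalization of the proof of Theorem~\ref{NonFormality n=1}.
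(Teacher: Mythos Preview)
Your proposal is correct and follows essentially the same approach as the paper's proof: assume a formality span, replace $\opQ$ via Corollary~\ref{cor eqv cub operads} so that the cubical operations $\Phi_{S,T}$ make sense, lift the basic classes and invoke Lemma~\ref{Lemma NonForm Dim n}, inductively construct $\push[\star]{Q}{}^{,S}$ and assemble $\spec{Q}$, then derive the contradiction $\lambda=1$ (via $\phi$ and~\eqref{eq del gamma}) versus $\lambda=0$ (via $\psi$ and degree reasons). The only minor imprecision is that you should lift a separate cycle $\pro{Q}{}^{,S}$ for each $S\subseteq\underline n$ (as required by the hypotheses of Lemma~\ref{Lemma NonForm Dim n}), not just $\pro{n}$; otherwise your outline matches the paper's argument step for step.
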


\begin{proof}
  Assume by contradiction there there is a relative dg operad $\mathcal{Q}$ and a span of quasi-isomorphisms
  $$\begin{tikzcd}
      C_\bullet(\SCv_{n+1}) & \mathcal{Q} \arrow[l, "\sim", "\phi" swap] \arrow[r, "\sim" swap, "\psi"] & H_\bullet(\SCv_{n+1})
    \end{tikzcd}
    .$$
  Since $\phi$ is a quasi-isomorphism there are cycles
  $$
    \pro{Q}{}^{,S} \in \mathcal{Q}(0, {\pm}^S)_0,
    \qquad \act{Q} \in \mathcal{Q}(1,1)_0,
    \qquad \loo{Q} \in \mathcal{Q}(2)_n
  $$
  for $S\subset \underline n$, and
  $$
    \pro-{S} \in C_1(\SCv_2(0, {\pm^S})),
    \qquad \act-{n} \in C_1(\SCv_2(1,1)),
    \qquad \loo-{n} \in C_{n+1}(\SCv_{n+1}(2))
  $$
  such that
  $$
    \phi \pro{Q}{}^{,S} = \pro{S}+ d \pro-{S}
    ,\qquad \phi \act{Q} = \act{n}+ d \act-{n}
    ,\qquad \phi \loo{Q} = \loo{n}+ d \loo-{n}.
  $$
  Let $\push*[\star]{S}$, $\push-[\star]{S}$ and $\gam*{n}$ be the chains in the conclusion of lemma \ref{Lemma NonForm Dim n}.

  Using Lemma \ref{Lemma NonForm Dim n} and the fact that $\phi$ is a quasi-isomorphism we can show there are chains $\push[\star]{Q}{}^{,S}\in\opQ(1,\pm^S)_{\lvert S\rvert}$ such that
  $$
    d\phi(\push[\star]{Q}{}^{,S})
    =d\push*[\star]{S}.
  $$

  Define $\spec{Q}\in \mathcal{Q}(2,\pm^n)_{n}$ as in definition \ref{def spec n}, which we can due to Corollary \ref{cor eqv cub operads}.  Due to equation \ref{boundary tilde push} we have, by an analogous argument as the one for Lemma \ref{Lemma Spec is closed an homologous to loop}, that
  $$
    d\spec{Q}=0.
  $$

  The group $H_{n}(\SCv_{n+1}(2,\pm^n))$ is generated by $[\act{n}(\loo{n},\pro{n})] = [\phi\act{Q}(\phi \loo{Q},\phi\pro{Q})]$, which implies that $H_n(\mathcal{Q}(2,\pm^n))$ is generated by $[\act{Q}( \loo{Q},\pro{Q})]$ and that there are
  $$
    \gam{Q}\in\mathcal{Q}(2,\pm^n)_{n+1},
    \qquad \lambda\in \K
  $$
  such that
  \begin{equation}\label{Eq NF dim n}
    \spec{Q}=\lambda \act{Q}( \loo{Q},\pro{Q})+d\gam{Q}.
  \end{equation}

  By an argument analogous to the one in the proof of theorem \ref{NonFormality n=1} we have
  \begin{align*}
    \spec*[]{n}
     & =\phi\act{Q}(\phi \loo{Q},\phi\pro{Q})+d\gam*{n},
  \end{align*}
  and since $[\phi\act{Q}(\phi \loo{Q},\phi\pro{Q})]$ generates $H_n(\SCv_{n+1}(2,\pm^n))$ we conclude that $\lambda=1$.

  As in the proof of theorem \ref{NonFormality n=1}, for degree reasons and the fact that $\psi$ is a quasi-isomorphism we can derive $\lambda=0$, a contradiction.
\end{proof}

\addsec{Table of notation}

\renewcommand{\arraystretch}{1.25}

\begin{center}
  \begin{tabular}{lllll}
    \toprule
    Notation
     & Element of
     & Definitions
    \\
     &
     & $n=1$
     & $n=2$
     & all $n$
    \\ \midrule
    $\pro[]{n}$
     & $\SCv_{n+1}(0, \pm^n)$
     & \ref{def pro1}
     & \ref{def pro2}
     & \ref{def proN}
    \\
    $\inc[S]{n}$
     & $\hom(\SCv_{|S|}, \SCv_{n+1})$
     & n/a
     & \ref{def inc2}
     & \ref{def inc}
    \\
    $\act{n}$
     & $\SCv_{n+1}(1, 1)$
     & \ref{def act1}
     & \ref{def act2}
     & \ref{def actn}
    \\
    $\push[\star]{n}$
     & $C_{n}(\SCv_{n+1}(1, \pm^n))$
     & \ref{def push1}
     & \ref{def push2}
     & \ref{def pushn}
    \\
    $\loo{n}$
     & $C_n(\C_{n+1}(2))$
     & \ref{def loo1}
     & n/a
     & \eqref{eq loon}
    \\
    $\spec[\star]{n}$
     & $C_n(\SCv_{n+1}(2, \pm^n))$
     & \ref{def spec1}
     & n/a
     & \eqref{eq specn}
    \\
    $\gam{n}$
     & $C_{n+1}(\SCv_{n+1}(2, \pm^n))$
     & \ref{lem gam1}
     & n/a
     & \eqref{eq gamn}
    \\ \bottomrule
  \end{tabular}
\end{center}

\printbibliography

\end{document}